\theoremstyle{plain}
\newtheorem{theorem}{Theorem}[section]
\newtheorem*{theorem*}{Theorem}
\newtheorem{proposition}[theorem]{Proposition}
\newtheorem{corollary}[theorem]{Corollary}
\newtheorem{lemma}[theorem]{Lemma}
\theoremstyle{definition}
\newtheorem{remark}[theorem]{Remark}
\newcommand{\enm}[1]{\ensuremath{#1}}          %
\newcommand{\op}[1]{\operatorname{#1}}
\newcommand{\cal}[1]{\mathcal{#1}}
\newcommand{\CC}{\enm{\mathbb{C}}}
\newcommand{\ZZ}{\enm{\mathbb{Z}}}
\newcommand{\PP}{\enm{\mathbb{P}}}
\newcommand{\Bb}{\enm{\cal{B}}}
\newcommand{\Ee}{\enm{\cal{E}}}
\newcommand{\Ff}{\enm{\cal{F}}}
\newcommand{\Gg}{\enm{\cal{G}}}
\newcommand{\Ii}{\enm{\cal{I}}}
\newcommand{\Nn}{\enm{\cal{N}}}
\newcommand{\Oo}{\enm{\cal{O}}}
\newcommand{\Pp}{\enm{\cal{P}}}
\newcommand{\Rr}{\enm{\cal{R}}}
\renewcommand{\phi}{\varphi}
\renewcommand{\theta}{\vartheta}
\renewcommand{\epsilon}{\varepsilon}
\newcommand{\Pic}{\op{Pic}}
\renewcommand{\to}[1][]{\xrightarrow{\ #1\ }}
\newcommand{\old}[1]{}
\begin{document}

\title[Globally generated vector bundles on a smooth quadric threefold]{Globally generated vector bundles of rank $2$\\ on a smooth quadric threefold}
\author{E. Ballico, S. Huh and F. Malaspina}
\address{Universit\`a di Trento, 38123 Povo (TN), Italy}
\email{edoardo.ballico@unitn.it}
\address{Sungkyunkwan University, Suwon 440-746, Korea}
\email{sukmoonh@skku.edu}
\address{Politecnico di Torino, Corso Duca degli Abruzzi 24, 10129 Torino, Italy}
\email{francesco.malaspina@polito.it}
\keywords{Rank two vector bundles, globally generated, smooth quadric threefold}
\thanks{The second author is supported by Basic Science Research Program 2012-0002904 through NRF funded by MEST. The third author is supported by the framework of PRIN 2010/11 "Geometria delle varieta' algebriche", cofinanced by MIUR}
\subjclass[msc2000]{Primary: {14F99}; Secondary: {14J99}}

\begin{abstract}
We investigate the existence of globally generated vector bundles of rank 2 with $c_1\leq 3$ on a smooth quadric threefold and determine their Chern classes. As an automatic consequence, every rank 2 globally generated vector bundle on $Q$ with $c_1=3$ is an odd instanton up to twist.
\end{abstract}

\maketitle
\section{Introduction}
Globally generated vector bundles on projective varieties play an important role in algebraic geometry. If they are non-trivial they must have strictly positive first Chern class.  Globally generated vector bundles on projective spaces with low first Chern class have been investigated in several papers. If $c_1(\Ee)=1$ then it is easy to see that modulo trivial summands we have only $\Oo_{\PP^n}(1)$ and $T\PP^n(-1)$.  The classification of rank $r$ globally generated vector bundles with $c_1=2$ is settled in \cite{SU}. In \cite{huh} the second author carried out the case of rank two  with $c_1=3$ on $\mathbb P^3$ and in \cite{ce} the authors continued the study until $c_1\leq 5$. This classification was extended to any rank in \cite{m} and to any $\mathbb P^n$ ($n\geq 3$) in \cite{am} and \cite{SU2}.  In \cite{e} are shown the possible Chern classes of rank two globally generated vector bundles
on  $\mathbb P^2$.

Let $Q$ be a smooth quadric threefold over an algebraically closed field of characteristic zero. The aim of this paper is to investigate the existence of globally generated vector bundles of rank $2$ on $Q$ with $c_1\leq 3$.
We use an old method of associating to a rank 2 vector bundle on $Q$ a curve in $Q$, and relate properties of the bundle to properties of the curve.
  If $\Ee$ is globally generated, he admits an exact sequence,
$$0\to \Oo_Q \to \Ee \to \Ii_C(c_1) \to 0,$$
where $C$ is a smooth  curve of degree $c_2(\Ee)$ and genus $g$:
We prove the following theorem:

\begin{theorem}\label{mt}
 There exists an indecomposable and globally generated vector bundle $\Ee$ of rank $2$ on $Q$ with the Chern classes $(c_1, c_2)$,  $c_1\leq 3$ in the following cases:

\begin{enumerate}
\item $(c_1=1, c_2=1)$, $\Ee$ is the spinor bundle $\Sigma$ and $C$ is a line.
\item  $(c_1=2, c_2=4)$, $\Ee$ is a pull-back of a null-correlation bundle on $\mathbb {P}^3$ twisted by $1$ and $C$ is the disjoint union of two conics.
    \item $(c_1=3, c_2=5)$ and $\Ee\cong\Sigma(1)$.
    \item $(c_1=3, 6\leq c_2\leq 9)$, $\Ee$ is the homology of a monad
    $$0\rightarrow \Oo_Q^{\oplus c_2-5}(1) \rightarrow \Sigma^{\oplus c_2-4}(1) \rightarrow \Oo_Q(2)^{\oplus c_2-5} \rightarrow 0,$$ and $C$ is a smooth elliptic curve of degree $c_2$.
\end{enumerate}

\end{theorem}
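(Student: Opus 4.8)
The plan is to run the classical Serre correspondence in both directions, turning bundles into curves and back. Given an indecomposable globally generated $\Ee$ of rank $2$, a general section vanishes on a smooth curve $C$ of degree $c_2$ fitting into
\[
0 \to \Oo_Q \to \Ee \to \Ii_C(c_1) \to 0,
\]
and since $\omega_Q = \Oo_Q(-3)$, adjunction gives $\omega_C \cong \Oo_C(c_1-3)$, hence $2g-2 = (c_1-3)c_2$ (with $\deg C = H\cdot[C] = c_2$, using $H^3 = 2$ and $H^2 = 2\ell$). A nontrivial globally generated bundle has $c_1 \ge 1$, and a rank-$2$ one with $c_1 = 0$ is trivial, so I may assume $c_1 \in \{1,2,3\}$. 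The engine throughout is that global generation of $\Ee$ forces global generation of $\Ii_C(c_1)$; conversely, from a suitable curve $C$ with $\omega_C \cong \Oo_C(c_1-3)$ and $\Ii_C(c_1)$ globally generated, the Cayley--Bacharach condition reconstructs a globally generated $\Ee$. The proof is then a case analysis on $c_1$, pinning down in each case which $c_2$ (equivalently, which $C$) are admissible.

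For $c_1 = 1$ the genus formula gives $g = 1-c_2$, and the binding constraint is that $\Ii_C(1)$ be globally generated: a twisted ideal sheaf $\Ii_C(1)$ is globally generated only when $C$ is cut out by hyperplanes, which on $Q$ forces $C$ to be a line, so $c_2 = 1$ and the extension is the spinor bundle $\Sigma$. For $c_1 = 2$ the formula forces $c_2$ even, and I would eliminate the extreme values: $c_2 = 2$ produces only the decomposable $\Oo_Q(1)^{\oplus 2}$, while $c_2 \ge 6$ is incompatible with global generation of $\Ii_C(2)$. The surviving value $c_2 = 4$ is realized by taking $C$ to be the preimage of a smooth conic under a $2:1$ projection $\pi \colon Q \to \PP^3$ from a point off $Q$; this preimage is a disjoint union of two conics, and $\Ee$ is then identified with $\pi^{*}N(1)$ for a null-correlation bundle $N$, whose Chern classes pull back to $(2,4)$.

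The case $c_1 = 3$ is the heart of the matter: here $2g-2 = 0$, so any connected $C$ is automatically a smooth elliptic curve, and the problem reduces to determining for which $c_2$ an elliptic curve $C \subset Q$ with $\Ii_C(3)$ globally generated exists. I would first establish $c_2 \ge 5$: an elliptic curve carrying enough cubics to be globally generated must be nondegenerate in $\PP^4$, forcing $\deg C \ge 5$, with the boundary case $c_2 = 5$ an elliptic normal quintic on $Q$ whose bundle is exactly $\Sigma(1)$ (indeed $c_1(\Sigma(1)) = 3$ and $c_2(\Sigma(1)) = 5$). For $6 \le c_2 \le 9$ I would produce $\Ee$ from such a curve via Serre and then recognize it as the cohomology of the monad in the statement: the middle term $\Sigma(1)^{\oplus c_2-4}$ is globally generated, a rank count $2(c_2-4)-(c_2-5)-(c_2-5) = 2$ gives the correct rank, and the Chern bookkeeping returns $c_1 = 3$ with the prescribed $c_2$. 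The value $c_2 = 5$ is the degenerate instance in which the outer terms vanish and the cohomology is $\Sigma(1)$, so (3) and (4) form a single family.

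The main obstacle is twofold, and both difficulties live in the $c_1 = 3$ case. First, one must prove $\Ii_C(3)$ is globally generated for the relevant elliptic curves; combined with $H^1(\Oo_Q) = 0$ this yields global generation of $\Ee$, and it is what makes the monad cohomology locally free and globally generated. Second, and most delicate, is the upper bound $c_2 \le 9$: a Riemann--Roch computation on $C$ together with the requirement that the cubics through $C$ generate $\Ii_C(3)$ at every point must be shown to fail once $c_2 \ge 10$, which simultaneously closes off all remaining Chern classes and completes the classification. Indecomposability, by contrast, is free in every listed case, since a split bundle $\Oo_Q(a)\oplus\Oo_Q(c_1-a)$ has $c_2 = 2a(c_1-a)$, which never reaches the prescribed values; and the uniform conclusion that each such $\Ee$ is, up to twist, an odd instanton is read off from the monad after normalizing $c_1$ to $-1$.
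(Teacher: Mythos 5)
Your skeleton is the same as the paper's — Serre correspondence in both directions, adjunction giving $\omega_C\cong\Oo_C(c_1-3)$, and a case split over $c_1\in\{1,2,3\}$ — but the two steps you explicitly defer are precisely the theorem's content, so the proposal has genuine gaps. First, the bound $c_2\le 9$: you say that global generation of $\Ii_C(3)$ ``must be shown to fail once $c_2\ge 10$,'' but you give no mechanism, and Riemann--Roch on $C$ alone cannot deliver it ($h^0(\Ii_C(3))$ remains positive well beyond degree $9$; what fails is spannedness along a residual curve). The paper's argument is liaison: two cubic sections through $C$ cut out a complete intersection $X=C+Y$, the sequence $0\to \Ii_X(c_1)\to \Ii_C(c_1)\to \omega_Y(3-c_1)\to 0$ forces $\omega_Y(3-c_1)$ to be globally generated, hence of nonnegative degree, and the liaison formulas $d'=2c_1^2-d$, $g'-g=\frac{1}{2}(d'-d)(2c_1-3)$ then yield $c_2\le \frac{2}{3}(2c_1+1)(c_1-1)=9$. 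Second, existence for $c_2=8,9$: producing ``$\Ee$ from such a curve via Serre'' presupposes exactly what is hardest, namely a smooth elliptic curve of degree $8$ or $9$ on $Q$ with $\Ii_C(3)$ spanned. The paper devotes two sections to this: for $c_2=8$ it builds a nodal curve $Y=E\cup D_1\cup D_2$ from a degree-$6$ elliptic curve on a quartic Del Pezzo surface $U\subset Q$, proves $\Ii_Y(3)$ is spanned and $h^1(\Ii_Y(3))=0$ by explicit computations on $U$ and on a hyperplane quadric, checks $h^1(\Nn_Y)=0$ so that $Y$ smooths inside $Q$ by Hartshorne--Hirschowitz, and concludes by semicontinuity and openness of spannedness; $c_2=9$ is analogous starting from a degree-$7$ elliptic curve with $h^0(\Ii_A(2))=0$ (Lemma \ref{b1}), plus a further liaison argument showing $\omega$ of the residual curve is trivial. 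None of this construction is recoverable from your sketch.

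Two smaller but real defects: the monad description in item (4) is not established by your rank count and Chern bookkeeping — consistency of invariants does not show $\Ee$ \emph{is} the cohomology of such a monad; the paper proves it (Proposition \ref{monad}) by killing intermediate cohomology and identifying the resulting ACM bundle as $\Oo_Q^{\oplus 2}\oplus \Sigma^{\oplus 4}$ via Ottaviani's classification, and for $c_2=9$ it first needs a spectrum argument to force $h^1(\Ee(-3))=0$, i.e., to rule out a disconnected $C=C_1\sqcup C_2$ — a case your remark ``any connected $C$ is automatically elliptic'' silently skips (the paper excludes the $4+5$ splitting by a separate lemma, and degenerate $C$ via the unstable case $h^0(\Ee(-2))\neq 0$, where the quintic and sextic elliptic curves on quadric surfaces are eliminated by bidegree computations rather than by a bare nondegeneracy claim). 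Likewise, for $c_1=2$ your assertion that $c_2\ge 6$ is ``incompatible with global generation of $\Ii_C(2)$'' is the statement of the paper's Lemma \ref{u3}, which requires an actual argument (a hyperplane through one conic meets the rest in a scheme of degree $\ge 3$ on which $\Ii_Z(1,1)$ would have to be spanned). So while your road map is the right one, the proposal as written proves the easy cases only and postulates the bound, the $c_2=8,9$ existence, and the monad structure.
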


A typical way to construct a vector bundle on $Q$ is by restriction of a vector bundle on $\PP
^4$ or by a pull-back of a vector bundle on $\PP^3$ along a linear projection from $Q$ to $\PP^3$. The spinor bundle $\Sigma$ is not obtained by either one of these ways and it plays an important role in describing the globally generated vector bundle on $Q$.
In fact, from the classification we observe that every rank two globally generated vector bundle on $Q$ with $c_1=3$ is, up to twist, an odd instanton that is the cohomology of a monad involving $\Sigma$ (see \cite{faenzi}).

In Sect.2 we set up basic computations and deal with the case $c_1=1$ as a preliminary case. In Sect.3 we prove that every indecomposable and globally generated vector bundle of rank 2 on $Q$ with $c_1=2$ is a pull-back of a null-correlation bundle on $\PP^3$ twisted by $1$. In Sect.4, 5 and 6, we deal with the case of $c_1=3$. First we determine the possible second Chern class $c_2$ using the Liaison theory, that is $5\leq c_2 \leq 9$. In each case we prove the existence of globally generated vector bundle of rank $2$. In Sect.4 we explain the case of $c_2=5,6,7$ based on the result \cite{OS} about the moduli spaces of rank $2$ vector bundles on $Q$. The critical part of this paper is on the existence of globally generated vector bundles of rank $2$ with $c_2=8$ and $9$. It is equivalent to the existence of a smooth elliptic curve $C$ of degree $c_2$ whose ideal sheaf twisted by $3$ is globally generated. The main ingredient is the result in \cite{hh} with replacing $\PP^3$ by $Q$, by which we can deform a nodal reducible curve of degree $c_2$ constructed in a suitable way to a smooth elliptic curve that we need.
In order to have a complete classification we need an answer to the following question:\\
{\bf Question:} Are the moduli spaces $\mathfrak{M}(3,8)$ and $\mathfrak{M}(3,9)$  irreducible?\\
In the last section we classify rank two globally generated bundles on higher dimensional quadrics.\\

The second author would like to thank Politecnico di Torino, especially the third author for warm hospitality.

\section{Preliminaries}
Let $Q$ be a smooth quadric hypersurface in $\PP^4$. Then we have
$$\Pic (Q)=H^2(Q, \ZZ)=\ZZ h,$$
where $h$ is the class of a hyperplane section. Moreover, the cohomology ring $H^*(Q, \ZZ)$ is generated by $h$, a line $l\in H^4(Q, \ZZ)$ and a point $p\in H^6(Q, \ZZ)$ with the relations: $h^2=2l~,~ h\cdot l=p~,~ h^3=2p$.

Let $\Ee$ be a coherent sheaf of rank $r$ on $Q$. Then we have \cite{OS}:
\begin{align*}
c_1(\Ee(k))&=c_1+kr\\
c_2(\Ee(k))&=c_2+2k(r-1)c_1+2k^2{r\choose 2}\\
c_3(\Ee(k))&=c_3+k(r-2)c_2+2k^2{r-1\choose 2}+2k^3{r\choose 3} \\
\chi(\Ee)~~&=(2c_1^3-3c_1c_2+3c_3)/6+3(c_1^2-c_2)/2+13c_1/6+r,
\end{align*}
where $(c_1, c_2, c_3)$ is the Chern classes of $\Ee$. In particular, when $\Ee$ is a vector bundle of rank 2 with $c_1=-1$, we have
$$\chi(\Ee)=1-c_2~~,~~ \chi(\Ee(1))=6-2c_2~~,~~ \chi(\Ee(-1))=0,$$
$$\chi(\mathcal{E}nd(\Ee))=7-6c_2.$$

Let $\mathfrak{M}(c_1, c_2)$ be the moduli space of stable vector bundles of rank 2 on $Q$ with the Chern classes $(c_1, c_2)$.

\begin{proposition}\cite{sierra}\label{prop1}
Let $\Ee$ be a globally generated vector bundle of rank $r$ on $Q$ such that $H^0(\Ee(-c_1))\not= 0$, where $c_1$ is the first Chern class of $\Ee$. Then we have
$$\Ee\simeq \Oo_Q^{\oplus (r-1)}\oplus \Oo_Q(c_1).$$
\end{proposition}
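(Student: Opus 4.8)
The plan is to upgrade the given section into a locally split inclusion $\Oo_Q(c_1)\into\Ee$ whose quotient is trivial, and then to split the resulting sequence; global generation enters precisely in forcing the inclusion to be a subbundle.

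First I would read the hypothesis as a nonzero homomorphism. Since $\det\Ee=\Oo_Q(c_1)$ we have $\Ee(-c_1)=\Ee\otimes(\det\Ee)^{-1}$, so a nonzero $s\in H^0(\Ee(-c_1))$ is the same as a nonzero map $\sigma\colon\Oo_Q(c_1)\to\Ee$. Dualizing $\sigma$, twisting by $\Oo_Q(c_1)$, and using the canonical identification $\Ee^\vee\otimes\det\Ee\cong\wedge^{r-1}\Ee$, I obtain a nonzero map $\phi\colon\wedge^{r-1}\Ee\cong\Ee^\vee(c_1)\to\Oo_Q$. The point of passing to $\phi$ is that $\wedge^{r-1}\Ee$ is again globally generated (an exterior power of a globally generated bundle is globally generated), so $\Image(\phi)$ is a nonzero globally generated subsheaf of $\Oo_Q$.

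The hard part, and the only place global generation is essential, is to show that $\phi$ is surjective. Writing $\Ii=\Image(\phi)\subseteq\Oo_Q$, the inclusion gives $H^0(\Ii)\into H^0(\Oo_Q)$, a one-dimensional space, so $h^0(\Ii)\le 1$; since $\Ii\neq 0$ is globally generated it has a nonzero section, whence $h^0(\Ii)=1$ and the constant section $1$ already lies in $H^0(\Ii)$. Then $1\in\Ii_x$ for every $x\in Q$, so $\Ii=\Oo_Q$ and $\phi$ is surjective. This is exactly the step that rules out $\Ii$ being a proper ideal sheaf; without global generation it would fail. Surjectivity of $\phi$ is equivalent to surjectivity of $\sigma^\vee\colon\Ee^\vee\to\Oo_Q(-c_1)$, whose kernel $\Kk$ is therefore a vector bundle of rank $r-1$. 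Dualizing the short exact sequence $0\to\Kk\to\Ee^\vee\to\Oo_Q(-c_1)\to0$ yields
$$0\to\Oo_Q(c_1)\to\Ee\to\Qq\to0,$$
with $\Qq:=\Kk^\vee$ locally free of rank $r-1$ and $\det\Qq=\Oo_Q$.

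It then remains to identify $\Qq$ and to split the sequence, both of which are formal. The bundle $\Qq$ is globally generated as a quotient of $\Ee$, locally free of rank $n=r-1$, with $\det\Qq=\Oo_Q$; choosing $n$ sections whose values span the fibre at the generic point gives $\alpha\colon\Oo_Q^{\oplus n}\to\Qq$ with $\det\alpha\in H^0(\det\Qq)=H^0(\Oo_Q)$ a nonzero constant, hence nowhere vanishing, so $\alpha$ is an isomorphism and $\Qq\cong\Oo_Q^{\oplus n}$. Finally the extension $0\to\Oo_Q(c_1)\to\Ee\to\Oo_Q^{\oplus(r-1)}\to0$ is classified by an element of $\Ext^1(\Oo_Q^{\oplus(r-1)},\Oo_Q(c_1))=H^1(Q,\Oo_Q(c_1))^{\oplus(r-1)}$, which vanishes because $Q$ is arithmetically Cohen--Macaulay, so $H^1(\Oo_Q(k))=0$ for all $k$. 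Therefore the sequence splits and $\Ee\cong\Oo_Q^{\oplus(r-1)}\oplus\Oo_Q(c_1)$, as claimed.
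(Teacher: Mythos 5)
Your proof is correct, but be aware that the paper offers no argument of its own to compare against: Proposition \ref{prop1} is quoted from \cite{sierra}, so what you have written is a self-contained reconstruction of Sierra's result rather than a variant of anything proved in the text. Every step checks out: $H^0(\Ee(-c_1))=\Hom(\Oo_Q(c_1),\Ee)$ since $\Pic(Q)=\ZZ h$; $\wedge^{r-1}\Ee\cong\Ee^\vee(c_1)$ is globally generated as a quotient of $\wedge^{r-1}\Oo_Q^{\oplus N}$; your ideal-sheaf argument (a nonzero globally generated subsheaf of $\Oo_Q$ has a nonzero constant section, hence equals $\Oo_Q$, because $h^0(\Oo_Q)=1$) is exactly the right way to make global generation force $\sigma$ to be a subbundle inclusion; the kernel $\Kk$ of the resulting surjection $\Ee^\vee\to\Oo_Q(-c_1)$ is locally free, dualizing is harmless since $\mathcal{E}xt^1(\Oo_Q(-c_1),\Oo_Q)=0$; and the determinant trick correctly identifies the globally generated, determinant-trivial quotient $\Qq$ with $\Oo_Q^{\oplus(r-1)}$ (one small framing quibble: global generation enters three times --- surjectivity of $\phi$, triviality of $\Qq$, and implicitly the splitting --- not only in the subbundle step as your opening sentence suggests). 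The one place you use geometry specific to $Q$ is the final splitting via $\Ext^1(\Oo_Q^{\oplus(r-1)},\Oo_Q(c_1))=H^1(\Oo_Q(c_1))^{\oplus(r-1)}=0$, which is valid here ($Q$ is ACM) but is the reason your argument is slightly less general than Sierra's, which holds on an arbitrary projective variety: there one splits the sequence with no cohomology vanishing at all, by noting that since $\Qq\cong\Oo_Q^{\oplus(r-1)}$, any sheaf map $\Oo_Q^{\oplus N}\to\Qq$ is given by a constant matrix, so surjectivity of the composite $H^0(\Ee)\otimes\Oo_Q\to\Ee\to\Qq$ (global generation again) forces $H^0(\Ee)\to H^0(\Qq)$ to be surjective, and lifting the standard basis of $H^0(\Oo_Q^{\oplus(r-1)})$ produces a right inverse of $\Ee\to\Qq$ directly. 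If you replace your $H^1$ step by that observation, your proof becomes a complete proof of Sierra's theorem on any projective variety, with the quadric playing no special role.
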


 Now assume that $\Ee$ is a globally generated vector bundle of rank 2 on $Q$. Then $\Ee$ admits an exact sequence
\begin{equation}\label{eqa1}
0\rightarrow \Oo_Q \rightarrow \Ee \rightarrow \Ii_C (c_1) \rightarrow 0,
\end{equation}

where $C$ is a smooth curve on $Q$. Notice that $\omega_C \simeq \Oo_C(-3+c_1)$ and $c_2(\Ee)=\deg (C)$.

If $l$ is a line on $Q$, then $\Ee|_l$ is also globally generated, which means in particular that $c_1(\Ee)\geq 0$.

 \begin{remark}\label{u0}
$\Ee \cong \Oo _Q(c_1) \oplus \Oo _Q$ if and only if $C =\emptyset$.
\end{remark}

We briefly recall Ottaviani's construction of the spinor
bundle (\cite{O1}). Let $G(2,4)$ denote the Grassmannian of all $2$-dimensional linear
subspaces of
$\mathbb {K}^4$. By
using the geometry of the variety of all $1$-dimensional linear subspaces of $Q$ it is possible to construct a morphism
$s: Q \to G(2,4)$.  Set $\Sigma (-1):= s^\ast U$ where $U$ is the universal quotient bundle of $G(2,4)$.
 $\Sigma$ is called spinor bundle on $Q$, he  is a globally generated vector bundle of rank $2$ with the Chern classes $(c_1, c_2)=(1,1)$. Moreover he admits the following canonical exact sequence
$$0\to \Sigma^\vee \to \Oo_Q^{\oplus 4} \to \Sigma \to 0.$$
For a general section $s\in H^0(\Sigma)$, we have an exact sequence
$$0\to \Oo_Q \stackrel{s}{\to} \Sigma \to \Ii_l (1) \to 0$$
where $l$ is a line on $Q$. It gives us an identification between $\PP H^0(\Sigma)\cong \PP^3$ and a family of the lines on $Q$.

\begin{proposition}\label{u1}
We have $c_1(\Ee) =1$ if and only if either $\Ee \cong \Oo_Q(1)\oplus \Oo _Q$ or $\Ee$ is the spinor bundle.
\end{proposition}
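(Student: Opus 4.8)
The \emph{if} direction is immediate, since $\Oo_Q(1)\oplus\Oo_Q$ visibly has $c_1=1$ and the spinor bundle $\Sigma$ has Chern classes $(1,1)$ by construction. So the plan is to establish the \emph{only if} direction: given a globally generated rank $2$ bundle $\Ee$ with $c_1(\Ee)=1$, I would run it through the standard sequence \eqref{eqa1}, namely $0\to\Oo_Q\to\Ee\to\Ii_C(1)\to 0$ with $C$ a smooth curve satisfying $\omega_C\cong\Oo_C(-2)$. If $C=\emptyset$, Remark \ref{u0} finishes the job with $\Ee\cong\Oo_Q(1)\oplus\Oo_Q$, so the substance is the case $C\neq\emptyset$. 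My first step would be to pin down $C$ numerically: on each connected component $C_i$ of degree $d_i\geq 1$ and genus $g_i$, adjunction gives $2g_i-2=\deg\Oo_{C_i}(-2)=-2d_i$, hence $g_i=1-d_i$, and $g_i\geq 0$ forces $d_i=1$, $g_i=0$. Thus $C$ is a disjoint union of lines.

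The second step is to show there is exactly one line. As a quotient of the globally generated bundle $\Ee$, the sheaf $\Ii_C(1)$ is globally generated, and since $\Oo_Q(1)$ is invertible this means the linear forms vanishing on $C$ generate the ideal $\Ii_{C/Q}$ at every point of $C$. At a point $p$ of a component $l_1$ the stalk $\Ii_{C/Q,p}$ needs two generators (codimension two), so I would need two such forms whose differentials span the $2$-dimensional conormal of $l_1$ in $Q$. But if $C$ contained a second line $l_2$, every linear form vanishing on $l_1\cup l_2$ vanishes on the linear span $\langle l_1,l_2\rangle=\PP^3$, so these forms fill only a $1$-dimensional space and cannot surject onto the $2$-dimensional conormal; global generation would fail at $p$. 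Hence $C=l$ is a single line.

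The final step is to identify $\Ee$ with $\Sigma$. Here I would compute the space of extensions $\Ext^1(\Ii_l(1),\Oo_Q)\cong\Ext^1(\Ii_l,\Oo_Q(-1))$. Feeding $0\to\Ii_l\to\Oo_Q\to\Oo_l\to 0$ into $\Ext^\bullet(-,\Oo_Q(-1))$ and using $H^i(\Oo_Q(-1))=0$ gives $\Ext^1(\Ii_l,\Oo_Q(-1))\cong\Ext^2(\Oo_l,\Oo_Q(-1))$, which by Serre duality on $Q$ (with $\omega_Q=\Oo_Q(-3)$) is dual to $H^1(\Oo_l(-2))=H^1(\PP^1,\Oo(-2))\cong\CC$. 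Since this $\Ext^1$ is one-dimensional, there is a unique nonsplit extension up to isomorphism; the sheaf $\Ii_l(1)$ is not locally free, so the vector bundle $\Ee$ must correspond to a nonzero class, and the canonical spinor sequence $0\to\Oo_Q\to\Sigma\to\Ii_l(1)\to 0$ realizes a nonzero class for this same line $l$. Therefore $\Ee\cong\Sigma$.

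I expect the main obstacle to be the last step: the honest content is the one-dimensionality of $\Ext^1(\Ii_l(1),\Oo_Q)$, which forces the extension realizing $\Ee$ to coincide with the one defining $\Sigma$. The line-counting in the middle step is conceptually clean but also delicate, since it is exactly where global generation (rather than mere existence of the sequence) is used to exclude several disjoint lines.
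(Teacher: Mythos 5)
Your proof is correct, but it takes a genuinely different route from the paper's. The paper does not pin down $C$ via adjunction at all: it observes that a smooth curve with $\Ii_C(1)$ globally generated lies in a codimension $2$ linear section of $Q$, hence is a line or a smooth conic, hence ACM; then the sequence (\ref{eqa1}) makes $\Ee$ an ACM bundle, and Ottaviani's Horrocks-type criterion for quadrics (Theorem 3.5 of \cite{O}) says $\Ee$ is either decomposable or a twist of the spinor bundle, after which global generation and $c_1=1$ leave only $\Oo_Q(1)\oplus\Oo_Q$ and $\Sigma$. You instead use $\omega_C\cong\Oo_C(-2)$ to force $C$ to be a disjoint union of lines, exclude a second line by the span argument (which is sound: sections of $\Ii_C(1)$ are exactly the linear forms on $\PP^4$ vanishing on $C$, two skew lines span a $\PP^3$ so these forms are at most a $1$-dimensional space, and they cannot surject onto the $2$-dimensional fiber $\Ii_C(1)\otimes k(p)$ at $p\in C$), and then identify $\Ee$ by the dimension count $\Ext^1(\Ii_l(1),\Oo_Q)\cong\CC$, which checks out: $H^i(\Oo_Q(-1))=0$ for all $i$, and Serre duality with $\omega_Q=\Oo_Q(-3)$ gives $\Ext^2(\Oo_l,\Oo_Q(-1))\cong H^1(\Oo_l(-2))^\vee\cong\CC$; nonsplitness of the extension defining $\Ee$ follows since $\Ii_l(1)$ is not locally free. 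Your approach buys a self-contained argument that avoids the classification of ACM bundles on quadrics; the paper's buys brevity, indifference to whether $C$ is a line or a conic, and — the one point you elide — independence from knowing which lines arise from $\Sigma$. Your last step tacitly assumes that the \emph{specific} line $l$ in your sequence is the zero scheme of a section of $\Sigma$; this is true, and is essentially recorded in the paper's preliminaries (the identification of $\PP H^0(\Sigma)\cong\PP^3$ with the family of lines on $Q$; alternatively, $\op{Spin}(5)$ acts transitively on lines and $\Sigma$ is homogeneous), but without citing it the one-dimensionality of $\Ext^1$ only shows that $\Ee$ is \emph{the} nonsplit extension for $l$, not yet that this extension is $\Sigma$.
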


\begin{proof} Both $\Oo_Q(1)\oplus \Oo _Q$ and the spinor bundle are globally generated and have the prescribed Chern classes. Hence it is sufficient to consider the case when $C \ne \emptyset$ by the remark \ref{u0}. Since $C$ is smooth and $\mathcal {I}_C(1)$ is globally generated, it is contained in a codimension
$2$ linear section of $Q$. Hence $C$ is either a smooth conic or a line. In both cases
$C$ is ACM. From the sequence (\ref{eqa1}) we get that $\Ee$ is an ACM vector bundle. Hence $\Ee$ is either decomposable or a twist of the spinor bundle by Theorem 3.5 in \cite{O}. Since $\Ee$ is globally generated and $c_1(\Ee)=1$, we get
that either $\Ee \cong \Oo_Q(1)\oplus \Oo _Q$ or $\Ee$ is the spinor bundle.
\end{proof}

\section{Rank two globally generated bundles with $c_1=2$}
In this section we prove the following result.
\begin{proposition}\label{u2}
Let $\Ee$ be a rank $2$ vector bundle with $c_1(\Ee) =2$. $\Ee$ is globally generated
if and only if $\Ee$ is either isomorphic to
\begin{enumerate}
\item $\Oo_Q(a)\oplus  \Oo_Q(2-a)$ with $a=0,1$ or
\item  a pull-back of a null-correlation bundle on $\mathbb {P}^3$ twisted by $1$
\end{enumerate}
\end{proposition}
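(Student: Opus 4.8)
The plan is to prove the two implications separately. The forward direction is routine, and in the converse the only real content is the indecomposable case, which I will reduce to a statement about stable bundles with vanishing first Chern class.

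For the ``if'' direction, each bundle in (1) is a direct sum of the globally generated line bundles $\Oo_Q(a)$, $a\ge 0$, hence is globally generated. For (2), the null-correlation bundle $N$ on $\PP^3$ has $N(1)$ globally generated; writing $\pi\colon Q\to\PP^3$ for the linear projection one has $\pi^\ast\Oo_{\PP^3}(1)=\Oo_Q(1)$, and since pull-back preserves global generation, $\pi^\ast(N(1))$ is globally generated of rank $2$ with $c_1=2$.

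For the converse, first suppose $H^0(\Ee(-2))\neq 0$. Then Proposition \ref{prop1} forces $\Ee\cong\Oo_Q\oplus\Oo_Q(2)$, case (1). So assume $H^0(\Ee(-2))=0$; then $\Ee\not\cong\Oo_Q\oplus\Oo_Q(2)$, so by Remark \ref{u0} the curve $C$ in (\ref{eqa1}) is nonempty, and since $c_1=2$ we have $\omega_C\cong\Oo_C(-1)$. On each connected component $C_i$, writing $g_i$ and $\deg C_i$ for its genus and degree, this reads $2g_i-2=-\deg C_i$, whence $g_i=0$ and $\deg C_i=2$: thus $C$ is a disjoint union of $m\ge 1$ conics with $c_2(\Ee)=2m$. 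If $m=1$ then $C=P\cap Q$ for a plane $P$, so $\Ii_C$ is generated by the two linear forms defining $P$; the corresponding section of $\Oo_Q(1)^{\oplus 2}$ vanishes exactly along $C$, and since $\op{Ext}^1(\Ii_C(2),\Oo_Q)$ is one-dimensional (by Serre duality it equals $H^1(\Oo_C(-1))^\vee$, one-dimensional for a single conic), this identifies $\Ee\cong\Oo_Q(1)^{\oplus 2}$, again case (1). Hence if $\Ee$ is indecomposable then $m\ge 2$.

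It remains to treat the indecomposable case, where the main difficulty lies. I would set $F:=\Ee(-1)$, of rank $2$ with $c_1=0$ and $c_2=2m-2$. The assumption $H^0(\Ee(-2))=H^0(F(-1))=0$ makes $F$ semistable. I would next show $F$ is stable — equivalently $H^0(F)=0$, which by the twisted sequence $0\to\Oo_Q(-1)\to F\to\Ii_C(1)\to 0$ amounts to the conics not lying in a common hyperplane — and then classify the stable rank $2$ bundles $F$ on $Q$ with $c_1=0$ for which $F(1)$ is globally generated. The goal is to prove that these have $c_2=2$ (so $m=2$ and $c_2(\Ee)=4$) and are precisely the pull-backs $\pi^\ast N$ of null-correlation bundles. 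I would obtain this by descent along the double cover $\pi\colon Q\to\PP^3$ with deck involution $\sigma$: once $\sigma^\ast F\cong F$ is established compatibly, $F$ descends to a stable rank $2$ bundle on $\PP^3$ with $(c_1,c_2)=(0,1)$, necessarily the null-correlation bundle, so that $\Ee=F(1)\cong\pi^\ast(N(1))$. Producing this $\sigma$-linearization — equivalently, showing that the moduli point of $F$ is $\sigma$-fixed — together with pinning down $c_2(F)=2$, is the step I expect to be the main obstacle; alternatively both can be extracted from the analysis of the moduli spaces of rank $2$ bundles on $Q$ in \cite{OS}.
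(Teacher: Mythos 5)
Your ``if'' direction, the unstable case via Proposition \ref{prop1}, and the single-conic case are all correct; in fact your treatment of one conic (observing $C=P\cap Q$ is a complete intersection of two hyperplane sections and that $\Ext^1(\Ii_C(2),\Oo_Q)\cong H^1(\Oo_C(-1))^\vee$ is one-dimensional, forcing $\Ee\cong\Oo_Q(1)^{\oplus 2}$) is a clean elementary alternative to the paper's route, which instead notes that a conic is ACM, deduces from (\ref{eqa1}) that $\Ee$ is ACM, and invokes Ottaviani's classification of ACM bundles on $Q$. The problem is everything after that: for $m\ge 2$, which is the real content of the proposition, you offer a plan rather than a proof, and both of its pillars are exactly the points you flag as ``the main obstacle'' or defer to \cite{OS} without a specific statement.

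Concretely, two claims are missing. First, the bound $m\le 2$ (equivalently $c_2(\Ee)\le 4$): this is precisely where global generation must be used, and the paper proves it directly in Lemma \ref{u3}: take a general hyperplane $H$ containing one conic $E_1$; if there were any residual curve, then $Z:=H\cap (C\setminus E_1)$ would be a zero-dimensional scheme of degree $\ge 3$ disjoint from $E_1$ inside the quadric surface $Q'=H\cap Q$, and global generation of $\Ii_C(2)$ would force $\Ii_Z(1,1)$ to be globally generated on $Q'$, which is absurd. Nothing in your proposal substitutes for this step. Second, the identification of $\Ee$ as a pull-back when $m=2$: your descent strategy is not well-posed as stated, because there is no distinguished double cover $Q\to \PP^3$ --- the projection depends on a choice of center $P\notin Q$, and $\sigma^\ast F\cong F$ will fail for a generic choice of $P$. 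The center is forced by the geometry: the planes $M_1,M_2$ spanned by the two disjoint conics meet in a single point $P$, which lies off $Q$ exactly because $C_1\cap C_2=\emptyset$, and one must project from that $P$. This is how the paper proceeds, at which point it cites Prop.\ 3.2 of \cite{SW}, which says that every extension $0\to \Oo_Q\to \Ee\to \Ii_{C_1\sqcup C_2}(2)\to 0$ is pulled back from an extension $0\to \Oo_{\PP^3}\to \Ff\to \Ii_{L_1\sqcup L_2}(2)\to 0$ with $\Ff$ a twisted null-correlation bundle; that reference resolves your linearization problem by working with the extensions themselves rather than with an abstract $\sigma$-invariance of $F$. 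Without either that citation or an argument pinned to the center $P=M_1\cap M_2$, your proof does not close.
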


\begin{lemma}\label{u3}
Let $C = E_1\sqcup E_2\sqcup J \subset Q$ be a curve with $E_1$ and $E_2$ smooth conics and $J\ne \emptyset$. Then $\Ii_C(2)$ is not globally generated.
\end{lemma}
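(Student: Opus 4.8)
The plan is to prove that $\Ii_C(2)$ is not globally generated by producing a point of $Q\setminus C$ at which every section of $\Ii_C(2)$ vanishes, since a globally generated sheaf can have no base point off its support. First I would study the larger system $V:=H^0(\Ii_{E_1\sqcup E_2}(2))$, because $H^0(\Ii_C(2))\subseteq V$. Each conic spans a plane $\Pi_i=\langle E_i\rangle$ with $Q\cap\Pi_i=E_i$; the two planes necessarily meet, generically in a single point, which lies off $Q$ since $E_1\cap E_2=\emptyset$. A direct computation in coordinates adapted to $\Pi_1,\Pi_2$ then shows that the quadrics of $\PP^4$ through $E_1\sqcup E_2$ form a five-dimensional space whose base locus is exactly $E_1\cup E_2$; modulo the equation of $Q$ this gives $\dim V=4$ and a rational map $\psi\colon Q\to\PP^3$, defined away from $E_1\cup E_2$, whose coordinate functions are these quadrics. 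In particular every $F\in V$ is a pullback $\psi^{*}\ell$ of a hyperplane $\ell$. (Should the coordinate computation instead reveal a base locus strictly larger than $E_1\cup E_2$, the smaller system $\Ii_C(2)$ inherits it and we are done, so I assume the base locus is $E_1\cup E_2$.)

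The crux is that $\psi$ is a double cover of $\PP^3$; let $\iota$ be the associated involution of $Q$. Because every $F\in V$ is a pullback, $F$ takes equal values, and in particular vanishes simultaneously, at $P$ and $\iota(P)$. Now pick $y\in J$. Every section of $\Ii_C(2)$ lies in $V$ and vanishes at $y$, hence also at $\iota(y)$; if $\iota(y)\notin C$ then $\iota(y)\in Q\setminus C$ is a common zero of all sections of $\Ii_C(2)$, and $\Ii_C(2)$ is not globally generated. This easy case already explains why two disjoint conics are the extremal configuration: for the pair $E_1,E_2$ the involution interchanges them, so their union is $\iota$-stable and no extra base point appears, whereas a third, generic, component is carried off $C$.

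It remains to treat the degenerate case $\iota(J)\subseteq C$, i.e.\ $\iota(J)=J$ (using that $\iota$ preserves $E_1\cup E_2$, from which $J$ is disjoint). Then $\bar J:=\psi(J)$ is a curve, every section of $\Ii_C(2)$ is a pullback $\psi^{*}\ell$ with $\ell$ a hyperplane containing $\bar J$, and such $\ell$ form at most a pencil; thus $h^0(\Ii_C(2))\le 2$ and all sections vanish on $\psi^{-1}(\langle\bar J\rangle)$. A residual-degree count, viewing this preimage as the residual of the base conics in the intersection of $Q$ with two members of $V$, shows it is strictly larger than $C$, so again there is a base point off $C$. The two technical hearts of the argument are the verification that $\deg\psi=2$ (the source of the involution $\iota$, and the reason disjoint conics on $Q$ cannot exceed two) and this residual computation in the degenerate case; everything else is formal.
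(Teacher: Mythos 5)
You correctly compute that $\dim V=4$ and that the base locus of $V=H^0(\Ii_{E_1\sqcup E_2}(2))$ is exactly $E_1\cup E_2$, but the claim you yourself flag as a technical heart of the argument --- that $\psi\colon Q\dashrightarrow\PP^3$ is a double cover, i.e.\ dominant and generically finite of degree $2$ --- is false, and both branches of your proof (the generic case via the involution $\iota$, and the degenerate case $\iota(J)=J$) are built on it. Take coordinates with $\Pi_1=\{x_3=x_4=0\}$ and $\Pi_2=\{x_0=x_1=0\}$, so $P=\Pi_1\cap\Pi_2=[0:0:1:0:0]$. The quadrics of $\PP^4$ through $E_1\sqcup E_2$ are spanned by $x_0x_3,\,x_0x_4,\,x_1x_3,\,x_1x_4$ together with one further quadric $F_0$ (the one restricting to the equations of $E_i$ on the two planes). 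Modulo the equation $G$ of $Q$ the quadric $F_0$ becomes dependent on the four monomials, while the four monomials stay independent: a relation $\sum a_{ij}x_ix_j=\lambda G$ with $\lambda\neq 0$ would exhibit the smooth quadric $Q$ as a quadric of rank at most $4$, since the left-hand side contains two planes. Hence $V$ is spanned by the restrictions of the four monomials, and $\psi$ is the map $x\mapsto([x_0:x_1],[x_3:x_4])$ followed by the Segre embedding of $\PP^1\times\PP^1$. Its image is the quadric surface $\{z_0z_3=z_1z_2\}\subset\PP^3$, not $\PP^3$, and its fibres are one-dimensional: the fibre through a point $x\notin E_1\cup E_2$ is the plane conic $Q\cap H_1\cap H_2$, where $H_1$ is the hyperplane spanned by $\Pi_2$ and $x$, and $H_2$ the hyperplane spanned by $\Pi_1$ and $x$. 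So there is no involution, and the proof as written does not get off the ground.

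The good news is that the actual geometry repairs your strategy and makes it simpler. Every $s\in V$ is a pullback $\psi^*\ell$ of a linear form, so if $s$ vanishes at a point $y\notin E_1\cup E_2$ it vanishes on the whole fibre conic $F_y$ through $y$: writing $(x_0,x_1)=\lambda(y_0,y_1)$ and $(x_3,x_4)=\mu(y_3,y_4)$ along $F_y$, one has $s=\lambda\mu\,s(y)$ up to scale, and $\lambda\mu\neq0$ at $y$ because $Q\cap\Pi_i=E_i$. Now take $y\in J$: every section of $H^0(\Ii_C(2))\subseteq V$ vanishes along $F_y$. Since $\Pi_1\subset H_2$, we get $F_y\cap E_1=H_1\cap E_1\neq\emptyset$, so the connected curve $F_y$ meets both $J$ (at $y$) and $E_1$; as $E_1$, $E_2$, $J$ are pairwise disjoint closed subsets of $C$, this forces $F_y\not\subseteq C$. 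Therefore all sections of $\Ii_C(2)$ vanish at the points of $F_y\setminus C\neq\emptyset$, i.e.\ $\Ii_C(2)$ has base points off $C$ and is not globally generated --- no case distinction needed. For comparison, the paper's own proof takes a different and shorter route: it restricts to a general hyperplane $H\supset E_1$, so that $Z:=H\cap(E_2\cup J)$ is a finite scheme of degree at least $3$ on the smooth quadric surface $Q'=H\cap Q$, disjoint from $E_1$, and observes that global generation of $\Ii_C(2)$ would force $\Ii_{Z,Q'}(1,1)$ to be globally generated, which is absurd for $\deg(Z)\geq 3$.
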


\begin{proof}
Assume that $\Ii_C(2)$ is globally generated. Let $H \subset \PP^4$ denote a general hyperplane containing
$E_1$. Set $Q':= H\cap Q$. $Q'$ is a smooth quadric surface, the scheme $Z:= H\cap (E_2\cup J)$ is
a zero-dimensional scheme of degree at least 3 and $Z\cap E_1=\emptyset$. Since $\Ii_C(2)$
is globally generated, so is $\Ii_{E_1\cup Z,Q'}(2,2)$; since $E_1$ is a curve
of type $(1,1)$ on $Q'$ and $Z\cap E_1=\emptyset$, so $\Ii_Z(1,1)$ is globally generated.
But this is absurd since $\deg (Z)\ge 3$.
\end{proof}

\qquad {\emph {Proof of Proposition \ref{u2}.}} Let us assume that the vector bundle $\Ee$ on $Q$ is globally generated with $c_1(\Ee)=2$ and then it fits into the exact sequence (\ref{eqa1}) with $c_1=2$ and $C$ a smooth curve.
Since $\omega_C\simeq \Oo_C(-1)$, $C$ is a disjoint union of conics, i.e. $C=C_1\cup \cdots \cup C_r$, where each $C_i$ is a conic. By Remark \ref{u0} we have
$\Ee \cong \Oo _Q(2)\oplus \Oo _Q$ if and only if $r=0$. Now assume $r>0$. Lemma \ref{u3}
gives $r\in \{1,2\}$. As the first case let us assume that $r=1$. Since a smooth conic is ACM, the sequence (\ref{eqa1}) gives that
$\Ee$ is ACM. Hence we have $\Ee \cong  \Oo_Q(1)\oplus  \Oo_Q(1)$ by Theorem 3.5 in \cite{O}.

Now let us assume that $r=2$. Let $M_i\subset \mathbb {P}^4$ be the plane spanned by $C_i$. Since $M_i\cap Q = C_i$ and $C_1\cap C_2 =\emptyset$, the set $M_1\cap M_2$ cannot be a line. Hence $M_1\cap M_2$ is a point, say $P$. Since $M_i\cap Q = C_i$ and $C_1\cap C_2 =\emptyset$, we have $P\notin Q$.
The linear projection $\ell _P: \mathbb {P}^4\setminus \{P\} \to \mathbb {P}^3$
send each $E_i$ onto a line $L_i$. In Prop. 3.2 of \cite{SW}, it was shown that all bundles as an extension in (\ref{eqa1}) are pull-back from bundles from extensions
\begin{equation}\label{a2}
0 \to \mathcal {O}_{\mathbb {P}^3} \to \Ff \to \mathcal {I}_{L_1\sqcup L_2}(2)\to 0
\end{equation}
on $\mathbb {P}^3$, in which $\Ff$ is a null-correlation bundle twisted by $1$.

\section{Rank two globally generated bundles with $c_1=3$}

Let $\Ee$ be a globally generated vector bundle of rank 2 on $Q$ with the Chern classes $(c_1, c_2)$, $c_1\geq 3$. It fits into the exact sequence (\ref{eqa1}).

\begin{lemma}
The Chern classes $(c_1, c_2)$ of $\Ee$ satisfies the following inequality for $c_1\geq 3$:
$$c_2\leq \frac{2}{3}(2c_1+1)(c_1-1).$$
In particular, if $c_1=3$, we have $c_2\leq 9$.
\end{lemma}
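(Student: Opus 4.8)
The plan is to feed the exact sequence (\ref{eqa1}) into a liaison (linkage) argument: two general sections of the globally generated sheaf $\Ii_C(c_1)$ cut out a complete intersection curve that contains $C$, and the inequality should fall out of the arithmetic genus of the curve linked to $C$. If $C=\emptyset$ the bundle splits (Remark \ref{u0}) and there is nothing to prove, so I assume $C\ne\emptyset$ and write $d=c_2=\deg C$.

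First I would record the numerics of $C$. From (\ref{eqa1}) the curve $C\subset Q$ is smooth with $\omega_C\simeq\Oo_C(c_1-3)$, so $2p_a(C)-2=\deg\omega_C=(c_1-3)d$, i.e. $p_a(C)=1+\tfrac12(c_1-3)d$. Since $\Ii_C(c_1)$ is a quotient of the globally generated bundle $\Ee$, it is itself globally generated, so its base locus is exactly $C$ (of codimension two); hence two general members $S_1,S_2\in|\Oo_Q(c_1)|$ vanishing on $C$ meet properly. Their intersection $X=S_1\cap S_2$ is a complete intersection curve with class $[X]=(c_1h)^2=2c_1^2\,l$ (using $h^2=2l$), so $\deg X=2c_1^2$, and it contains $C$. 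Letting $C'$ be the curve linked to $C$ inside $X$, we get $\deg C'=2c_1^2-d$.

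Next I would compute $p_a(C')$ by liaison. Adjunction in $Q$ (with $\omega_Q\simeq\Oo_Q(-3)$) gives $\omega_X\simeq\Oo_X(2c_1-3)$, so the linkage genus formula reads
\[
p_a(C)-p_a(C')=\tfrac12(2c_1-3)\bigl(\deg C-\deg C'\bigr)=(2c_1-3)(d-c_1^2).
\]
Substituting $p_a(C)=1+\tfrac12(c_1-3)d$ and collecting terms yields
\[
p_a(C')=1+(2c_1-3)c_1^2-\tfrac{3}{2}(c_1-1)\,d .
\]
Now I would impose nonnegativity of the arithmetic genus of the linked curve. Since $\tfrac32(c_1-1)>0$ for $c_1\ge 3$, the inequality $p_a(C')\ge 0$ is equivalent to
\[
d\le\frac{2\bigl(1+(2c_1-3)c_1^2\bigr)}{3(c_1-1)}=\frac{2}{3}(2c_1+1)(c_1-1),
\]
which is precisely the asserted bound; putting $c_1=3$ gives $c_2=d\le\tfrac{28}{3}$, hence $c_2\le 9$.

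The main obstacle is exactly the step $p_a(C')\ge 0$: a residual curve can in principle have negative arithmetic genus (disjoint rational components), so this is not automatic and carries all the content. What really must be shown is that $C'$ is connected, equivalently $\chi(\Oo_{C'})\le 1$, for then $p_a(C')=1-\chi(\Oo_{C'})\ge 0$. I expect to secure this from the constraints imposed by global generation: either by showing that the general complete intersection $X$ is connected and that the residual inherits connectedness from $C$, or via the Rao/liaison duality that identifies $h^1(\Ii_{C'})$ with a cohomology group of $C$ that vanishes because $\Ii_C(c_1)$ is globally generated. Establishing this genus nonnegativity is the crux of the lemma; everything else is the routine numerical bookkeeping carried out above.
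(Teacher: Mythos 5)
Your numerical skeleton is exactly the paper's: the same complete intersection $X$ of two general members of $\vert\Ii_C(c_1)\vert$, the same linkage genus formula, and the same algebra reducing the asserted bound to $p_a(C')\ge 0$. But the step you yourself flag as carrying ``all the content'' is precisely the step you have not proved, and the two routes you sketch for it are not the ones that work. Connectedness of the residual curve $C'$ is neither automatic nor what the paper uses (nothing in the construction prevents $C'$ from being disconnected a priori), and the Rao-module/liaison-duality route would require a vanishing of the type $h^1(\Ii_C(2c_1-3))=0$, which does not follow from global generation of $\Ii_C(c_1)$ alone --- note that for $c_1=3$, $c_2=8,9$ the paper has to work quite hard in later sections to exhibit particular curves with $h^1(\Ii_C(3))=0$, so this is certainly not free.

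The missing idea is structural, not numerical: liaison gives the exact sequence
\begin{equation*}
0\to \Ii_X(c_1)\to \Ii_C(c_1)\to \omega_{C'}(3-c_1)\to 0,
\end{equation*}
identifying the quotient $\Ii_C(c_1)/\Ii_X(c_1)$ with $\omega_{C'}(3-c_1)$ (since $\omega_X\cong\Oo_X(2c_1-3)$ by adjunction on $Q$). Because $\Ii_C(c_1)$ is globally generated, so is its quotient $\omega_{C'}(3-c_1)$, and a globally generated sheaf on the projective curve $C'$ has nonnegative degree:
\begin{equation*}
2p_a(C')-2+(3-c_1)\deg(C')\ \ge\ 0 .
\end{equation*}
For $c_1\ge 3$ the term $(3-c_1)\deg(C')$ is nonpositive, so this forces $p_a(C')\ge 1$, stronger than the $p_a(C')\ge 0$ you need; your bookkeeping then finishes the lemma. (One should also note, as the paper does, that $C'\neq\emptyset$ unless $\Ee$ splits, in which case the bound is trivially satisfied.) So: correct setup and correct arithmetic, but without the observation that global generation passes to the $\omega_{C'}(3-c_1)$ quotient of the liaison sequence, the proof is incomplete at its crux.
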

\begin{proof}
Since $\Ii_C(c_1)$ is globally generated, so there are two hypersurfaces of degree $c_1$ in $Q$ whose intersection is a curve $X$ containing $C$.
Let $Y$ be a curve such that $X=C+Y$. If $\Ee$ does not split, then $Y$ is not empty so we have the exact sequence of liaison:
$$0\rightarrow \Ii_X (c_1) \rightarrow \Ii_C(c_1) \rightarrow \omega_Y(3-c_1) \rightarrow 0.$$
Since $\Ii_C(c_1)$ is globally generated, so is $\omega_Y(3-c_1)$. It implies that
$$\deg(\omega_Y(3-c_1))=2g'-2+d'(3-c_1)\geq0,$$
$(d'=\deg (Y), g'=g(Y))$ and so $g' \geq 0$. On the other hand, by liaison, we have
$$g'-g=\frac 12 (d'-d)(2c_1-3)$$
and $d'=2c_1^2-d$, $2g-2=d(c_1-3)$ since $\omega_C(3-c_1)=\Oo_C$. Here $d=\deg (C)$ and $g=g(C)$.
Thus we have $g'=1+\frac{d(c_1-3)}{2}+(c_1^2-d)(2c_1-3)\geq0$ and so
$$c_2 \leq \frac{2(2c_1^3-3c_1^2+1)}{3c_1-3}=\frac{2}{3} (2c_1+1)(c_1-1).$$
\end{proof}

Assume that $\Ee$ is a globally generated vector bundle of rank 2 on $Q$ with $c_1=3$ that fits into the sequence:
$$0\rightarrow \Oo_Q \rightarrow \Ee \rightarrow \Ii_C(3) \rightarrow 0,$$
where $C$ is a smooth curve with $\deg (C)=c_2(\Ee)$.

By Proposition \ref{prop1}, if $H^0(\Ee(-3))\not=0$, then $\Ee$ is isomorphic to $\Oo_Q\oplus \Oo_Q(3)$, which is globally generated. So let us assume that $H^0(\Ee(-3))=0$.

As the first case, let us assume that $H^0(\Ee(-2))\not= 0$, i.e. $\Ee$ is unstable.

\begin{proposition}
Let $\Ee$ be a globally generated unstable vector bundle of rank 2 on $Q$ with $c_1(\Ee)=3$. Then $\Ee$ is a direct sum of line bundles. In other words, we have
$$\Ee \simeq \Oo_Q(a)\oplus \Oo_Q(3-a)~~~~,~~~~a=0,1.$$
\end{proposition}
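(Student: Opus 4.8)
The plan is to exploit the destabilizing section directly. Since $\Ee$ is unstable we have $H^0(\Ee(-2))\neq 0$, and by the standing reduction we may assume $H^0(\Ee(-3))=0$ (the case $H^0(\Ee(-3))\neq 0$ having already produced $\Ee\cong\Oo_Q\oplus\Oo_Q(3)$ via Proposition \ref{prop1}, which is the value $a=0$). First I would fix a nonzero $s\in H^0(\Ee(-2))$ and study its zero scheme $Z(s)$. The key claim is that $Z(s)=\emptyset$. Granting this, $s$ is a nowhere-vanishing section of the rank $2$ bundle $\Ee(-2)$, so its cokernel is the line bundle $\det\Ee(-2)=\Oo_Q(-1)$, giving $0\to\Oo_Q(2)\to\Ee\to\Oo_Q(1)\to0$. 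This extension is governed by $\Ext^1(\Oo_Q(1),\Oo_Q(2))=H^1(\Oo_Q(1))=0$, so it splits and $\Ee\cong\Oo_Q(1)\oplus\Oo_Q(2)$, the value $a=1$. Thus the entire content of the proof is the vanishing of $Z(s)$.

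Before excluding a curve I would first rule out a divisorial part of $Z(s)$. If $s$ vanished along an effective divisor $mh$ with $m\geq 1$, then $s$ would factor as $\Oo_Q\to\Oo_Q(m)\to\Ee(-2)$, producing a nonzero section of $\Ee(-2-m)$; multiplying by a nonzero form of degree $m-1\geq 0$ yields a nonzero element of $H^0(\Ee(-3))$, contradicting the hypothesis. Equivalently, the saturation of $\Oo_Q(2)\hookrightarrow\Ee$ is some $\Oo_Q(m')$ with $m'\geq 2$, and $H^0(\Ee(-3))=0$ forces $m'=2$, so $\Oo_Q(2)$ is already saturated and the cokernel is torsion free. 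Either way $s$ vanishes in codimension $\geq 2$, and $Z(s)$ is either empty or a locally complete intersection curve $W$ with $\Nn_{W/Q}\cong\Ee(-2)|_W$.

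The main step, and the point I expect to be the real obstacle, is excluding $W\neq\emptyset$. Here I would invoke adjunction for the zero locus of a section of a rank $2$ bundle: from $\Nn_{W/Q}\cong\Ee(-2)|_W$ one gets $\omega_W\cong(\omega_Q\otimes\det\Ee(-2))|_W=(\Oo_Q(-3)\otimes\Oo_Q(-1))|_W=\Oo_W(-4)$. On any connected component $W_i$ this reads $2p_a(W_i)-2=\deg\omega_{W_i}=-4\deg(W_i)$, hence $p_a(W_i)=1-2\deg(W_i)<0$, which is impossible since a connected (Gorenstein) curve satisfies $\deg\omega_{W_i}\geq -2$, i.e. $p_a(W_i)\geq 0$. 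Therefore $W=\emptyset$, as desired. The delicate bookkeeping is exactly ensuring that $Z(s)$ is a genuine pure one-dimensional Gorenstein curve so that adjunction and the per-component bound $p_a\geq 0$ apply: any embedded or divisorial contribution must be cleared first, and this is precisely what the hypothesis $H^0(\Ee(-3))=0$ together with the saturation of $\Oo_Q(2)\hookrightarrow\Ee$ provide. (An alternative route would restrict the composite $\Oo_Q(2)\to\Ee\to\Ii_C(3)$ to locate $C$ on a hyperplane section $H\cap Q$ and analyse its bidegree there, but the adjunction argument above is cleaner and avoids discussing singular quadric surfaces.)
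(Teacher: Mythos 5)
Your skeleton is sound up to the last step: the reduction to $H^0(\Ee(-3))=0$, the exclusion of a divisorial part of $Z(s)$, the identification of a nonempty $Z(s)$ as a locally complete intersection curve $W$ with $\Nn_{W/Q}\cong \Ee(-2)|_W$, the adjunction $\omega_W\cong \Oo_W(-4)$, and the splitting via $\Ext^1(\Oo_Q(1),\Oo_Q(2))=0$ are all correct. The gap is the final inequality: it is \emph{false} that a connected Gorenstein (even lci) curve has $p_a\ge 0$. That inequality requires $W$ to be reduced (then $h^0(\Oo_{W_i})=1$ and $p_a(W_i)=h^1(\Oo_{W_i})\ge 0$); you flag embedded and divisorial components as the delicate point, but those you did handle --- what you did not handle is non-reducedness, and nothing forces it: $s$ is a distinguished, possibly unique, section of $\Ee(-2)$, not a general section of a globally generated bundle, so no Bertini-type argument applies. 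Concretely, let $L\subset Q$ be a line, whose conormal bundle is $\Oo_L\oplus \Oo_L(-1)$, and let $W$ be the Ferrand double structure on $L$ defined by a surjection $\Oo_L\oplus \Oo_L(-1)\onto \Oo_L(2)$ (forms of degrees $2$ and $3$ with no common zero). Then $W$ is connected, locally complete intersection, of degree $2$, with nilpotent ideal $\Ii_L/\Ii_W\cong \Oo_L(2)$, so $\chi(\Oo_W)=1+3=4$ and $p_a(W)=-3=1-2\deg (W)$. Moreover $H^1(L,\Oo_L(2))=0$ gives $\Pic (W)\cong \Pic (L)=\ZZ$, every line bundle on $W$ is some $\Oo_W(k)$ of total degree $2k$, and $\deg \omega_W=2p_a(W)-2=-8$, whence $\omega_W\cong \Oo_W(-4)$. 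So this nonempty $W$ satisfies every condition your argument extracts.

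This is not a repairable technicality within your strategy, because after the reduction you never use global generation again, and the statement your argument would then prove --- any rank $2$ bundle on $Q$ with $c_1=3$, $H^0(\Ee(-2))\ne 0$ and $H^0(\Ee(-3))=0$ splits --- is false: applying the Hartshorne--Serre correspondence to the $W$ above (here $\omega_W\otimes \omega_Q^\vee \otimes \Oo_W(1)\cong \Oo_W$ and $H^2(\Oo_Q(1))=0$, so an extension class restricting to a nowhere-vanishing section exists) yields a locally free, non-split extension $0\to \Oo_Q(2)\to \Ee\to \Ii_W(1)\to 0$, which is unstable with $H^0(\Ee(-3))=0$ yet is not a direct sum of line bundles (it is, of course, not globally generated). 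So global generation must enter the exclusion of $W\ne \emptyset$, and that is exactly how the paper proceeds: the instability is played against the \emph{smooth} curve $C$ of the sequence (\ref{eqa1}) (smooth because there the section of $\Ee$ is general, by global generation); the nonzero composite $\Oo_Q(2)\to \Ee\to \Ii_C(3)$ gives $h^0(\Ii_C(1))>0$, hence $\deg (C)\le 6$, while non-splitting forces $\deg (C)=c_2\in \{5,6\}$; finally a smooth curve with $\omega_C\cong \Oo_C$ of degree $5$ or $6$ lying on a quadric surface (smooth or a cone) does not exist, by the bidegree computation, resp.\ Exercise V.2.9 of \cite{Hartshorne}. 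Note this is precisely the ``alternative route'' you set aside in your closing parenthesis: the bidegree analysis you hoped to avoid is what carries the proof, while the adjunction shortcut founders on non-reduced zero schemes.
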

\begin{proof}
Note that $h^0(\Ii_C(1))>0$ and so $C$ is contained in the complete intersection of $Q$ and two hypersurfaces of degree $1$ and $3$ inside $\PP^4$ that is the ambient space containing $Q$. In particular, we have $\deg (C)\leq 6$. On the other hand, from a section in $H^0(\Ee(-2))$, we have a sequence
$$0\rightarrow \Oo_Q (2) \rightarrow \Ee \rightarrow \Ii_{C'}(1) \rightarrow 0.$$
If $C'$ is empty, then $\Ee$ is isomorphic to $\Oo_Q(1)\oplus \Oo_Q(2)$. If not, the degree of $C'$, which is $c_2(\Ee)-4$ is at least $1$. Thus $\deg (C)=c_2(\Ee)$ is either 5 or 6.
Note that $\omega_C\simeq \Oo_C$ and so $C$ consists of smooth elliptic curves. If $\deg (C)=5$, then $C$ is a quintic elliptic curve contained in a hyperplane section of $Q$ which is a quadric surface $Q_2$. In the case when $Q_2$ is smooth, let $(a,b)$ is the bidegree of $C$ as a divisor of the quadric surface and then we have $\deg (C)=5=a+b$ and $g(C)=1=ab-a-b+1$. But it is impossible. Similarly we can show that $\deg(C)= 6$ is not possible. If $Q_2$ is a quadric surface cone, we can show that it is impossible by Exercise V.2.9 of \cite{Hartshorne}.
\end{proof}

Assume now that $H^0(\Ee(-2))=0$, i.e. $\Ee$ is stable. By the Bogomolov inequality, we have $c_2(\Ee)\geq 5$. Recall that $c_2(\Ee)\leq 9$.

\begin{proposition}\cite{OS}
Every vector bundle in $\mathfrak{M}(3, c_2)$ with $c_2=5,6$ is globally generated. In the case of $c_2=7$, the general vector bundle is globally generated.
\end{proposition}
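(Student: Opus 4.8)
The plan is to reduce the global generation of $\Ee$ to a quadratic normality property of the associated curve $C$, and then to verify that property for the three values of $c_2$. Since $\Ee$ is stable with $c_1(\Ee)=3$, it sits in the sequence (\ref{eqa1})
$$0\to \Oo_Q \to \Ee \to \Ii_C(3)\to 0,$$
with $C$ smooth and $\omega_C\simeq\Oo_C$, so $C$ is a smooth elliptic curve of degree $c_2$. Because $H^1(\Oo_Q)=0$, the map $H^0(\Ee)\to H^0(\Ii_C(3))$ is surjective, and a five-lemma argument on the evaluation maps then shows that $\Ee$ is globally generated if and only if $\Ii_C(3)$ is. Thus the whole problem comes down to proving that $\Ii_C(3)$ is globally generated.

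To treat $\Ii_C(3)$ I would invoke Castelnuovo--Mumford regularity: it suffices that $\Ii_C$ be $3$-regular on $Q$, i.e. $H^1(\Ii_C(2))=H^2(\Ii_C(1))=H^3(\Ii_C)=0$. The last two vanishings are automatic: twisting $0\to\Ii_C\to\Oo_Q\to\Oo_C\to0$ and using that $Q$ is ACM (so $H^i(\Oo_Q(k))=0$ for $0<i<3$) gives $H^3(\Ii_C)=0$ and $H^2(\Ii_C(1))\cong H^1(\Oo_C(1))$, and the latter vanishes since $\deg\Oo_C(1)=c_2>0$ on the genus-$1$ curve $C$. The remaining condition $H^1(\Ii_C(2))=0$ is, by the same sequence, exactly the surjectivity of $H^0(\Oo_Q(2))\to H^0(\Oo_C(2))$, that is, the quadratic normality of $C$ in $Q$.

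It is in this last step that the three cases diverge, which is precisely what produces the dichotomy in the statement. For $c_2=5$ the curve $C$ is an elliptic normal quintic in $\PP^4$, hence projectively normal and ACM; then $H^1(\Ii_C(k))=0$ for all $k$, and (\ref{eqa1}) shows $\Ee$ is ACM, forcing $\Ee\cong\Sigma(1)$ by Theorem 3.5 in \cite{O}, which is globally generated as the twist of the spinor bundle by $\Oo_Q(1)$. For $c_2=6$ one has $h^0(\Oo_Q(2))=14$ and $h^0(\Oo_C(2))=2c_2=12$, so surjectivity is equivalent to $h^0(\Ii_{C,Q}(2))=2$, i.e. to $C$ imposing independent conditions on quadrics; I would show this holds for every curve $C$ arising from a stable bundle. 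For $c_2=7$ the two numbers coincide, $h^0(\Oo_Q(2))=14=h^0(\Oo_C(2))$, so surjectivity means $H^0(\Ii_{C,Q}(2))=0$, i.e. $C$ lies on no quadric section of $Q$ other than $Q$ itself; a general degree-$7$ elliptic curve satisfies $h^0(\Ii_{C,\PP^4}(2))=1$, the unique quadric being $Q$, so the general bundle is globally generated, while a curve lying on an extra quadric may fail regularity.

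I expect the main obstacle to be the control of quadratic normality in the borderline cases $c_2=6,7$: one must argue that the curves $C$ produced by \emph{stable} bundles (not arbitrary elliptic curves) impose the expected number of conditions on quadrics. For this I would lean on the explicit description of the moduli spaces $\mathfrak{M}(3,c_2)\cong\mathfrak{M}(-1,c_2-4)$ in \cite{OS}, using it both to guarantee $h^0(\Ii_{C,\PP^4}(2))=1$ generically when $c_2=7$ and to exclude the degenerate behaviour when $c_2=6$; the remaining verifications are the Riemann--Roch computations indicated above.
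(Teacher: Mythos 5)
Your overall reduction is sound as far as it goes: global generation of $\Ee$ is equivalent to that of $\Ii_C(3)$, and by Castelnuovo--Mumford regularity this follows from $h^1(\Ii_C(2))=h^2(\Ii_C(1))=h^3(\Ii_C)=0$, i.e.\ from quadratic normality of $C$; this is the same mechanism the paper uses for $c_2=7$. But there is a circularity at your very first step: you take $C$ to be a \emph{smooth} elliptic curve. The sequence (\ref{eqa1}) with $C$ smooth is available only for bundles already known to be globally generated (smoothness of the zero locus of a general section is a Bertini-type consequence of global generation). For an arbitrary stable $\Ee\in\mathfrak{M}(3,c_2)$ --- which is what the proposition is about --- a general section only vanishes along a locally complete intersection curve with $\omega_C\cong\Oo_C$, which may a priori be singular, reducible or non-reduced (for $c_2=5$, for instance, a non-degenerate cycle of five lines is not excluded at this stage). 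Your $c_2=5$ argument (``$C$ is an elliptic normal quintic, hence projectively normal'') and your counts $h^0(\Oo_C(2))=2c_2$, $h^1(\Oo_C(1))=0$ all lean on smoothness and irreducibility, so they do not apply to every stable bundle. The paper avoids exactly this trap by working instead with a section of $\Ee(-1)$, i.e.\ the sequence (\ref{seq2}): there the zero scheme $Z$ has degree $c_2-4\le 3$ and $\omega_Z\cong\Oo_Z(-2)$, so its structure is forced with no smoothness hypothesis (a line when $c_2=5$, whence $\Ee$ is ACM and $\Ee\cong\Sigma(1)$ by Theorem 3.5 in \cite{O}; two skew lines or a double line when $c_2=6$, giving the monad of Remark 4.8 in \cite{OS}).

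Second, the two borderline cases are not actually proved. For $c_2=6$ your key claim --- that every curve attached to a stable bundle imposes independent conditions on quadrics --- is announced (``I would show this holds'') but no argument is given, and that claim is the entire content of the case. For $c_2=7$ you assert that a general degree-$7$ elliptic curve in $Q$ satisfies $h^0(\Ii_{C,\PP^4}(2))=1$; this is precisely what Lemma \ref{b1} of the paper establishes, and it requires a genuine construction (a connected nodal curve $B\cup D_1\cup D_2$ of degree $7$ smoothed inside $Q$ via the Hartshorne--Hirschowitz theorem \cite{hh}), not a genericity remark. Moreover, passing from ``general elliptic curve of degree $7$'' to ``general bundle in $\mathfrak{M}(3,7)$'' needs the bundles arising from such curves to fill up a dense subset of the moduli space; citing \cite{OS} for this is legitimate (the paper does so too), but as written your proposal reduces the proposition to exactly the statements that carry all the difficulty and leaves them unproven.
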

\begin{proof}
Note that $\Ee$ fits into the sequence
\begin{equation}\label{seq2}
0\rightarrow \Oo_Q(1) \rightarrow \Ee \rightarrow \Ii_Z(2) \rightarrow 0.
\end{equation}
with $Z$ a locally complete intersection, $\deg (Z) =c_2-4$ and $\omega_Z\simeq \Oo_Z(-2)$.

If $c_2(\Ee)=5$, then $Z$ is a line. From the sequence (\ref{seq2}) we get that $\Ee$ is ACM. Since $\Ee$ is stable, we get $\Ee \simeq \Sigma(1)$ \cite{O}. Obviously $\Sigma (1)$ is globally generated.

If $c_2(\Ee)=6$, then $\Ee$ is the cohomology of the following monad (Remark 4.8 in \cite{OS}):
$$0\rightarrow \Oo_Q(1) \rightarrow \Sigma(1)^{\oplus 2} \rightarrow \Oo_Q(2) \rightarrow 0.$$
In this case, $Z$ is either 2 disjoint lines on $Q$ or a line with multiplicity 2. It is also known to be globally generated.

If $c_2(\Ee)=7$, then a general vector bundle $\Ee$ in $\mathfrak{M}(3,7)$ can be shown to be globally generated using the `Castelnuovo-Mumford criterion' (Theorem 5.2 in \cite{OS}). The same result also follows from the Lemma \ref{b1} with $C=A$. Since $h^0(\Ii_C(2)) =0$
and $h^0(C,\Oo_C(2)) = 14 = h^0(\Oo _Q(2))$, (\ref{eqa1}) gives $h^1(\Ee(-1)) =0$. Since $C$ is an elliptic curve,
we have $h^1(C,\Oo _C(1)) =0$. Hence (\ref{eqa1}) gives $h^2(\Ee(-2)) = h^1(C,\Oo _C(1)) =0$.
We have $h^3(\Ee(-3)) = h^0(\Ee^\vee ) = h^0(\Ee(-3)) =0$. Hence the `Castelnuovo-Mumford criterion' gives that $\Ee$ is globally generated.
\end{proof}

\section{Case $(c_1, c_2)=(3,8)$}

Let $\Ff=\Ee(-2)$. We can compute:\
$$
\left\{
       \begin{array}{ll}
                                           H^1(\Ff(t))=0 \text{  for  } t\leq -1\\
                                             \chi (\Ff \otimes \Ff^\vee)=-17 \\
                                             \chi (\Ff)=-\chi (\Ff(-2))=-3\\
                                             \chi (\Ff(1))=-\chi (\Ff(-3))=-2\\
                                             \chi (\Ff(-1))=0\\
                                             \chi (\Ff(2))=7
                                           \end{array}
                             \right. $$
Then the cohomology table for $\Ff=\Ee(-2)$ is as follows:\\
\begin{equation}\label{t1}
\begin{tabular}{|c|c|c|c|c|c|}
  \hline
  0 & 0 & 0 & 0 & 0 & 0 \\
  2 & 3 & 0 & 0 & 0 & 0 \\
  0 & 0 & 0 & 3 & 2 & $b$ \\
  0 & 0 & 0 & 0 & 0 & $a$ \\
   \hline
   \hline
  -3& -2& -1 & 0 & 1 & 2 \\
  \hline
\end{tabular}\end{equation}
with $a-b=7$.

\begin{proposition}\label{monad}
The vector bundles $\Ee$ in $\mathfrak{M}(3,8)$ with $H^0(\Ee (-1))=0$ is the cohomology of the following monad:
$$ 0\to \Oo_Q(1)^{\oplus 3}\to  \Sigma(1)^{\oplus 4}\to \Oo_Q(2)^{\oplus 3} \to 0$$
\end{proposition}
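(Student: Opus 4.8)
The plan is to recover $\Ee$ from its cohomology by means of Ottaviani's Beilinson-type spectral sequence on $Q$ (\cite{O1}), whose input is a full exceptional collection built from the line bundles $\Oo_Q(i)$ and a twist of the spinor bundle $\Sigma$. Passing to the normalization $\Ff=\Ee(-2)$ (so that $c_1(\Ff)=-1$, $c_2(\Ff)=4$, with cohomology recorded in (\ref{t1})), this spectral sequence abuts to $\Ff$ concentrated in degree $0$, and its $E_1$-page is assembled from the groups $H^q(\Ff(p))$ and $H^q(\Ff\otimes\Sigma^\vee)$ tensored with the corresponding block. Once the sequence degenerates, the surviving blocks form a monad whose cohomology is $\Ff$; twisting by $\Oo_Q(2)$ then yields the asserted monad for $\Ee$.

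The one piece of input not already displayed in (\ref{t1}) is the spinor-twisted cohomology $H^\bullet(\Ff\otimes\Sigma^\vee)$, which controls the middle term. Using $\Sigma^\vee\cong\Sigma(-1)$ and tensoring the canonical sequence $0\rightarrow\Sigma^\vee\rightarrow\Oo_Q^{\oplus 4}\rightarrow\Sigma\rightarrow 0$ with $\Ff$, I would reduce its computation to the entries of (\ref{t1}) and a Riemann--Roch count built from $\chi(\mathcal{E}nd(\Sigma))=1$ and $\chi(\Sigma(-1))=0$, obtaining $\chi(\Ff\otimes\Sigma^\vee)=-4$. Since $H^0(\Ff)=0$ already gives $H^0(\Ff\otimes\Sigma^\vee)=0$, the same sequence together with the vanishings of (\ref{t1}) shows this cohomology is concentrated in degree $1$, hence $4$-dimensional; this is precisely the multiplicity of the block $\Sigma(1)$.

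For the outer terms I would read off the intermediate line-bundle cohomology. By Serre duality the nonzero groups pair up, $h^1(\Ff)=h^2(\Ff(-2))=3$ and $h^1(\Ff(1))=h^2(\Ff(-3))=2$, reflecting that $\Ee$ is, up to the twist $\Oo_Q(3)$, self-dual---an odd instanton. The value $3$ furnishes each outer block with multiplicity $3$, while the self-dual pair of $2$-dimensional groups, belonging to the fourth block of the collection, must drop out of the final complex. Here the hypothesis $H^0(\Ee(-1))=H^0(\Ff(1))=0$ is decisive: it annihilates the one $q=0$ entry that would otherwise survive as a spurious summand, so that after the differentials only the three expected blocks remain, in multiplicities $3$, $4$, $3$. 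Twisting back gives
$$0\rightarrow\Oo_Q(1)^{\oplus 3}\rightarrow\Sigma(1)^{\oplus 4}\rightarrow\Oo_Q(2)^{\oplus 3}\rightarrow 0,$$
with cohomology $\Ee$.

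I expect the degeneration to be the main obstacle. One must verify that the fourth column of the $E_1$-page is cleared---its $q=0$ part by the hypothesis and its remaining $2$-dimensional self-dual entries by the first differential, which should be an isomorphism for dimension reasons---and that no higher differential reintroduces cohomology, so that the three surviving blocks genuinely assemble into a monad abutting to $\Ff$. A final and more routine point is to confirm that the resulting three-term complex is a bona fide monad, that is, that the left-hand map is a subbundle inclusion and the right-hand map is surjective; this is forced once the cohomology is identified with the locally free sheaf $\Ee$.
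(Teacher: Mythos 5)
Your proposal is correct in substance but follows a genuinely different route from the paper's. The paper argues in Horrocks style, never invoking a spectral sequence: starting from table (\ref{t1}), it kills $H^1(\Ff)$ by an extension $0\to\Ff\to\Bb\to\Oo_Q^{\oplus 3}\to 0$, then kills $H^1(\Bb^\vee)$ and $H^1(\Bb^\vee(-1))$ by a second extension $0\to\Bb^\vee\to\Pp\to\Oo_Q(1)^{\oplus 3}\oplus\Oo_Q^{\oplus 2}\to 0$; the resulting rank-$10$ bundle $\Pp$ is ACM, so Ottaviani's splitting criterion for ACM bundles on quadrics (Theorem 3.5 of \cite{O}) forces $\Pp\cong\Oo_Q^{\oplus 2}\oplus\Sigma^{\oplus 4}$, and unwinding the two extensions produces the monad. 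You instead run the Beilinson--Kapranov spectral sequence for the exceptional collection on $Q$ containing $\Sigma$; this is exactly the method of \cite{faenzi} that the paper itself points to in the remark immediately after the proposition, so it is a legitimate alternative, and your numbers check out: $\chi(\Ff\otimes\Sigma^\vee)=-4$ with cohomology concentrated in degree $1$ (via the canonical sequence $0\to\Sigma^\vee\to\Oo_Q^{\oplus4}\to\Sigma\to0$, Serre duality, and the vanishing column of (\ref{t1})), matching the middle multiplicity $4$, while $h^1(\Ff)=h^2(\Ff(-2))=3$ gives the outer multiplicities. The paper's route buys elementarity (cohomology tables plus a splitting theorem); yours buys generality and a conceptual explanation of the monad's shape, valid for all odd instantons at once.

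Two steps in your sketch need repair. First, on $Q$ the $E_1$-page is not literally assembled from $H^q(\Ff(p))$ and $H^q(\Ff\otimes\Sigma^\vee)$: the entries involve the dual collection, whose members are Kapranov's extension bundles rather than line-bundle twists. The clean fix is to note that the twist $-1$ column of (\ref{t1}) vanishes identically, i.e.\ $\Ext^\bullet(\Ee,\Oo_Q)=H^\bullet(\Ff(-1))=0$, so that $\Ee$ lies in the admissible subcategory $\langle\Oo_Q(1),\Sigma(1),\Oo_Q(2)\rangle$ of $D^b(Q)$, where the monad is its canonical three-term resolution. Second, and relatedly, your disposal of the ``fourth block'' is the genuinely shaky point: the pair $h^1(\Ff(1))=h^2(\Ff(-3))=2$ cannot be cancelled by ``a first differential which is an isomorphism for dimension reasons''---equality of dimensions never forces a differential to be an isomorphism. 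With the orthogonality above these groups simply never enter the spectral sequence; the hypothesis $H^0(\Ee(-1))=H^0(\Ff(1))=0$ is instead consumed where the paper consumes it, namely in establishing table (\ref{t1}) itself, since together with $\chi(\Ff(1))=-2$ and the vanishing $H^1(\Ff(t))=0$ for $t\le -1$ it pins down $h^1(\Ff(1))=h^2(\Ff(-3))=2$, the entries that fix all multiplicities in either proof.
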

\begin{proof}Let us consider the sequence killing $H^1(\Ff)$:
\begin{equation} 0\to \Ff\to \Bb\to \Oo_Q^{\oplus 3}\to 0
\end{equation}
$H^2_*(\Ff)\cong H^2_*(\Bb)$ and  $H^1(\Ff)=H^0(\Ff)=0$ since the map $H^0(\Oo_Q^{\oplus 3})\to H^1(\Ff) $ is an isomorphism. Moreover $h^3(\Bb(-3))=h^3(\Oo_Q(-3)^{\oplus 3})=3$. So the cohomology table for $\Bb$ and $\Bb^{\vee}$ are as follows respectively :\\
\begin{center}
\begin{tabular}{|c|c|c|c|}
  \hline
  3 & 0 & 0 & 0   \\
  2 & 3 & 0 & 0   \\
  0 & 0 & 0 & 0   \\
  0 & 0 & 0 & 0   \\
   \hline
   \hline
  -3& -2& -1 & 0   \\
  \hline
\end{tabular}\quad\quad
\begin{tabular}{|c|c|c|c|}
  \hline
  0 & 0 & 0 & 0  \\
  0 & 0 & 0 & 0  \\
  0 & 0 & 3 & 2  \\
  0 & 0 & 0 & 3  \\
   \hline
   \hline
  -3& -2& -1 & 0  \\
  \hline

\end{tabular}
\end{center}

Now let us consider the sequence killing $H^1(\Bb^\vee)$ and $H^1(\Bb^\vee(-1))$:
\begin{equation}\label{k2} 0\to \Bb^\vee\to \Pp \to \Oo_Q(1)^{\oplus 3} \oplus \Oo_Q^{\oplus 2} \to 0
\end{equation}
$H^2_*(\Pp)\cong H^2_*(\Bb^\vee)$ and  $H^1(\Pp)=H^1(\Pp(-1))=0$. $h^0( \Bb^\vee)-h^0 (\Pp)+ h^0(\Oo_Q(1)^{\oplus 3} \oplus \Oo_Q^{\oplus 2})-h^1(\Bb^\vee)=0$ then $h^0(\Pp)=18$. Moreover $h^3(\Pp(-3))=h^3(\Oo_Q(-3)^{\oplus 2})=2$. So the cohomology table for $\Pp$ and $\Pp^{\vee}$ are as follows:\\
\begin{center}
\begin{tabular}{|c|c|c|c|}
  \hline
  2 & 0 & 0 & 0  \\
  0 & 0 & 0 & 0  \\
  0 & 0 & 0 & 0  \\
  0 & 0 & 0 & 18  \\
   \hline
   \hline
  -3& -2& -1 & 0  \\
  \hline
\end{tabular}\quad\quad
\begin{tabular}{|c|c|c|c|}
  \hline
  18 & 0 & 0 & 0  \\
  0 & 0 & 0 & 0  \\
  0 & 0 & 0 & 0 \\
  0 & 0 & 0 & 2  \\
   \hline
   \hline
  -3& -2& -1 & 0  \\
  \hline
\end{tabular}
\end{center}

Since $\Pp(1)$ and $\Pp^\vee(1) $ are Castelnuovo-Mumford regular, so $\Pp$ is an ACM vector bundle. Note that the rank of $\Pp $ is $10$, $c_1(\Pp)=4$, $h^0(\Pp)=18$ and $h^0(\Pp^\vee)=2$. So we have $\Pp=\Oo_Q^{\oplus 2}\oplus \Sigma^{\oplus 4}$, the sequence (\ref {k2}) reduces to
$$0\to \Bb^\vee\to \Sigma^{\oplus 4} \to \Oo_Q(1)^{\oplus 3} \to 0$$ and the monad is as we claimed.

\end{proof}

\begin{remark}
In \cite{faenzi}, every vector bundle $\Ff$ of rank 2 with the Chern classes $(c_1, c_2)=(-1,k)$ and $H^1(\Ff(-1))=0$ on $Q$, can be shown to be the cohomology of a monad
$$0\rightarrow \Oo_Q^{\oplus k-1} \rightarrow \Sigma^{\oplus k} \rightarrow \Oo_Q(1)^{\oplus k-1} \rightarrow 0$$
using the Kapranov spectral sequences on $Q$.
\end{remark}
Note that $H^1(\Ee(-3))=H^1(\Ii_C)=0$ implies that $H^0(\Oo_C)=1$, i.e. $C$ is a smooth irreducible elliptic curve of degree 8.

\begin{proposition}\label{a00}
There is a globally generated and stable vector bundle $\Ee$ of rank 2 on $Q$ with $(c_1, c_2)=(3,8)$ such that
\begin{itemize}
\item $h^0(\Ee(-1))=0$, $h^0(\Ee)=7$
\item $h^1(\Ee(-1))=2, h^1(\Ee(-2))=3$ and $h^1(\Ee(t))=0$ for
all $t\ge 0$,$t\le -3$
\end{itemize}
with a smooth, non-degenerate and irreducible elliptic curve of degree $8$ as its associated curve on $Q$.
\end{proposition}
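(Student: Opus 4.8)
The plan is to obtain $\Ee$ from a curve through the Hartshorne--Serre correspondence, thereby reducing the whole statement to the existence of a single elliptic curve on $Q$. First I would note that it suffices to produce a smooth, irreducible, non-degenerate elliptic curve $C\subset Q$ of degree $8$ with $h^0(\Ii_C(2))=0$ and $h^1(\Ii_C(3))=0$ whose twist $\Ii_C(3)$ is globally generated. Indeed, since $c_1=3$ forces $\omega_C\cong\Oo_C$, such a $C$ is the zero scheme of a section of a rank $2$ bundle $\Ee$ fitting into the sequence (\ref{eqa1}) with $(c_1,c_2)=(3,8)$. Global generation of $\Ii_C(3)$ then forces global generation of $\Ee$: away from $C$ this is immediate, while along $C$ the defining section of $\Ee$ vanishes, so the fibre sequence yields $\Ee\otimes k(x)\cong\Ii_C(3)\otimes k(x)$, a $2$--dimensional space generated by the global sections of $\Ii_C(3)$. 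Finally $h^0(\Ee(-2))=h^0(\Ii_C(1))=0$ by non-degeneracy, which gives stability, and likewise $h^0(\Ee(-1))=h^0(\Ii_C(2))=0$.

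Granting such a $C$, the whole cohomology table is read off from (\ref{eqa1}), the standard vanishings of $H^i(\Oo_Q(t))$, and the Euler characteristics listed before (\ref{t1}); this step is routine and I would keep it short. For instance $h^0(\Ee)=1+h^0(\Ii_C(3))=1+6=7$, while twisting (\ref{eqa1}) by $\Oo_Q(-1)$ and by $\Oo_Q(-2)$ gives $h^1(\Ee(-1))=h^1(\Ii_C(2))=h^0(\Oo_C(2))-h^0(\Oo_Q(2))=16-14=2$ and $h^1(\Ee(-2))=h^1(\Ii_C(1))=h^0(\Oo_C(1))-h^0(\Oo_Q(1))=8-5=3$, the relevant restriction maps being injective by the hypotheses on $C$; the remaining vanishings $h^1(\Ee(t))=0$ for $t\ge 0$ and $t\le -3$ follow in the same way from the cohomology of $\Oo_C$ and of $\Oo_Q(t)$ (using $H^1(\Ii_C)=0$, i.e.\ that $C$ is connected), reproducing Table (\ref{t1}) with $a=7$ and $b=0$.

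The real content, and the step I expect to be the main obstacle, is the construction of $C$ together with the global generation of $\Ii_C(3)$. One cannot appeal to Castelnuovo--Mumford regularity here, since $h^1(\Ii_C(2))=2\ne 0$, so $\Ii_C(3)$ is not $0$--regular and global generation is a genuine condition. Following the idea announced in the introduction, I would first exhibit an explicit reducible nodal curve $C_0\subset Q$ of degree $8$ and arithmetic genus $p_a=1$ — for example the union $A\cup L$ of the smooth elliptic curve $A$ of degree $7$ occurring in the case $c_2=7$ with a line $L$ meeting it transversally in one point, which indeed has $p_a(C_0)=1$ — chosen to be non-degenerate and so that $\Ii_{C_0}(3)$ is globally generated, with $h^1(\Ii_{C_0}(3))=0$ and $h^0(\Ii_{C_0}(2))=0$. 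These last properties I would verify directly on the components and across the node.

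Finally I would smooth $C_0$: invoking the result of \cite{hh}, adapted from $\PP^3$ to $Q$, the curve $C_0$ deforms in a flat family whose general member $C$ is a smooth irreducible elliptic curve of degree $8$. Non-degeneracy and the vanishings $h^0(\Ii_C(2))=0$ and $h^1(\Ii_C(3))=0$ pass to the general member by semicontinuity, and — this is the crux — global generation of $\Ii_C(3)$ is preserved: once $h^1(\Ii_{C_0}(3))=0$, the function $t\mapsto h^0(\Ii_{C_t}(3))$ is constant near $C_0$, so the locus where the evaluation map drops rank is closed and avoids the general member of the family. A general smooth $C$ therefore meets all the requirements, and the bundle $\Ee$ built from it is the one sought. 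The heart of the matter is thus the simultaneous control of smoothability and of the global generation of $\Ii_{C_0}(3)$ on the reducible curve.
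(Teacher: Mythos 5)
Your overall architecture coincides with the paper's: reduce via the Serre correspondence to producing a smooth, non-degenerate elliptic curve $C\subset Q$ of degree $8$ with $h^0(\Ii_C(2))=0$, $h^1(\Ii_C(3))=0$ and $\Ii_C(3)$ spanned; read off the cohomology table from (\ref{eqa1}) (your numerics $h^0(\Ee)=7$, $h^1(\Ee(-1))=2$, $h^1(\Ee(-2))=3$ are right); and obtain $C$ by smoothing a nodal degree-$8$ curve of arithmetic genus $1$ via \cite{hh} adapted to $Q$, transferring spannedness by the ``open in a flat family with constant $h^1$'' argument. All of this is correct and is exactly what the paper does.

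The genuine gap is that the entire content of the proposition sits in the three claims about your reducible curve $C_0=A\cup L$ --- that $\Ii_{C_0}(3)$ is spanned, that $h^1(\Ii_{C_0}(3))=0$, and that $C_0$ is smoothable in $Q$ --- and you prove none of them; ``verify directly on the components and across the node'' is a declaration, not an argument, and these are not local conditions. Spannedness of $\Ii_{C_0}(3)$ concerns the base locus of a $6$-dimensional space of sections and can fail far from $C_0$: note that every trisecant line of $A$ is automatically contained in $Q$ (a line meeting a quadric in three points lies on it), so multisecants of $A$ are genuine candidates for extra base components; in the natural attack via the residual sequence $0\to\Ii_A(2)\to\Ii_{C_0}(3)\to\Ii_{C_0\cap Q',Q'}(3)\to 0$, where $Q'=Q\cap H$ and $H\supset L$, one must exclude bad positions of the six points of $(A\cap Q')\setminus (A\cap L)$ relative to the two rulings of $Q'$, and these points are not general, being cut on $A$ by the $2$-dimensional family of hyperplanes through the fixed line $L$. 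Likewise $h^1(\Ii_{C_0}(3))=0$ does not follow from spannedness of $\Ii_A(3)$: from $0\to\Ii_{C_0}(3)\to\Ii_A(3)\to\Ii_{A\cap L,L}(3)\to 0$ with $\Ii_{A\cap L,L}(3)\cong\Oo_{\PP^1}(2)$ you would need surjectivity of $H^0(\Ii_A(3))\to H^0(\Oo_{\PP^1}(2))$, and surjectivity at the sheaf level does not give surjectivity on global sections. Finally, invoking \cite{hh} requires verifying $h^1(\Nn_{C_0})=0$ (the paper does this by an explicit elementary-transformation computation on normal bundles), which you skip. This is precisely why the paper does not take a curve of the form $A\cup L$: it builds its degree-$8$ curve as $E\cup D_1\cup D_2$, with $E$ a degree-$6$ elliptic curve inside a quartic Del Pezzo surface $U\subset Q$ and $D_1,D_2$ lines through general points of $U$, so that the base-locus analysis can be localized to the two surfaces $U$ and $Q\cap\langle D_1,D_2\rangle$ (Remark \ref{a1.1}, Lemmas \ref{a2} and \ref{a3}) and the $h^1$-vanishing reduces to a configuration of points on a quadric surface (Lemma \ref{a4}). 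Your curve may well work, but closing the gap would require work comparable to those lemmas, none of which appears in the proposal.
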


Let us fix $5$ distinct points $P_1,\dots P_5\in \mathbb {P}^2$ such that no $3$ of them are collinear. Let
$\pi : W\to \mathbb {P}^2$ be the blowing-up of these 5 points. The anticanonical line bundle $\omega _W^\vee$
of $W$ is very ample and the image $U \subset \mathbb {P}^4$ of $W$ by the complete linear system $\vert \omega _W^\vee \vert$
is a smooth Del Pezzo surface of degree $4$ which is the complete intersection of two quadric hypersurfaces (\cite{d})
(in the standard notation for Del Pezzo surfaces $\omega _W^\vee = (3;1,1,1,1,1)$, i.e. it is given by the strict transform of
all cubic plane curves containing all points $P_1,\dots ,P_5$). Conversely, any smooth complete intersection $U'\subset
\mathbb {P}^4$ is the complete intersection of two quadric hypersurfaces. Hence for general $P_1,\dots ,P_5$
we may assume that $U$ is contained in a smooth quadric. Hence we see $U\subset Q$. Every curve $C\subset U$ is
a curve contained in $Q$. We have $h^0(U,\mathcal {O}_U(1)) = 5$, $h^0(U,\mathcal {O}_U(2)) =13$ and $h^0(U,\mathcal {O}_U(3)) = \chi (O_U) +\mathcal {O}_U(3)\cdot \mathcal {O}_U(4)/2 =25$ by the Riemann-Roch theorem.

\begin{remark}\label{a1}
Let $E\subset \mathbb {P}^4$ be a smooth and non-degenerate elliptic curve such that $\deg (E)
=6$. Since $h^1(\mathcal {I}_{E,\mathbb {P}^4}(1)) =1$, it is easy to check that
$h^1(\mathcal {I}_{E,\mathbb {P}^4}(2)) =0$. Castelnuovo-Mumford's lemma implies that
$\mathcal {I}_{E,\mathbb {P}^4}(3)$ is spanned. Hence for every smooth elliptic curve $E\subset Q$
with $\deg (E)=6$ and $E$ not contained in a hyperplane, the sheaf $\mathcal {I}_E(3)$ on $Q$ is spanned.
\end{remark}

Fix any such smooth elliptic curve $E\subset Q$ such
that $\deg (E)=6$ and $E$ is not contained in a hyperplane of $\mathbb {P}^4$. Since $h^0(E,\mathcal {O}_E(2)) =12 = h^0(\mathcal {O}_{Q}(2))-2$,
it is contained in some quadric hypersurface of $Q$. Let $U\subset Q$ be the smooth Del Pezzo surface of degree $4$ just introduced. We find smooth elliptic curves of degree $6$ inside $U$ by
taking the smooth elements of type $(3;1,1,1,0,0)$. Fix one such curve $E$. Since $h^0(U,\mathcal {O}_U(2))=13$
and $h^0(E,\mathcal {O}_E(2))=12$, $E$ is contained in at least one quadric hypersurface, $T$, of $U$. See $E$ as a curve of type $(3;1,1,1,0,0)$. $T$ has type $(6;2,2,2,2,2)$. Hence the curve $T-E$ is a curve
of type $(3;1,1,1,2,2)$. No cubic plane curve with at least two singular points is integral. We
see that $h^0(U,\mathcal {O}_U(T-U)) =1$ and that the unique curve in $\vert T-U\vert$ is the disjoint union of two lines $R_1$ and $R_2$ with $R_1$ the image of the strict transform in $W$
of the only conic containing the five points $P_1,\dots ,P_5$ (i.e. the only curve of type $(2;1,1,1,1,1)$), while $R_2$ is the image in $U$
of the strict transform of the line of $\mathbb {P}^2$ spanned by $P_4$ and $P_5$ (i.e. the only curve of type $(1;0,0,0,1,1)$).

\begin{remark}\label{a1.1}
Since $\mathcal {I}_E(3)$ is spanned the line bundle $\mathcal {L}:= \mathcal {O}_U(3)(-E)$
is spanned. Let $\alpha : U \to \mathbb {P}^r$, $r:= h^0(U,\mathcal {L})-1$, denote
the morphism induced by $\vert \mathcal {L}\vert$. Since $\mathcal {L}$ has type $(6;2,2,2,3,3)$, we have
$\mathcal {L}\cdot \mathcal {L}
 = 36 -4-4-4-9-9=6$ and $\mathcal {L}\cdot \omega _U^\vee = \mathcal {L}\cdot \mathcal {O}_U(1)
 = 18 -2-2-2-3-3 =6$. Riemann-Roch gives $\chi (\mathcal {L}) =1 +(6+6)/2 =7$. Since $\mathcal {L}$ is spanned, Serre duality
gives $h^2(\mathcal {L})=0$. Hence $r\ge 6$. Since $\alpha (U)$
 spans $\mathbb {P}^r$, we have $\deg (\alpha (U)) \ge 6$. Since
 $\mathcal {L}\cdot \mathcal {L} >0$, $\alpha (U)$ is a surface. Since $\deg (\alpha )\cdot \deg (\alpha (U)) = \mathcal {L}\cdot \mathcal {L} = 6$, we have $\deg (\alpha ) =1$, i.e. $\alpha$
 is birational onto its image.
\end{remark}

\begin{lemma}\label{a2}
If the pair $(O_1,O_2)$ is general in $U\times U$, then $\mathcal {I}_{E\cup \{O_1,O_2\},U}(3)$ is spanned.
\end{lemma}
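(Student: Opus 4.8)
The plan is to reduce the statement to a base-point-freeness question for the linear system $|\mathcal{L}|$ of Remark \ref{a1.1} and then read off the answer from the geometry of the morphism $\alpha$. Since a general pair $(O_1,O_2)$ avoids the curve $E$, the two points are disjoint from $E$, so $\mathcal{I}_{E\cup\{O_1,O_2\},U}(3)=\mathcal{L}\otimes\mathcal{I}_{\{O_1,O_2\}}$ with $\mathcal{L}=\mathcal{O}_U(3)(-E)$, and there is an exact sequence
$$0\to \mathcal{L}\otimes\mathcal{I}_{\{O_1,O_2\}}\to \mathcal{L}\to k(O_1)\oplus k(O_2)\to 0.$$
Global generation of the left-hand sheaf is equivalent to asking that the subsystem $H^0(\mathcal{L}\otimes\mathcal{I}_{\{O_1,O_2\}})\subset H^0(\mathcal{L})$ be base-point-free away from $O_1,O_2$ and generate the two-dimensional stalks $\mathcal{F}\otimes k(O_i)\cong T^\ast_{O_i}U\otimes\mathcal{L}(O_i)$, i.e.\ separate tangent directions at $O_1$ and at $O_2$. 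In cohomological terms this says that each length-three scheme $\{O_1,O_2,x\}$ and each length-four scheme $2O_1\sqcup O_2$, $O_1\sqcup 2O_2$ imposes independent conditions on $|\mathcal{L}|$.

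First I would pin down $|\mathcal{L}|$ exactly. Since $\mathcal{L}$ is spanned (hence nef) and $-K_U=\mathcal{O}_U(1)$ is ample, $\mathcal{L}-K_U$ is ample, so Kodaira vanishing gives $h^1(\mathcal{L})=h^2(\mathcal{L})=0$, whence $h^0(\mathcal{L})=\chi(\mathcal{L})=7$ and $r=6$. Thus $\alpha\colon U\to Y:=\alpha(U)\subset\mathbb{P}^6$ is birational onto a nondegenerate surface of degree $6$. A short intersection computation in $\Pic(U)$ identifies the contracted locus: $\mathcal{L}\cdot R_1=\mathcal{L}\cdot R_2=0$ for the two disjoint $(-1)$-curves $R_1$ (type $(2;1,1,1,1,1)$) and $R_2$ (type $(1;0,0,0,1,1)$) whose union is the divisor $T-E$ met above, while $\mathcal{L}$ is strictly positive on every other line of $U$; as $U$ is Del Pezzo it carries no $(-2)$-curves, so any curve with $\mathcal{L}\cdot C=0$ is one of its sixteen lines, leaving exactly $R_1,R_2$. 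Hence $\alpha$ is the contraction of $R_1$ and $R_2$, $Y$ is a smooth Del Pezzo surface of degree $6$ anticanonically embedded in $\mathbb{P}^6$, and $\mathcal{L}=\alpha^\ast(-K_Y)$. Because $\alpha$ is an isomorphism near the general points $O_1,O_2$, and a general secant of $Y$ misses the two fixed points $\alpha(R_1),\alpha(R_2)$, the global generation of $\mathcal{L}\otimes\mathcal{I}_{\{O_1,O_2\}}$ reduces to that of $(-K_Y)\otimes\mathcal{I}_{\{y_1,y_2\}}$ on $Y$, where $y_i=\alpha(O_i)$.

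The heart of the argument is then a secant-line statement on $Y$. Since $-K_Y$ is very ample and a Del Pezzo surface of degree $6$ is projectively normal and cut out by quadrics, any line meeting $Y$ in a scheme of length $\ge 3$ must lie on $Y$; as $Y$ carries only finitely many lines (six of them), a general secant $\langle y_1,y_2\rangle$ is none of these and so meets $Y$ in exactly the two reduced points $y_1,y_2$. Equivalently, for general $y_1,y_2$ the hyperplanes through $y_1$ and $y_2$ have no further base point and, by very ampleness, still separate the tangent directions at $y_1$ and at $y_2$ and can avoid any prescribed further point; this yields the required independent-conditions statements and hence the global generation of $(-K_Y)\otimes\mathcal{I}_{\{y_1,y_2\}}$. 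Pulling back through $\alpha$ gives the lemma.

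I expect the main obstacle to be precisely this last step: controlling the base locus of $|\mathcal{L}-O_1-O_2|$, that is, proving that a general secant of $Y$ is not a trisecant. Everything else—the disjointness $O_i\notin E$, the vanishing $h^1(\mathcal{L})=0$, the intersection numbers identifying the contracted curves, and the tangent separation at $O_1,O_2$—is routine once the identification $\mathcal{L}=\alpha^\ast(-K_Y)$ with $Y$ a degree-$6$ Del Pezzo surface is in place; the trisecant analysis is where the specific projective geometry of $Y$ (its being cut out by quadrics and containing only finitely many lines) must be used.
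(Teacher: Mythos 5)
Your proposal is correct, and it settles the crucial step by a genuinely different route than the paper, although the initial reduction is the same: as in Remark \ref{a1.1}, one must show that $\{O_1,O_2\}$ is the scheme-theoretic base locus of $\vert \mathcal{I}_{\{O_1,O_2\}}\otimes \mathcal{L}\vert$, equivalently that the line spanned by $\alpha(O_1),\alpha(O_2)$ meets $\alpha(U)$ in exactly those two reduced points. The paper gets this at once from the general position theorem (\cite{acgh}, pp.~112--113): in characteristic zero a general codimension-$2$ linear section of the nondegenerate surface $\alpha(U)$ is in linearly general position, so a general secant is neither trisecant nor tangent --- with no need to know $r$ or $\alpha(U)$ precisely (the paper only uses $r\ge 6$ and birationality). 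You instead identify $\alpha$ completely: Kodaira vanishing gives $h^0(\mathcal{L})=7$, hence $r=6$; the intersection numbers $\mathcal{L}\cdot R_1=\mathcal{L}\cdot R_2=0$, together with Hodge index and the adjunction bound showing that the only negative curves on a Del Pezzo surface are its lines, prove that $\alpha$ is exactly the blow-down of $R_1, R_2$ followed by the anticanonical embedding of the resulting smooth sextic Del Pezzo surface $Y\subset \mathbb{P}^6$ --- consistent with $(6;2,2,2,3,3)=(3;1,1,1,1,1)+(3;1,1,1,2,2)$, i.e.\ $\mathcal{L}=\alpha^\ast(-K_Y)$ --- and then the classical facts that $Y$ is projectively normal, cut out by quadrics, and contains only six lines exclude trisecant and tangent secant lines through a general pair. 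Your route costs more verification but is more explicit and avoids the monodromy/general-position machinery; it even yields the stronger statement that \emph{every} secant of $Y$ other than its six lines meets $Y$ in a reduced length-two scheme, whereas the paper's argument is shorter and works for any nondegenerate birational image. Two minor streamlinings: your requirement that the secant miss $\alpha(R_1),\alpha(R_2)$ is automatic once the secant meets $Y$ only in $y_1,y_2$; and the tangent-direction separation at $O_1,O_2$ need not be argued separately, since the reduced length-two scheme-theoretic intersection already forces the base ideal of the subsystem to equal $\mathcal{I}_{\{y_1,y_2\}}$, which is precisely global generation.
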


\begin{proof}
Take $\mathcal {L}$, $\alpha$ and $\mathbb {P}^r$ as in Remark \ref{a1.1}. Lemma \ref{a2} just says that $\{O_1,O_2\}$ is the scheme-theoretic base locus, $\beta$, of
the linear system $\vert \mathcal {I}_{\{O_1,O_2\}}\otimes \mathcal {L}\vert$ on $U$. Since $\alpha$ is birational onto its image and $O_1,O_2$ are general, we have $\alpha (O_1) \ne \alpha (O_2)$
and $\alpha ^{-1}(\alpha (O_i)) =O_i$ as schemes. In characteristic zero a general codimension $2$ section of the non-degenerate surface $\alpha (U)$ is in
linearly general position (\cite{acgh}, pages 112--113). Since the pair $(\alpha (O_1),\alpha (O_2))$
is general in $\alpha (U)\times \alpha (U)$, we get that $\{\alpha (O_1),\alpha (O_2)\}$ is
the scheme-theoretic intersection of $\alpha (U)$ with the line of $\mathbb {P}^r$ spanned by $\alpha (O_1)$ and $\alpha (O_2)$. Hence $\beta = \{O_1,O_2\}$.
\end{proof}

Fix $E\subset U \subset Q$ as above
and a general $(O_1,O_2)\in U\times U$. The union of the set of all lines containing $O_i$
and contained in $Q$
is the quadric cone $(T_{O_i}Q)\cap Q$. Hence there is a line $D_i\subset Q$
containing $O_i$ and intersecting $E$. For general $(O_1,O_2)$ we may assume $D_1\cap D_2 =\emptyset$ and that $D_i$ intersects quasi-transversally $E$ and only at the point $A_i:= D_i\cap E$. Hence $Y:= E\cup D_1\cup D_2$ is a nodal and connected curve of degree $8$ inside $Q$ with $p_a(Y)=1$.

The reduced curve $Y$ is locally a complete intersection and hence
its normal sheaf $\Nn_{Y}$ is a rank 2 vector bundle of degree $\deg (TQ\vert_Y)
= 24$. Since $Y$ has nodes only at $A_1$ and $A_2$, the vector bundles $\Nn_Y\vert_E$
are obtained from $\Nn_E$ making two positive elementary transformations. Since $h^1(E,\Nn_E)=0$,
we have $h^1(E,\Nn_Y\vert_E)=0$. Each normal bundle $\Nn_{D_i}$ is a direct sum
of a degree $0$ line bundle and a degree $1$ line bundle. Hence for every vector bundle $\Ff$ on $D_i$ obtained from $\Nn_{D_i}$ making one negative elementary transformation has no factor of degree $\le -2$. Hence
$h^1(D_i,\Ff)=0$. Hence $h^1(D_1\cup D_2,\Rr)=0$, where $\Rr$ is the vector bundle
obtained from $\Nn_{D_1\cup D_2}$ making the two negative elementary transformations
at $A_1$ and $A_2$ associated to the tangent lines of $E$ at these points.
Hence $h^1(Y,\Nn_Y)=0$ and $Y$ is smoothable inside $Q$ (use \cite{hh}, Theorem 4.1, for $Q$ instead of the smooth 3-fold $\mathbb {P}^3$). We get that the nearby smooth curves, $C$, have $h^1(\Nn_C)=0$ and they
form a $24$-dimensional family smooth at $C$. By semicontinuity the general such $C$
has also $h^1(\mathcal {I}_C(3))=0$. Since in a flat family $\{C_\lambda \}$ of family
with constant $h^1(\mathcal {I}_{C_\lambda }(3))$, the condition ``$\mathcal {I}_{C_\lambda }(3)$
is spanned'' is an open condition, to find a degree $8$ smooth elliptic
curve $C\subset Q$ with $\mathcal {I}_C(3)$ spanned (and hence to complete the case $c_1=3$, $c_2=8$),
it is sufficient to prove that $\mathcal {I}_Y(3)$ is spanned.

\begin{lemma}\label{a3}
The sheaf  $\mathcal {I}_Y(3)$ is spanned.
\end{lemma}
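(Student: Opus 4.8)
The plan is to prove that $\mathcal{I}_Y(3)$ is globally generated by checking generation of the stalk at every point $x\in Q$, splitting $Q$ into four loci: $x\notin Y$; $x\in E\setminus\{A_1,A_2\}$; $x\in(D_1\cup D_2)\setminus\{A_1,A_2\}$; and $x\in\{A_1,A_2\}$. The two inputs are Remark \ref{a1}, giving that $\mathcal{I}_E(3)$ is spanned on $Q$, and Lemma \ref{a2}, controlling cubics through $E$ at the two points $O_1,O_2$ on $U$. To organise the gluing I would use the Mayer--Vietoris sequence for $Y=E\cup D$ with $D:=D_1\sqcup D_2$, whose scheme-theoretic intersection with $E$ is the reduced pair $\{A_1,A_2\}$ (the nodes):
\[
0\to\mathcal{I}_Y(3)\to\mathcal{I}_E(3)\oplus\mathcal{I}_D(3)\to\mathcal{I}_{\{A_1,A_2\}}(3)\to 0,
\]
noting that $\mathcal{I}_D(3)$ is spanned because $D$ is a pair of skew lines.

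For the first two loci one has $\mathcal{I}_Y(3)=\mathcal{I}_E(3)$ locally, so it suffices to show that the subspace $H^0(\mathcal{I}_Y(3))\subset H^0(\mathcal{I}_E(3))$ of cubics through $E$ that also contain both lines still generates $\mathcal{I}_E(3)$ at such $x$. First I would show that the restriction
\[
H^0(\mathcal{I}_E(3))\to H^0(\mathcal{O}_{D_1}(3)(-A_1))\oplus H^0(\mathcal{O}_{D_2}(3)(-A_2))
\]
is surjective, so that containing $D_1\cup D_2$ imposes exactly the expected $6$ conditions (consistent with $h^0(\mathcal{I}_Y(3))=6$ forced by $h^0(\mathcal{E})=7$ in Proposition \ref{a00}); the genericity of $(O_1,O_2)$, hence of $D_1,D_2$, is what makes these conditions independent. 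Then, since $x\notin D_1\cup D_2$, I would argue that these line-conditions are independent from the two conormal conditions of $\mathcal{I}_E(3)$ at $x$, so that generation of $\mathcal{I}_E(3)$ at $x$ (Remark \ref{a1}) survives the restriction to $H^0(\mathcal{I}_Y(3))$.

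The hard part will be the third locus, the points lying on the lines, together with the nodes. A soft Castelnuovo--Mumford argument is not available: one has $h^0(\mathcal{O}_Y(2))=16>14=h^0(\mathcal{O}_Q(2))$, whence $h^1(\mathcal{I}_Y(2))\neq 0$ and $\mathcal{I}_Y$ is not $3$-regular, so generation along the $D_i$ must be produced geometrically. For $x\in D_i\setminus\{A_i\}$ the fibre $\mathcal{I}_Y(3)\otimes k(x)$ is the rank-two conormal $\mathcal{N}^\vee_{D_i/Q}(3)|_x$, and I must exhibit cubics through all of $Y$ spanning both conormal directions. Here I would use that $D_i$ joins $O_i\in U$ to $A_i\in E\subset U$ and bring in Lemma \ref{a2}: the spanning of $\mathcal{I}_{E\cup\{O_1,O_2\},U}(3)$ lifts, by projective normality of the del Pezzo surface $U$, to cubics on $Q$ through $E$ with prescribed behaviour at $O_i$; combined with cubics vanishing on the whole line $D_i$ this separates the two normal directions at $x$, the quadric-cone $(T_{O_i}Q)\cap Q$ of lines through $O_i$ supplying the needed freedom. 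Finally the nodes are treated through the displayed sequence, gluing a generator of the $E$-branch to one of the $D_i$-branch once $H^0(\mathcal{I}_E(3))\oplus H^0(\mathcal{I}_D(3))\to H^0(\mathcal{I}_{\{A_1,A_2\}}(3))$ is seen to be surjective. I expect essentially all the difficulty to sit in this line-and-node step: showing that vanishing on all of $Y$ is simultaneously compatible with generating each conormal direction is exactly what the general position of $(O_1,O_2)$ in Lemma \ref{a2} is there to guarantee.
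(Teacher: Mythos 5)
There is a genuine gap, and it sits exactly where you predicted the difficulty would be: your mechanism for generation along $D_1\cup D_2$ does not produce sections of $\mathcal{I}_Y(3)$. Sections obtained by lifting the system $\vert \mathcal{I}_{E\cup\{O_1,O_2\},U}(3)\vert$ of Lemma~\ref{a2} from $U$ to $Q$ vanish on $E$ and at the two points $O_1,O_2$, but a cubic vanishing at only two points of a line need not contain that line; so these lifts are not elements of $H^0(\mathcal{I}_Y(3))$ and cannot be used to span the conormal directions of $D_i$ at a point $x\in D_i$. You would have to correct each lift so that it vanishes on all of $D_1\cup D_2$ while retaining control of its differential at $x$, and no device for this is offered. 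The missing idea (which is how the paper argues) is to use the hyperplane $H=\langle D_1,D_2\rangle$: for general data $Q_2:=Q\cap H$ is a smooth quadric surface, and $U\cup Q_2$ is itself a member of $\vert\mathcal{I}_Y(3)\vert$, so the base locus is contained in $U\cup Q_2$; since $h^1(\mathcal{I}_U(3))=h^1(\mathcal{I}_{Q_2}(3))=0$, the restrictions of the system to $U$ and to $Q_2$ are complete, Lemma~\ref{a2} handles $U$, and on $Q_2$ one writes $Y\cap Q_2=D_1\cup D_2\cup Z$ with $\deg(Z)=4$, uses $\mathcal{I}_{D_1\cup D_2,Q_2}(3)\cong\mathcal{O}_{Q_2}(1,3)$, and reduces generation along the lines to the fact that no line contains a degree $4$ subscheme of $E$ (because $\mathcal{I}_E(3)$ is spanned). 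Your proposal never introduces $Q_2$, and without it there is no handle on what cubics through $Y$ do along the two lines.

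A second, smaller gap: the surjectivity of $H^0(\mathcal{I}_E(3))\to H^0(\mathcal{O}_{D_1}(3)(-A_1))\oplus H^0(\mathcal{O}_{D_2}(3)(-A_2))$ is not a soft genericity statement. Its kernel is $H^0(\mathcal{I}_Y(3))$, and since $h^0(\mathcal{I}_E(3))=12$ and $\chi(\mathcal{I}_Y(3))=6$, surjectivity is \emph{equivalent} to $h^1(\mathcal{I}_Y(3))=0$; that is precisely the paper's Lemma~\ref{a4}, whose proof again goes through $Q_2$ and requires excluding trisecant lines of $E$ (a degree $3$ subscheme of $Z$ on a line of the ruling containing $D_1,D_2$) --- genericity of $(O_1,O_2)$ alone does not give this, because the $D_i$ are constrained to meet $E$. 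Finally, Proposition~\ref{a00} cannot be invoked even for orientation, since in the paper it is a consequence of the present lemma; any reliance on it here would be circular.
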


\begin{proof}
Let $\mathcal {B}$ denote the scheme-theoretic base-locus of
the linear system $\vert \mathcal {I}_Y(3)\vert$ on $Q$. We need to prove that $\mathcal {B} = Y$
as schemes. Since $Y\cap U = E\cup \{O_1,O_2\}$ as schemes and $h^1(\mathcal {I}_U(3)) =h^1(\mathcal {O}_{Q}(1))=0$,
$\mathcal {B} \vert_U$ is the scheme-theoretic base locus of the linear system $\vert \mathcal {I}_{\{O_1,O_2\}}\otimes \mathcal {L}\vert $ on $U$.
Lemma \ref{a2} gives $\mathcal {B}\vert_U = E\cup \{O_1,O_2\}$ as schemes. Let
$H\subset \mathbb {P}^4$ be the hyperplane spanned by the lines $D_1$ and $D_2$.
For general $O_1, O_2$ we may assume that $Q_2:= Q\cap H$ is a smooth quadric surface.
Since $U\cup Q_2\in \vert \mathcal {I}_Y(3)\vert$, $h^1(\mathcal {I}_{Q_2}(3))
= h^1(\mathcal {O}_{Q}(2)) =0$ and $\mathcal {B}\vert_U = E\cup \{O_1,O_2\}$, to prove
the lemma it is sufficient to prove that $D_1\cup D_2\cup (E\cap Q_2)$ is the scheme
theoretic base locus of the linear system $\vert \mathcal {I}_{D_1\cup D_2\cup (E\cap Q_2),Q_2}(3)\vert$ on $Q_2$. We call $(1,0)$ the system of lines of $Q_2$ contained $D_1$ and $D_2$.
Since $Y$ is nodal, $D_1\cup D_2\cup (E\cap Q_2) = D_1\cup D_2\cup Z$ with $\deg (Z) =4$
and $E\cap H = \{A_1,A_2\}\cup Z$. Since $\mathcal {I}_{D_1\cup D_2,Q_2}(3)
\cong \mathcal {O}_{Q_2}(1,3)$, it is sufficient that $Z$ is not contained in a line.
Since $\mathcal {I}_E(3)$ is spanned, no line contains a degree $4$ subscheme of $E$.
\end{proof}

\begin{lemma}\label{a4}
Let $Y = E\cup D_1\cup D_2$ as above. For general $Y$ we have
$h^1(\mathcal {I}_Y(3))=0$.
\end{lemma}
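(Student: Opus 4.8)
The plan is to residuate $Y$ against the smooth quadric surface $Q_2 = Q\cap H$ introduced in the proof of Lemma \ref{a3}, where $H\subset \mathbb{P}^4$ is the hyperplane spanned by the disjoint lines $D_1,D_2$. This splits the computation of $h^1(\mathcal{I}_Y(3))$ into one term on $Q$ and one on $Q_2$, and I will show each vanishes.

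First I would write the residual exact sequence attached to the divisor $Q_2\in|\mathcal{O}_Q(1)|$. Since $D_1,D_2\subset Q_2$ while $E$ is non-degenerate and hence not contained in $H$, a local computation at the two nodes $A_1,A_2$ shows that the residual scheme of $Y$ with respect to $Q_2$ is exactly $E$ (with no embedded points) and that the trace of $Y$ on $Q_2$ is $D_1\cup D_2\cup Z$, where $Z=(E\cap H)\setminus\{A_1,A_2\}$ has degree $4$ as in Lemma \ref{a3}. This yields
$$0\to \mathcal{I}_E(2)\to \mathcal{I}_Y(3)\to \mathcal{I}_{D_1\cup D_2\cup Z,\,Q_2}(3)\to 0,$$
so by the associated long exact sequence it suffices to prove $h^1(\mathcal{I}_E(2))=0$ and $h^1(\mathcal{I}_{D_1\cup D_2\cup Z,Q_2}(3))=0$.

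For the first vanishing I would compare $Q$ with $\mathbb{P}^4$. Using $\mathcal{I}_{Q,\mathbb{P}^4}=\mathcal{O}_{\mathbb{P}^4}(-2)$ one gets the sequence
$$0\to \mathcal{O}_{\mathbb{P}^4}\to \mathcal{I}_{E,\mathbb{P}^4}(2)\to \mathcal{I}_{E,Q}(2)\to 0,$$
and since $H^1(\mathcal{O}_{\mathbb{P}^4})=H^2(\mathcal{O}_{\mathbb{P}^4})=0$ this gives $H^1(\mathcal{I}_{E,Q}(2))\cong H^1(\mathcal{I}_{E,\mathbb{P}^4}(2))$, which is $0$ by Remark \ref{a1}. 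For the second, since $D_1\cup D_2$ is a divisor of type $(2,0)$ on $Q_2\cong\mathbb{P}^1\times\mathbb{P}^1$ disjoint from $Z$, one has $\mathcal{I}_{D_1\cup D_2\cup Z,Q_2}(3)\cong\mathcal{I}_{Z,Q_2}(1,3)$, exactly the identification made in Lemma \ref{a3}. Thus the remaining task is to show that the four points of $Z$ impose independent conditions on $|\mathcal{O}_{Q_2}(1,3)|$.

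This last step is the crux, and is where the generality of $Y$ is used. Because $\mathcal{O}(1,3)$ restricts to degree $1$ on every ruling of type $(0,1)$, the only way four distinct points can fail to impose independent conditions is that three of them lie on a single $(0,1)$-ruling, i.e. on a line of $Q_2\subset H$. Hence the hard part is to exclude three collinear points of $Z$. Since $Z\subset E\cap H$ and $E\subset\mathbb{P}^4$ is smooth and non-degenerate, a dimension count shows that $E$ has only finitely many trisecant lines (the expected dimension of trisecants of a curve in $\mathbb{P}^4$ is $0$); equivalently, a general hyperplane section of $E$ lies in linearly general position. For general $(O_1,O_2)$ the hyperplane $H=\langle D_1,D_2\rangle$ therefore contains none of these finitely many trisecants, so no three points of $E\cap H$, and in particular no three points of $Z$, are collinear. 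This gives $h^1(\mathcal{I}_{Z,Q_2}(1,3))=0$, and feeding the two vanishings into the long exact sequence yields $h^1(\mathcal{I}_Y(3))=0$.
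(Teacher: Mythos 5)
Your argument follows the paper's own proof almost step for step: the same Castelnuovo residual sequence with respect to the quadric $Q' = Q\cap H$ spanned by $D_1,D_2$, the same two vanishings $h^1(\mathcal{I}_E(2))=0$ (via Remark \ref{a1}) and $h^1(Q',\mathcal{I}_{Z,Q'}(1,3))=0$, and the same final reduction: a failure of the four points of $Z$ to impose independent conditions on $|\mathcal{O}_{Q'}(1,3)|$ forces a degree-$3$ subscheme of $Z$ on a ruling line of $Q'$, i.e.\ a trisecant line of $E$. (Your labelling of the bad ruling as $(0,1)$ versus the paper's $(1,0)$ is just a convention discrepancy and is immaterial, since either ruling consists of honest lines of $\mathbb{P}^4$, hence of trisecants of $E$.)

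There is, however, one genuine soft spot at exactly the crux you identify. First, ``the expected dimension of trisecants of a curve in $\mathbb{P}^4$ is $0$'' is a heuristic, not a proof: expected dimension does not bound the actual dimension from above, and special curves can carry excess trisecant families, so finiteness for this particular $E$ needs an argument. Two repairs are available: (a) the paper only needs that $E$ has no two-dimensional family of trisecants, which holds for any smooth non-degenerate curve by the trisecant lemma (a general secant is not a trisecant, so the trisecant locus has dimension $\le 1$ inside the two-dimensional secant family); (b) for this specific $E$, finiteness is in fact easy and you could have used the geometry at hand: a trisecant meets each of the two quadrics cutting out the del Pezzo surface $U \supset E$ in at least three points, hence lies on both, hence is one of the finitely many ($16$) lines of $U$. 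Second, and relatedly, $H = \langle D_1,D_2\rangle$ is \emph{not} a general hyperplane --- it is constrained to contain $A_1,A_2\in E$ --- so the appeal to linearly general position of a general hyperplane section does not apply directly. What saves the day is the dimension count your ``for general $(O_1,O_2)$'' sentence gestures at: hyperplanes containing one of a $d$-dimensional family of trisecants form a family of dimension at most $d+2$ in the $4$-dimensional dual space, while the hyperplanes $\langle D_1,D_2\rangle$ sweep out a four-dimensional family as $(O_1,O_2)$ varies (the fibers of $(O_1,O_2)\mapsto H$ are finite), so for $d\le 1$ a general choice avoids all bad hyperplanes. This is precisely the paper's parenthetical ``move $D_1$ and $D_2$,'' and spelling it out is what makes your last step rigorous.
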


\begin{proof}
Let $M\subset \mathbb {P}^4$ be the hyperplane spanned by the lines $D_1$ and $D_2$. Set $Q':= Q\cap M$. Look at the
Castelnuovo exact sequence
\begin{equation}\label{eqc1}
0 \to \mathcal {I}_E(2) \to \mathcal {I}_Y(3) \to \mathcal {I}_{D_1\cup D_2\cap (E\cap Q'),Q'}(3) \to 0
\end{equation}
Since $h^1(\mathcal {I}_E(2))=0$ (Lemma \ref{a1}), the exact sequence (\ref{eqc1}) shows that it is sufficient to prove
that $h^1(Q',\mathcal {I}_{D_1\cup D_2\cap (E\cap Q'),Q'}(3))=0$. Since the integral quadric surface $Q'$ contains
$D_1$ and $D_2$ and $D_1\cap D_2=\emptyset$, $Q'$ is a smooth quadric surface. Call
$(1,0)$ the ruling of $Q'$ containing $D_1$ and $D_2$. For general $D_1$ and $D_2$ the hyperplane $M$ is
not tangent to $E$ either at $A_1$ or at $A_2$. Hence the scheme $E\cap Q'$
is the disjoint union of
$\{A_1,A_2\}$ (with its reduced structure) and a degree $4$ scheme $Z$. Since $D_i\cap E = \{A_i\}$, we
have $Z\cap (D_1\cup D_2)=\emptyset$. Hence it is sufficient to prove $h^1(Q',\mathcal {I}_{Z,Q'}(1,3)) =0$. Since
$\deg (Z) = 4$, it is easy to check that $h^1(Q',\mathcal {I}_{Z,Q'}(1,3))>0$ if and only if
there is a scheme $B\subset Z$ with $\deg (B)=3$ and $B$ in a line of type $(1,0)$ on $Q'$. To exclude the existence
of such a scheme $B$ it is sufficient to find $E$ without a two-dimensional family of trisecant lines (move $D_1$ and $D_2$).
\end{proof}

\vspace{0.3cm}

\quad {\emph {Proof of Proposition \ref{a00}.}} By the Serre correspondence it is sufficient to find a smooth
elliptic curve $C\subset Q$ such that $\mathcal {I}_C(3)$ is spanned, $h^1(\mathcal {I}_C(t))=0$ for all $t\ge 3$
and $h^0(\mathcal {I}_C(2)) =0$ (indeed, the last condition
implies $h^0(\mathcal {I}_C(t))=0$ for all $t \le 1$ and hence $h^1(\mathcal {I}_C(2)) =2$, $h^1(\mathcal {I}_C(1))=3$ and $h^1(\mathcal {I}_C(t))=0$ for all $t\le 0$ by the Riemann-Roch theorem). By
semicontinuity it is sufficient to find $Y$ with the same properties. There is $Y$ with $\mathcal {I}_Y(3)$ spanned
(Lemma \ref{a3}) and with $h^1(\mathcal {I}_Y(3))=0$ (Lemma \ref{a3}). Castelnuovo-Mumford's lemma implies
$h^1(\mathcal {I}_Y(t))=0$ for all $t\ge 4$. Since $Y =E\cup D_1\cup D_2$ with $h^0(\mathcal {I}_E(2))=0$
(Lemma \ref{a1}), we have $h^0(\mathcal {I}_Y(2))=0$.\qed

\begin{remark}
For a vector bundle $\Ee$ in $\mathfrak{M}(3,8)$ that fits into the sequence:
$$0\rightarrow \Oo_Q(1) \rightarrow \Ee \rightarrow \Ii_Z(2)\rightarrow 0,$$
where $Z$ is the disjoint union of 4 lines, we can easily compute that $h^2(\mathcal{E}nd(\Ee))=0$. It implies that $\Ee$ is a smooth point of $\mathfrak{M}(3,8)$ and the dimension of $\mathfrak{M}(3,8)$ is 18 since such bundles form a 15-dimensional subvariety of $\mathfrak{M}(3,8)$ and $\chi(\mathcal{E}nd (\Ee))=-17$.
\end{remark}
\section{Case $(c_1, c_2)=(3,9)$}
Assume that $c_2(\Ee)=9$. Again let $\Ff=\Ee(-2)$. We can compute:\\
$$
\left\{
       \begin{array}{ll}
       H^1(\Ff(t))=0 \text{  for  }t\leq -2\\
       \chi(\Ff\otimes \Ff^{\vee})=-23\\
       \chi(\Ff)=-\chi (\Ff(-2))=-4\\
       \chi(\Ff(1))=-\chi(\Ff(-3))=-4\\
       \chi(\Ff(-1))=0\\
       \chi(\Ff(2))=4
                                           \end{array}
                             \right. $$
Then the cohomology table for $\Ff=\Ee(-2)$ is as follows:
\begin{equation}\label{t2}\begin{tabular}{|c|c|c|c|c|c|}
  \hline
  0 & 0 & 0 & 0 & 0 & 0 \\
  4 & 4 & c & 0 & 0 & 0 \\
  0 & 0 & c & 4 & 4 & $b$ \\
  0 & 0 & 0 & 0 & 0 & $a$ \\
   \hline
   \hline
  -3& -2& -1 & 0 & 1 & 2 \\
  \hline
\end{tabular}\end{equation}
with $a-b=4$. If $\{k_1\leq \cdots \leq k_4\}$ is the spectrum of $\Ff$, then the possibility is either $\{-2,-1,-1,0\}$ or $\{ -1,-1,-1,-1\}$. It implies that $H^1(\Ee(-3))=0$ or $1$, i.e. $c=0$ or $1$ in the table. Thus $h^0(\Oo_C)$ is either 1 or 2. Since $\omega_C\simeq \Oo_C$, so $C$ is either an irreducible elliptic curve of degree 9 or consists of two irreducible elliptic curves. Note that $C$ cannot have a plane cubic curve as its component.
\begin{lemma}
$C$ is an irreducible smooth elliptic curve of degree 9.
\end{lemma}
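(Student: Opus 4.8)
The plan is to rule out the reducible possibility for $C$, namely that $C = C_1 \sqcup C_2$ is a disjoint union of two irreducible smooth elliptic curves (each with trivial canonical bundle, hence none a plane cubic, as already noted). This corresponds precisely to the case $c=1$ in the cohomology table (\ref{t2}), where $h^0(\Oo_C)=2$. So the whole lemma reduces to excluding $c=1$, equivalently to showing that no such disjoint union of two elliptic curves of total degree $9$ can arise as the zero locus associated to a globally generated, stable $\Ee$ with $(c_1,c_2)=(3,9)$.

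\emph{First} I would fix the degrees $d_1 = \deg(C_1)$ and $d_2 = \deg(C_2)$ with $d_1 + d_2 = 9$ and $d_i \geq 3$ (a non-degenerate elliptic curve in $\PP^4$ has degree at least $3$, and degree $3$ would force a plane cubic, already excluded, so in fact $d_i \geq 4$). This leaves essentially $(d_1,d_2) = (4,5)$ as the only splitting into two elliptic curves neither of which is a plane cubic. \emph{Next} I would exploit that $\Ii_C(3)$ must be globally generated and that $h^1(\Ii_C(t)) = 0$ for $t \geq 3$ (forced by the cohomology table). The key constraint is that $\Ii_C(3)$ spanned forces $C$ to impose independent conditions appropriately; I would compute $h^0(\Ii_C(2))$ and $h^0(\Ii_C(3))$ via the exact sequence
$$0 \to \Ii_C(t) \to \Oo_Q(t) \to \Oo_C(t) \to 0,$$
using $h^0(\Oo_Q(2)) = 13$, $h^0(\Oo_Q(3)) = 25$ together with $h^0(C,\Oo_C(t)) = td_i$ on each component (Riemann--Roch on an elliptic curve, for $t \geq 1$). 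Comparing these dimensions against the spectrum constraint $\{-1,-1,-1,-1\}$ that characterizes $c=1$ should produce the contradiction.

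\emph{The main obstacle} I expect is the geometric rigidity: I need to show that a degree-$4$ and a degree-$5$ elliptic curve, disjoint inside $Q$, cannot simultaneously satisfy ``$\Ii_C(3)$ globally generated'' with the exact cohomology forced by the spectrum $\{-1,-1,-1,-1\}$. The cleanest route is probably to argue that $h^0(\Ii_{C_1 \sqcup C_2}(2))$ would be forced to be nonzero (so that $C$ lies on a quadric hypersurface section), which contradicts the $t=2$ entry of the cohomology table where $h^0(\Ff(2)) - \dots$ pins down $h^0(\Ii_C(2)) = 0$; indeed from table (\ref{t2}) the stability and the spectrum give $h^0(\Ii_C(2)) = 0$. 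So I would show that any disjoint union of a quartic and a quintic elliptic curve in $Q$ must lie on a common quadric hypersurface section of $Q$: a degree-$5$ elliptic curve is contained in a pencil of quadrics already in its span, and adding the quartic one counts conditions to see $h^0(\Ii_C(2)) > 0$, contradicting the table.

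\emph{Finally}, having excluded $c=1$, only $c=0$ survives, giving $h^0(\Oo_C)=1$, so $C$ is connected; combined with smoothness (from the Serre correspondence, the associated $C$ is smooth) and $\omega_C \simeq \Oo_C$, this forces $C$ to be an irreducible smooth elliptic curve of degree $9$, which is exactly the assertion. I would close by remarking that this parallels the degree-$8$ analysis, where connectedness came directly from $H^1(\Ee(-3)) = 0$; here the extra spectrum possibility $\{-2,-1,-1,0\}$ versus $\{-1,-1,-1,-1\}$ is what must be resolved, and resolving it in favor of the former is the content of the lemma.
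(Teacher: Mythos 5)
Your reduction to the case $C = C_1 \sqcup C_2$ with $(\deg C_1,\deg C_2)=(4,5)$ is correct and agrees with the paper, but the step you build everything on --- that such a disjoint pair must lie on a common quadric section of $Q$, so that $h^0(\Ii_C(2))>0$ --- is not just unproven: it is false, so the contradiction you aim for can never materialize. First, your numbers are off: $h^0(\Oo_Q(2))=14$ and $h^0(\Oo_Q(3))=30$ (the values $13$ and $25$ are those of the del Pezzo surface $U$ of Section 5, not of $Q$). Second, the quintic $C_2$ cannot lie in a hyperplane section of $Q$ (this is the bidegree computation in the unstable case of Section 4), so it is a non-degenerate elliptic normal quintic: it lies on a $5$-dimensional space of quadrics of $\PP^4$, hence on a $4$-dimensional space of quadric sections of $Q$, and these cut out $C_2$ scheme-theoretically --- there is no ``pencil in its span''. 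Containing $C_1$ then imposes up to $h^0(\Oo_{C_1}(2))=8$ further conditions on that $4$-dimensional space, so the count goes the wrong way: the expected dimension of $H^0(\Ii_C(2))$ is zero. In fact $h^0(\Ii_C(2))=0$ always holds here: suppose $W=Q\cap Q'$ contained $C_1\sqcup C_2$ and set $H=\langle C_1\rangle\cong\PP^3$, $J=Q\cap H$. If $J\not\subset Q'$, then $W\cap H=J\cap Q'$ is a divisor in $\vert \Oo_J(2)\vert$ containing $C_1\in\vert \Oo_J(2)\vert$, hence equal to $C_1$; but then the nonempty scheme $C_2\cap H\subset W\cap H=C_1$ contradicts $C_1\cap C_2=\emptyset$. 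If instead $J\subset Q'$, then $Q'$ lies in the ideal $(q,h)$ of $J$, which forces $W=J\cup J'$ to be a union of two hyperplane sections of $Q$, making the irreducible quintic $C_2$ degenerate --- again impossible.

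The paper's proof instead runs through the quartic component and uses the one hypothesis actually available, global generation: $C_1$ spans only a $\PP^3$, lies on the quadric surface $J=Q\cap\langle C_1\rangle$ (possibly a cone), and is a complete intersection of $J$ with a quadric; hence every cubic section of $J$ through $C_1$ is that quadric plus a hyperplane section, so global generation of $\Ii_C(3)$ restricted to $J$ forces $\Ii_{J\cap C_2,J}(1)$ to be spanned --- absurd for the length-$5$ scheme $J\cap C_2$, since a base-point set of a system of hyperplane sections of $J$ without fixed component has length at most $\Oo_J(1)\cdot\Oo_J(1)=2$. Two further cautions about your write-up. The table entry $h^0(\Ee(-1))=h^0(\Ii_C(2))=0$ you invoke is not a consequence of stability (stability only gives $h^0(\Ee(-2))=0$; the remark closing Section 6 exhibits stable bundles in $\mathfrak{M}(3,9)$ with $h^0(\Ee(-1))=1$, namely extensions by $\Ii_Z(2)$ with $Z$ five disjoint lines), so even your intended target of contradiction would need separate justification. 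And your final sentence resolves the spectrum the wrong way: irreducibility of $C$ corresponds to $c=0$, i.e.\ to the spectrum $\{-1,-1,-1,-1\}$, the latter of your two possibilities, not the former.
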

\begin{proof}
Assume that $C=C_1\sqcup C_2$ with $C_i$ smooth elliptic, $\deg (C_1) =4$ and $\deg (C_2) = 5$.
The curve $C_1$ is a contained in a hyperplane section $J$, which may be a cone.
Even if $J$ is not smooth, $C_1$ is a complete intersection of $J$ with a quadric surface
in the linear span $\langle J\rangle \cong \mathbb {P}^3$. So in $J\cap (C_1\cup C_2)$
we have $C_1$ and a degree $5$ scheme $J\cap C_2$; the only possibility
to have $\mathcal {I}_{C_1\cup C_2}(3)$ spanned is that $\mathcal {I}_{J\cap C_2,J}(1)$
is spanned, which is absurd.
\end{proof}
In particular, we have $c=0$ in the cohomology table of $\Ff$.
Similarly as in the proposition \ref{monad}, we can show that $\Ee$ is the cohomology of a monad:
$$0\rightarrow \Oo_Q(1)^{\oplus 4} \rightarrow \Sigma(1)^{\oplus 5} \rightarrow \Oo_Q(2)^{\oplus 4} \rightarrow 0.$$

Now we will prove the existence of a smooth elliptic curve $C\subset Q$ such that $\deg (C) = 9$
and $h^1(\mathcal {I}_C(3))=0$. Since $3\cdot 9 = 27 = h^0(\mathcal {O}_{Q}(3))-3$, we have
$h^1(\mathcal {I}_C(3))=0$ if and only if $h^0(\mathcal {I}_C(3))=3$. The latter condition obviously implies $h^0(\mathcal {I}_C(2)) =0$. Hence proving the existence of $C$ also proves the existence
of a rank two vector bundle $\Ee$ on $Q$ with $(c_1, c_2)=(3,9)$,
$h^0(\Ee)=4$, $h^1(\Ee)=0$, $h^0(\Ee(-1))=0$ (and hence $\Ee$ is stable).

\begin{lemma}\label{b1}
There is a smooth elliptic curve $A\subset Q$ such that $\deg (A) =7$ and $h^0(\mathcal {I}_A(2))=0$.
\end{lemma}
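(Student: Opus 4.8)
The plan is to produce $A$ by smoothing a suitable nodal reducible curve, exactly in the style of the case $(c_1,c_2)=(3,8)$. I would start from the same smooth, non-degenerate, degree $6$ elliptic curve $E\subset U\subset Q$ used in Section 5 and adjoin a single line. Concretely, fix a general point $A_1\in E$ and a general line $D\subset Q$ with $A_1\in D$ and $D\cap E=\{A_1\}$, meeting quasi-transversally; such lines exist because the lines of $Q$ through $A_1$ sweep out the quadric cone $(T_{A_1}Q)\cap Q$. Then $Y:=E\cup D$ is a connected nodal curve of degree $7$ with $p_a(Y)=1$. Once we know that $\mathcal{I}_Y(2)$ has no sections and that $Y$ is smoothable inside $Q$, a general smoothing $A$ of $Y$ is a smooth elliptic curve of degree $7$, and $h^0(\mathcal{I}_A(2))=0$ follows by upper semicontinuity of $h^0$ (the vanishing being an open condition in the flat family).

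The smoothability is handled verbatim as in the case $(3,8)$. Since $Y$ is a local complete intersection, $\mathcal{N}_Y$ is a rank $2$ bundle. The restriction $\mathcal{N}_Y\vert_E$ is obtained from $\mathcal{N}_E$ by one positive elementary transformation at $A_1$, so $h^1(E,\mathcal{N}_Y\vert_E)=0$ using $h^1(E,\mathcal{N}_E)=0$; while $\mathcal{N}_Y\vert_D$ is obtained from $\mathcal{N}_D=\mathcal{O}_D\oplus\mathcal{O}_D(1)$ by one negative elementary transformation, hence has no summand of degree $\le -2$ and satisfies $h^1(D,\mathcal{N}_Y\vert_D)=0$. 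Gluing these gives $h^1(Y,\mathcal{N}_Y)=0$, and \cite{hh}, Theorem 4.1 (applied to $Q$ in place of $\mathbb{P}^3$) yields nearby smooth elliptic curves $A$ with $h^1(\mathcal{N}_A)=0$.

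The crux is the vanishing $h^0(\mathcal{I}_Y(2))=0$. First I would record that $h^0(\mathcal{I}_{E,Q}(2))=2$: from Remark \ref{a1} we have $h^1(\mathcal{I}_{E,\mathbb{P}^4}(2))=0$, hence $h^1(\mathcal{I}_{E,Q}(2))=0$ (as $\mathcal{I}_{Q,\mathbb{P}^4}(2)=\mathcal{O}_{\mathbb{P}^4}$ has no higher cohomology), so the restriction sequence together with $h^0(\mathcal{O}_Q(2))=14=h^0(E,\mathcal{O}_E(2))+2$ gives the claim. The sequence
$$0\to \mathcal{I}_Y(2)\to \mathcal{I}_E(2)\to \mathcal{I}_{A_1,D}(2)\to 0,$$
whose quotient is $\mathcal{O}_D(2)(-A_1)\cong\mathcal{O}_{\mathbb{P}^1}(1)$ with a $2$-dimensional space of sections, shows that $h^0(\mathcal{I}_Y(2))=0$ is equivalent to injectivity of the restriction $H^0(\mathcal{I}_{E,Q}(2))\to H^0(\mathcal{I}_{A_1,D}(2))$ between two $2$-dimensional spaces; that is, to the statement that no quadric of $Q$ through $E$ contains $D$.

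The hardest point is therefore to secure this last property for a suitable $D$, and I would establish it by a dimension count. The lines of $Q$ meeting $E$ form a $2$-dimensional family: one parameter for the point of $E$, one for the conic of lines of $Q$ through it. On the other hand, the lines contained in some member of the pencil $\vert\mathcal{I}_{E,Q}(2)\vert\cong\mathbb{P}^1$ form a family of dimension at most $1$, since the general member is a smooth quartic Del Pezzo surface carrying only finitely many lines, and only finitely many members can contain a one-dimensional family of lines. Hence a general line $D$ meeting $E$ lies on no quadric through $E$, which forces $h^0(\mathcal{I}_Y(2))=0$ and completes the construction. I expect this dimension count, controlling the lines lying on members of the pencil of quadrics through $E$, to be the main obstacle; everything else parallels the $(3,8)$ argument.
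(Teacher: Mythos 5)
Your construction is a genuinely different route from the paper's, although both follow the same overall strategy (build a nodal degree $7$ curve of arithmetic genus $1$, smooth it via \cite{hh}, Theorem 4.1, and conclude by semicontinuity). The paper takes $Y=B\cup D_1\cup D_2$ with $B$ a quartic elliptic curve of type $(2,2)$ on a hyperplane section $Q_2$, $D_1$ a line and $D_2$ a conic, chosen so that $D_i\nsubseteq Q_2$ and no hyperplane section contains $D_1\cup D_2$. This choice makes the key vanishing trivial: any $\Delta\in\vert\mathcal{I}_Y(2)\vert$ contains $B$ and one more point of $Q_2$, hence contains $Q_2$, hence $\Delta=Q_2\cup Q'$ with $Q'$ a hyperplane section containing $D_1\cup D_2$, a contradiction. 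Your choice $Y=E\cup D$ (the sextic $E$ of Section 5 plus one line) recycles an already-built curve and is arithmetically cleaner ($h^0(\mathcal{I}_{E,Q}(2))=2$, the exact sequence, and the reduction to ``no member of the pencil $\vert\mathcal{I}_{E,Q}(2)\vert$ contains $D$'' are all correct), but it shifts the entire difficulty into a geometric statement about that pencil, which the paper's choice of $Y$ avoids altogether.

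That is exactly where your proof has a gap: you assert, without justification, that the general member of the pencil is a smooth quartic Del Pezzo surface. Bertini does not give this. Every member contains the base locus of the pencil, which is the degree $8$ curve $E\cup R_1\cup R_2$ (in the notation of Section 5: for any member $S\neq U$, the intersection $S\cap U$ is the unique quadric section of $U$ through $E$, namely $E\cup R_1\cup R_2$), and Bertini says nothing about smoothness along the base locus; proving it there requires a separate local analysis of the net of quadrics through $E$. Fortunately your dimension count does not need smoothness of the general member, only that it carries finitely many lines, and this you can get from data you already have: $U$ itself is a member of the pencil, and $U$ is a smooth quartic Del Pezzo surface, hence contains exactly $16$ lines. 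By upper semicontinuity of fiber dimension applied to the incidence variety $\{(\ell,t):\ell\subset S_t\}$ inside $F(Q)\times\mathbb{P}^1$ (where $F(Q)\cong\mathbb{P}^3$ is the family of lines of $Q$), only finitely many members can contain infinitely many lines; and even those members contribute at most $1$-dimensional families, since no member has a plane as a component ($Q$ contains no planes, and in fact every member is an irreducible quartic surface because $E$ is irreducible and non-degenerate). Hence the lines lying on members form a family of dimension at most $1$, while the lines meeting $E$ form an irreducible $2$-dimensional family, and your conclusion follows. With this replacement for the smoothness claim (and the routine checks that a general such $D$ is neither $R_1$ nor $R_2$, nor a secant or tangent line of $E$), your proof is complete; as written, it rests on an unproved and not Bertini-accessible assertion.
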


\begin{proof}
We start with a smooth hyperplane section $Q_2$ of $Q$ and a smooth elliptic curve $B \subset
Q_2$ of type $(2,2)$. Fix $2$ general points $P_1,P_2\in B$. The union $U(P_i)$
of all lines in $Q$ passing through $P_i$ is the quadric cone $T_{P_i}(Q)\cap Q$
of the tangent space $T_{P_i}(Q) \cong \mathbb {P}^3$ of $Q$ at $P_i$. Hence we may find a
line $D_1\subset Q$ such that $P_1\in D_1$ and a smooth conic
$D_2 \subset Q$ such that $D_2\cap Q_2$ is $P_2$ and a point not in $B$. For general $D_i$
we may also assume $D_1\cap D_2=\emptyset$ and $D_i\nsubseteq Q_2$.  For general $D_1, D_2$ we may also
assume that no hyperplane section of $Q$ contains $D_1\cup D_2$. Set $Y:= B\cup D_1\cup D_2$. Since $Q_2$ is a hyperplane section of $Q$, $D_i\nsubseteq Q_2$ and $B\subset Q_2$, then
$Q_2\cap D_i = \{P_i\}$ and $D_i$ is not the tangent line of $B$ at $P_i$. Hence $Y$ is a nodal connected curve of degree $7$ and with arithmetic genus $1$.

The reduced curve $Y$ is locally a complete intersection and hence
its normal sheaf $\Nn_{Y}$ is a rank 2 vector bundle of degree $\deg (TQ\vert_Y)
= 21$. Since $Y = B\cup (D_1\cup D_2)$ has nodes only at $P_1,P_2$, the vector bundles $\Nn_Y\vert_B$
are obtained from $\Nn_B$ making $2$ positive elementary transformations, each of them at one of the points $P_i$. Since $\Nn_B$
is a direct sum of a line bundle of degree $4$ and a line bundle of degree $8$, we have $h^1(B,\Nn_B)=0$.
Hence $h^1(B,\Nn_Y\vert_B)=0$. The normal bundle $\Nn_{D_1}$ is a direct sum
of a degree $0$ line bundle and a degree $1$ line bundle. The normal bundle $\Nn_{D_2}$ is a direct sum
of a degree $2$ line bundle and a degree $4$ line bundle. Hence for every vector bundle $\Ff$ on $D_i$ obtained from $\Nn_{D_1\cup D_2}$ making $2$ negative elementary transformations,
each of them at a different point $P_1,P_2$, has no factor of degree $\le -2$. Hence
$h^1(D_i,\Ff)=0$. Hence $h^1(D_1\cup D_2,\Gg)=0$, where $\Gg$ is the vector bundle
obtained from $\Nn_{D_1\cup D_2}$ making the $2$ negative elementary transformations
at $P_1$ and $P_2$ associated to the tangent lines of $B$ at these points.
Hence $h^1(Y,\Nn_Y)=0$ and $Y$ is smoothable inside $Q$ (use \cite{hh}, Theorem 4.1, for $Q$ instead of the smooth 3-fold $\mathbb {P}^3$). By semicontinuity to prove Lemma \ref{b1} it
is sufficient to prove $h^0(\mathcal {I}_Y(2))=0$. Assume $h^0(\mathcal {I}_Y(2))>0$ and
take $\Delta \in \vert \mathcal {I}_Y(2)\vert$. Since $Q_2\cap Y$ contains $B$ and a point of $D_2\cap (Q_2\setminus B)$, we have $Q_2\subset \Delta$, i.e. $\Delta = Q_2\cup Q'$
for some hyperplane section $Q'$ of $Q$. Since neither $D_1$ nor $D_2$ is contained
in $Q_2$, we get $D_1\cup D_2\subset Q'$, contradicting our choice of $D_1\cup D_2$.
\end{proof}

\begin{lemma}\label{b2}
There is a smooth elliptic curve $C\subset Q$ such that $\deg (C) = 9$, $h^0(\mathcal {I}_C(3))=3$
and $h^1(\mathcal {I}_C(3))=0$.
\end{lemma}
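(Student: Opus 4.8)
The plan is to mimic the construction used for the case $c_2=8$ (Proposition \ref{a00}), now starting from the degree $7$ curve produced in Lemma \ref{b1}. Take the smooth elliptic curve $A\subset Q$ with $\deg(A)=7$ and $h^0(\mathcal{I}_A(2))=0$ given by Lemma \ref{b1}; since $h^0(\mathcal{I}_A(2))=0$ forces $h^0(\mathcal{I}_A(1))=0$, the curve $A$ is non-degenerate, and from $\chi(\mathcal{I}_A(2))=14-14=0$ together with $h^0(\mathcal{I}_A(2))=0$ and $h^{\ge 2}(\mathcal{I}_A(2))=0$ we also get $h^1(\mathcal{I}_A(2))=0$. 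Choose two general points $A_1,A_2\in A$ and, for each $i$, a line $D_i\subset Q$ through $A_i$ lying in the quadric cone $(T_{A_i}Q)\cap Q$, chosen so that $D_i$ meets $A$ quasi-transversally and only at $A_i$, that $D_1\cap D_2=\emptyset$, and that no hyperplane section of $Q$ contains $D_1\cup D_2$. Then $Y:=A\cup D_1\cup D_2$ is a nodal connected curve of degree $9$ with $p_a(Y)=1$ whose only singularities are the nodes $A_1,A_2$. I add two lines (rather than one conic) precisely so that $\langle D_1,D_2\rangle$ is a hyperplane, which is what the Castelnuovo argument below needs.

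First I would show that $Y$ is smoothable inside $Q$, exactly as in Lemma \ref{b1}. The reduced curve $Y$ is locally a complete intersection, so $\Nn_Y$ is a rank $2$ vector bundle of degree $\deg(TQ|_Y)=3\deg(Y)=27$. Restricting to components, $\Nn_Y|_A$ is obtained from $\Nn_A$ by two positive elementary transformations at $A_1,A_2$; since $A$ was itself obtained as a general smoothing we may assume $h^1(A,\Nn_A)=0$, hence $h^1(A,\Nn_Y|_A)=0$. Each $\Nn_{D_i}=\mathcal{O}_{D_i}\oplus\mathcal{O}_{D_i}(1)$, so after one negative elementary transformation at $A_i$ no summand has degree $\le -2$ and $h^1(D_i,\cdot)=0$; gluing gives $h^1(Y,\Nn_Y)=0$. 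By \cite{hh}, Theorem 4.1 applied to $Q$ instead of $\PP^3$, the curve $Y$ is smoothable and the nearby smooth curves $C$ are elliptic of degree $9$ with $h^1(\Nn_C)=0$. By semicontinuity it therefore suffices to prove $h^1(\mathcal{I}_Y(3))=0$, since then the general smoothing $C$ also satisfies $h^1(\mathcal{I}_C(3))=0$; and because $\chi(\mathcal{I}_C(3))=30-27=3$ with $h^{\ge 2}(\mathcal{I}_C(3))=0$, this forces $h^0(\mathcal{I}_C(3))=3$, which is the assertion of the lemma.

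The heart of the argument, and the step I expect to be the main obstacle, is the vanishing $h^1(\mathcal{I}_Y(3))=0$, for which I would follow Lemma \ref{a4}. Let $M=\langle D_1,D_2\rangle\cong\PP^3$ be the hyperplane spanned by the two disjoint lines and set $Q'=Q\cap M$; for general $D_1,D_2$ the surface $Q'$ is a smooth quadric and $M$ is tangent to $A$ at neither $A_1$ nor $A_2$. The Castelnuovo exact sequence
\begin{equation*}
0\to \mathcal{I}_A(2)\to \mathcal{I}_Y(3)\to \mathcal{I}_{(D_1\cup D_2)\cup (A\cap Q'),Q'}(3)\to 0,
\end{equation*}
together with $h^1(\mathcal{I}_A(2))=0$, reduces the claim to $h^1(Q',\mathcal{I}_{(D_1\cup D_2)\cup (A\cap Q'),Q'}(3))=0$. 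Since $M$ is not tangent to $A$ at the nodes, $A\cap Q'=\{A_1,A_2\}\sqcup Z$ with $Z$ of degree $5$ and $Z\cap(D_1\cup D_2)=\emptyset$; as $A_i\in D_i$ and $\mathcal{I}_{D_1\cup D_2,Q'}(3)\cong\mathcal{O}_{Q'}(1,3)$, the claim becomes $h^1(Q',\mathcal{I}_{Z,Q'}(1,3))=0$.

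Finally I would analyze the degree $5$ scheme $Z$ on the smooth quadric $Q'\cong\PP^1\times\PP^1$. Since $\chi(\mathcal{I}_{Z,Q'}(1,3))=8-5=3$, the vanishing $h^1=0$ is equivalent to $Z$ imposing independent conditions on $|\mathcal{O}_{Q'}(1,3)|$, and a residuation along the two rulings shows that this can fail only if some line on which $\mathcal{O}_{Q'}(1,3)$ has degree $1$ meets $Z$ in a subscheme of length $\ge 3$ (a trisecant line of $A$), or if all five points of $Z$ lie on a single line on which $\mathcal{O}_{Q'}(1,3)$ has degree $3$ (a $5$-secant line of $A$). Both are ruled out for general $A$: it is enough that the degree $7$ elliptic curve $A$ has neither a two-dimensional family of trisecant lines nor a $5$-secant line, which we may assume because $A$ is a general smoothing and we are free to move $A_1,A_2$ and the lines $D_1,D_2$. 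This yields $h^1(Q',\mathcal{I}_{Z,Q'}(1,3))=0$, hence $h^1(\mathcal{I}_Y(3))=0$, completing the plan. The delicate point throughout is this secant analysis on $Q'$: passing from the degree $4$ residual scheme of the $c_2=8$ case to one of degree $5$ introduces the extra $5$-secant configuration, so the care lies in checking that a general $A$ simultaneously avoids all the bad secant loci.
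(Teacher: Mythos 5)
Your proposal reproduces the paper's own skeleton almost exactly: the same nodal curve $Y=A\cup D_1\cup D_2$ built from the degree-$7$ curve of Lemma \ref{b1}, the same smoothing argument via \cite{hh}, and the same reduction to the vanishing $h^1(Q',\mathcal{I}_{Z,Q'}(1,3))=0$ for the degree-$5$ residual scheme $Z$ on the smooth quadric $Q'=Q\cap\langle D_1,D_2\rangle$. (Your route to that reduction, via the Castelnuovo sequence with kernel $\mathcal{I}_A(2)$ and your computation $h^1(\mathcal{I}_A(2))=0$ from $\chi(\mathcal{I}_A(2))=0$, is the mechanism of Lemma \ref{a4} and is fine; the paper's version of Lemma \ref{b2} instead adds three general points $O_1,O_2,O_3\in Q'$ and counts sections, but the two reductions land at the same statement. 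One small slip: you impose that ``no hyperplane section of $Q$ contains $D_1\cup D_2$,'' which is impossible for two disjoint lines and contradicts your very next sentence, where $\langle D_1,D_2\rangle$ is a hyperplane; that condition belongs to Lemma \ref{b1}, where $D_2$ was a conic. It is unused, so harmless.)

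The genuine divergence, and the genuine gap, is in how the key vanishing is proved. Your dichotomy on $Q'$ is correct for the (reduced, for general choices) scheme $Z$: $h^1(\mathcal{I}_{Z,Q'}(1,3))>0$ if and only if some line on which $\mathcal{O}_{Q'}(1,3)$ has degree $1$ contains a length-$3$ subscheme of $Z$, or all of $Z$ lies on a single line of the other ruling. But you then dispose of both bad configurations by asserting that $A$ ``has neither a two-dimensional family of trisecant lines nor a $5$-secant line, which we may assume because $A$ is a general smoothing.'' This is exactly the crux, and nothing in your argument supports it: $A$ is known only as an abstract smoothing of the reducible curve of Lemma \ref{b1}, and generality in its Hilbert-scheme component does not by itself bound its multisecant lines lying on $Q$; moreover, even granting a $1$-dimensional trisecant family, one still has to check that a general pair $(D_1,D_2)$ — which ranges only over a $2$-dimensional family, since each $D_i$ moves in the pencil of lines of $Q$ through $A_i$ — avoids all the bad incidences, and you do not perform that count. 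The paper evidently saw this difficulty and took a different, more robust endgame: it specializes $A$ to $A'\cup L_1\cup L_2\cup L_3$ (an elliptic quartic of type $(2,2)$ plus three lines), arranges $\sharp(Z'\cap D)=4$ for a smooth $(1,1)$-curve $D$, and gets $h^1(Q',\mathcal{I}_{Z'}(1,3))=0$ from the residual sequence $0\to\mathcal{I}_{Z'\setminus Z'\cap D,Q'}(0,2)\to\mathcal{I}_{Z',Q'}(1,3)\to\mathcal{I}_{D\cap Z',D}(1,3)\to 0$, concluding by semicontinuity. Your route can be repaired — for instance, a $2$-dimensional family of bisecant lines of $A$ contained in $Q$ would sweep out either all of $Q$ (forcing the secant variety of $A$ to be the quadric, impossible by degree) or a surface carrying a $2$-dimensional family of lines, i.e.\ a plane, which the smooth quadric threefold does not contain; hence trisecants and $5$-secants of $A$ on $Q$ form families of dimension at most $1$, after which a dimension count over the pairs $(D_1,D_2)$ closes the argument — but as written, the avoidance claim is asserted rather than proved, precisely at the step where the paper does its real work.
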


\begin{proof}
Let $C\subset Q$ be any smooth elliptic curve of degree $9$. Since $h^0(\mathcal {O}_C(3))
= 27 = h^0(\mathcal {O}_{Q}(3))-3$, we have $h^1(\mathcal {I}_C(3)) = h^0(\mathcal {I}_C(3))-3$.
Hence it is sufficient to prove the existence of a smooth elliptic curve $C$
with $ \deg (C) = 9$ and $h^0(\mathcal {I}_C(3))=3$. Let $A\subset Q$ be a smooth elliptic curve such that $\deg (A) =7$ and $h^0(\mathcal {I}_A(2))=0$. Fix two general $P_1, P_2\in A$
(Lemma \ref{b1}). The union $U(P_i)$
of all lines in $Q$ passing through $P_i$ is the quadric cone $T_{P_i}(Q)\cap Q$
of the tangent space $T_{P_i}(Q)$ of $Q$ at $P_i$. Hence we may find lines $D_i\subset Q$, $i=1,2$, such that $P_i\in D_i$, $D_i$ is not the tangent line to $A$ at $P_i$, $D_1\cap D_2=\emptyset$ and $D_i\cap A = \{P_i\}$. Hence $Y:= A\cup D_1\cup D_2$ is a connected nodal curve
with degree $9$ and arithmetic genus $1$. As in the proof of Lemma \ref{b1} we see that $Y$ is smoothable inside $Q$. Hence by semicontinuity to prove Lemma \ref{b2} it is sufficient
to prove $h^0(\mathcal {I}_Y(3))=3$. Let $H\subset \mathbb {P}^4$ be the hyperplane spanned
by $D_1\cup D_2$. Set $Q':= Q\cap H$. Since $Q'$ contains the disjoint lines $D_1, D_2$, $Q'$
is a smooth quadric surface. Call $(1,0)$ the ruling of $Q'$ containing $D_1$ and $D_2$.
Fix general $O_1,O_2,O_3\in Q'$. Since $h^0(\mathcal {I}_Y(3)) \ge 3$ by the Riemann-Roch theorem,
to prove $h^0(\mathcal {I}_Y(3))=3$ it is sufficient to prove $h^0(\mathcal {I}_{Y\cup \{O_1,O_2,O_3\}}(3)) =0$. Assume $h^0(\mathcal {I}_{Y\cup \{O_1,O_2,O_3\}}(3)) >0$
and take $\Delta \in \vert \mathcal {I}_{Y\cup \{O_1,O_2,O_3\}}(3)\vert$. For general $D_1$ and $D_2$ we may also assume that $H$ is not tangent to $A$ neither at $P_1$ nor at $P_2$.
Hence the scheme $A\cap Q'$ is the disjoint union of $P_1$, $P_2$, and a degree $5$ scheme, $Z$, such
that $Z\cap (D_1\cup D_2)=\emptyset$. First assume
$Q'\subset \Delta$. Hence $\Delta = Q'\cup T$ with $T$ a quadric hypersurface of $Q$.
Since $Q'\cap A$ is a finite set, we get $A\subset T$. Hence $h^0(\mathcal {I}_A(2)) >0$, a contradiction.
Hence $\Delta' := \Delta \cap Q'$ is a divisor of type $(3,3)$ of $Q'$ containing $D_1\cup D_2$.
Set $J:= \Delta ' -D_1-D_2$. $J$ is a divisor of type $(1,3)$ on $Q'$ containing $Z$, $O_1$, $O_2$ and $O_3$. Since the points $O_i$ are general in $Q'$, to get a contradiction (and hence to prove
the lemma) it is sufficient to prove $h^0(Q',\mathcal {I}_Z(1,3)) =3$, i.e. $h^1(\mathcal {I}_Z(1,3))=0$. Fix a smooth $D\in \vert \mathcal {O}_{Q'}(1,1)\vert$ and take a general hyperplane section
$T$ of $Q$ with $T\cap T' = D$. $T$ is a smooth quadric surface. We may specialize $A$ to a curve $Y':= A'\cup L_1\cup L_2\cup L_3$ with $A'$ a smooth curve of type $(2,2)$, each $L_i$ a line
intersecting transversally $A'$ and $P_i\in L_i$, $i=1,2$. For general $Y'$ we have
$Y\cap Q' = Z'\cup \{P_1,P_2\}$ with $\sharp (Z'\cap D) =4$. By semicontinuity it is sufficient to prove
$h^1(Q',\mathcal {I}_{Z'}(1,3)) =0$. Since $\sharp (Z'\setminus Z'\cap D)=1$, we
have $h^1(Q',\mathcal {I}_{Z'\setminus Z'\cap D;Q'}(0,2))=0$. $D$ is a smooth rational curve and $\deg (\mathcal {O}_D(1,3))=4$. Hence $h^1(D,\mathcal {I}_{D\cap Z',S}(1,3))=0$. From the exact sequence on $Q'$: $$0\to \mathcal {I}_{Z'\setminus Z'\cap D,Q'}(0,2) \to \mathcal {I}_{Z',Q'}(1,3) \to \mathcal {I}_{D\cap Z',D}(1,3)\to 0$$
we get $h^1(Q',\mathcal {I}_{Z'}(1,3)) =0$, concluding the proof.
\end{proof}

Let us start with the smooth elliptic curve $C$ given by Lemma \ref{b2} ; hence $h^1(\mathcal {I}_C(3))=0$
and $h^0(\mathcal {I}_C(3))=3$. Since $h^0(\Ii _C(3)) \ge 2$, there are at least two degree $3$ hypersurfaces $M_1, M_2$ containing
$C$. Since $h^0(\Ii _C(2))=0$, the scheme-theoretic intersection $X:= M_1\cap M_2$ has
dimension $1$, i.e. it is a complete intersection.
 By the Liaison induced by $X$, we have
 \begin{equation}\label{eqa2}
0\to \Ii_X(3) \to \Ii_C(3) \to \omega _Y \to 0.
\end{equation}
We need to prove that $\omega _Y$ is spanned. Since $X$ is a complete intersection of $M_1$
and $M_2$, we have
$h^0(\Ii _X(3))=2$ and $h^1(\Ii _X(3))=0$. Since $h^0(\Ii _C(3)) =3$, the sequence (\ref{eqa2}) gives $h^0(\omega _Y)=1$. We have $h^2(\Ii _C(3)) = h^1(\Oo _C(3))=0$ and $h^2(\Ii _X(3))
= h^1(\Oo_X(3)) =1$ since $\omega _X \cong \Oo _X(3)$ by the adjunction formula.
Since $h^1(\Ii _C(3)) =0$ we get $h^1(\omega _Y)=1$.
Hence $h^0(\mathcal {O}_Y)=1$ by the duality of locally Cohen-Macaulay projective
schemes. Since $p_a(Y)=1$ to get that
$\omega _Y$ is trivial and hence that $\omega _Y$ is spanned, it is sufficient to prove that (at least for
certain $C$) it is spanned outside finitely many points.
Since $\omega _Y$ is a quotient of $\mathcal {I}_C(3)$, it is spanned at all
points at which $\mathcal {I}_C(3)$ is spanned. Hence it is sufficient to find $C$ with the
additional condition that $\mathcal {I}_C(3)$ is spanned outside finitely many points. This is the Lemma
\ref{b3} below.

\begin{lemma}\label{b3}
There is a smooth elliptic curve $C\subset Q$ such that $\deg (C) = 9$, $h^0(\mathcal {I}_C(3))=3$, $h^1(\mathcal {I}_C(3))=0$ and such that $\mathcal {I}_C(3)$ is spanned outside
finitely many points.
\end{lemma}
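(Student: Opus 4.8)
Lemma \ref{b3} asks us to produce, from the smooth degree‑9 elliptic curve of Lemma \ref{b2}, a curve whose ideal sheaf twisted by 3 is spanned away from finitely many points. Let me look at what I need and how the machinery of this paper lets me get it.

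Let me understand the setup. I have $C \subset Q$ smooth elliptic of degree 9 with $h^0(\mathcal{I}_C(3)) = 3$ and $h^1(\mathcal{I}_C(3)) = 0$, and I want $\mathcal{I}_C(3)$ spanned outside a finite set. The previous lemma constructed $C$ by smoothing the nodal curve $Y = A \cup D_1 \cup D_2$, where $A$ is a degree‑7 elliptic curve and $D_1, D_2$ are lines. The text immediately before the lemma has already reduced the entire $c_2=9$ existence problem to exactly this spanning statement (via showing $\omega_Y$ is trivial, hence spanned, once $\mathcal{I}_C(3)$ is spanned off a finite set).

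\begin{proof}
The plan is to deduce the statement for the smooth curve $C$ of Lemma \ref{b2} from the corresponding statement for the nodal model $Y = A\cup D_1\cup D_2$ used in its proof, and to bound the whole base scheme of the net $\vert\mathcal{I}_Y(3)\vert$; write $\mathcal{B}$ for this base scheme. Since ``$\mathcal{I}_{(\cdot)}(3)$ is spanned outside finitely many points'' means exactly ``$\mathcal{B}$ has no one-dimensional component other than the curve itself'', and since $h^0(\mathcal{I}_{C_t}(3))=3$ is constant in the smoothing family (as in Lemma \ref{b2}), the locus of parameters $t$ for which the relative base scheme has a fibre of dimension $\ge 1$ off $C_t$ is closed; it therefore suffices to check that the special member $Y$ avoids it, and the property then passes to the general deformation $C$.

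The first input is that $\mathcal{I}_A(3)$ is globally generated. Lemma \ref{b1} gives $h^0(\mathcal{I}_A(2))=0$; since $\deg\mathcal{O}_A(2)=14=h^0(\mathcal{O}_Q(2))$, the structure sequence twisted by $2$ forces $h^1(\mathcal{I}_A(2))=0$, while $h^2(\mathcal{I}_A(1))=h^1(\mathcal{O}_A(1))=0$ and $h^3(\mathcal{I}_A)=0$ because $A$ is a curve. Thus $\mathcal{I}_A$ is $3$-regular on $Q$ and the Castelnuovo--Mumford criterion yields that $\mathcal{I}_A(3)$ is spanned. This already handles the part of the base locus along $A$, and it pins down $h^0(\mathcal{I}_Y(3))=3$: using $h^0(\mathcal{O}_Q(3))=30$ and $\deg\mathcal{O}_A(3)=21$ one gets $h^0(\mathcal{I}_A(3))=9$, and the two disjoint lines $D_1,D_2$ (each meeting $A$ at one point $P_i$) impose $6$ further independent conditions.

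Next I localise any extra base locus by liaison. Choosing $M_1,M_2\in\vert\mathcal{I}_Y(3)\vert$, the complete intersection $X=M_1\cap M_2$ is a curve of degree $18$ with $\omega_X\cong\mathcal{O}_X(3)$ and $h^0(\mathcal{I}_X(3))=2$, and $X=Y\cup W$ with $\deg W=9$, $p_a(W)=1$, exactly the residual entering \eqref{eqa2}. For a third general member $M_3$ we have $\mathcal{B}\subseteq X\cap M_3=Y\cup(W\cap M_3)$, so $\mathcal{I}_Y(3)$ is spanned off a finite set as soon as $M_3$ contains no component of $W$. The clean mechanism is this: if $W$ were contained in every member of the net, then every member would contain $X$, forcing $h^0(\mathcal{I}_Y(3))=h^0(\mathcal{I}_X(3))=2$, against $h^0(\mathcal{I}_Y(3))=3$; hence when $W$ is integral a general $M_3$ meets it in finitely many points and $\mathcal{B}=Y$ off a finite set, as required.

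The main obstacle is the reducible case, namely excluding a proper fixed component $\Gamma\subsetneq W$ of the net. The plan is to handle it by restriction to the smooth quadric surface $Q'=Q\cap H$, $H=\langle D_1,D_2\rangle$, as in Lemmas \ref{a3} and \ref{a4}: no member contains $Q'$ (no quadric contains $A$, since $h^0(\mathcal{I}_A(2))=0$), so restriction identifies the net with a subsystem of $\vert\mathcal{I}_{D_1\cup D_2\cup Z,\,Q'}(3,3)\vert\cong\vert\mathcal{I}_{Z,Q'}(1,3)\vert$, where $Z=(A\cap Q')\setminus\{P_1,P_2\}$ has degree $5$ and is disjoint from $D_1\cup D_2$. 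Arguing as in Lemma \ref{a4}, $\vert\mathcal{I}_{Z,Q'}(1,3)\vert$ has $Z$ as its scheme-theoretic base locus once no length-$3$ subscheme of $Z$ lies on a ruling line of type $(1,0)$, which is arranged by taking $A$ without a two-dimensional family of trisecant lines and $H$ general; hence $\mathcal{B}\cap Q'=Y\cap Q'$, so any $\Gamma$ meets $Q'$ only along $Y$. Combining this base-point-freeness on $Q'$ with the no-trisecant genericity of $A$ is meant to preclude $W$ from carrying a line or conic as a fixed component, which is exactly the point I expect to be delicate. Granting it, $\mathcal{B}$ has no one-dimensional part beyond $Y$; by the semicontinuity of the first paragraph the same holds for the smooth curve $C$, and $\mathcal{I}_C(3)$ is spanned outside finitely many points.
\end{proof}
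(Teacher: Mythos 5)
Your reduction by semicontinuity, the Castelnuovo--Mumford argument that $\mathcal{I}_A(3)$ is spanned, and the liaison treatment of the case where the residual curve $W$ is integral are all correct; in particular the count $h^0(\mathcal{I}_Y(3))=3>2=h^0(\mathcal{I}_X(3))$ does show that $W$ itself cannot be a fixed part of the net. But the lemma is not proved, because the case you explicitly ``grant'' --- a proper fixed component $\Gamma\subsetneq W$ --- is precisely where all the difficulty lies, and your sketch for it does not work. The step that fails is the appeal to Lemma \ref{a4}: that lemma is an $h^1$-vanishing statement (the points impose independent conditions on $\mathcal{O}_{Q'}(1,3)$ under the no-trisecant hypothesis), not a base-point-freeness statement, and the two are genuinely different here. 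Concretely, if two of the five points of $Z$ were to lie on a line $L$ of the ruling opposite to $D_1,D_2$, then, since $\mathcal{O}_{Q'}(1,3)$ has degree $1$ on $L$, every member of $\vert\mathcal{I}_{Z,Q'}(1,3)\vert$ would contain $L$, i.e.\ the restricted net would have a one-dimensional fixed part off $Y$; nothing in your hypotheses (trisecancy of $A$, or ``$H$ general'' --- which in your setup is not free, since $H=\langle D_1,D_2\rangle$ is dictated by the lines) excludes this.

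What the paper does, and what is missing from your argument, is to rebuild $Y$ rather than reuse the one from Lemma \ref{b2}: it chooses a \emph{general} hyperplane $H$ first, invokes the Uniform Position Principle and the full monodromy of the generic hyperplane section of $B$ (\cite{acgh}) to guarantee that the seven points of $B\cap H$ are in uniform position and that no two of them lie on a line of $H$, and only then takes $D_1,D_2$ to be ruling lines of $Q'=Q\cap H$ through two of these points, with $S$ the remaining five. Uniform position of $S$ is what makes the general member $T$ of the restricted net $\vert\mathcal{I}_{S,Q'}(1,3)\vert$ irreducible, so that two general members meet in a zero-dimensional scheme of degree $6$ containing the degree-$5$ set $S$; this forces the base scheme of $\vert\mathcal{I}_Y(3)\vert$ on $Q'$ to be exactly $Y\cap Q'$, and since every one-dimensional component of the base locus must meet the hyperplane section $Q'$, the extra base locus is finite. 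None of these inputs (generality of $H$, no two points collinear, irreducibility of the general restricted member) is available in your construction, so the ``delicate point'' you set aside is not a technical remainder: it is the content of the lemma, and your proposed mechanism for it is insufficient as stated.
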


\begin{proof}
By semicontinuity it is sufficient to find $Y = B\cup D_1\cup D_2$ as in the proof of Lemma \ref{b2} with the additional property that the base locus of $\mathcal {I}_Y(3)$ is finite.
Fix $B$ satisfying the thesis of Lemma \ref{b1}. Let $H\subset \mathbb {P}^4$ be a general hyperplane.
By the Uniform Position Principle (\cite{acgh},  pp. 112--113) the scheme $B\cap H$ is formed by $7$
points in uniform position and we call $A_1, A_2$ two of them. Moreover, the
monodromy of the generic hyperplane section is the full transite group (\cite{acgh}, p. 112).
Hence for general $H$ we may assume that no two of the points of $A\cap H$ are contained
in a line in $H$. Set $Q':= Q\cap H$. For
general $H$ the scheme $Q'$ is a smooth quadric surface. Fix one of the system of lines
of $Q'$, say $(1,0)$, and call $D_i$ the line of type $(1,0)$ of $Q'$ containing $A_i$. Set $S:= B\cap Q'\setminus \{A_1,A_2\}$. Notice that $\sharp (S)=5$ and $S\cap D_i=\emptyset$.
Set $Y:= B\cup D_1\cup D_2$. The proof of Lemma \ref{b2} gives $h^1(\mathcal {I}_Y(3))=0$
and $h^0(\mathcal {I}_Y(3))=3$. Let $\mathcal {B}$ denote the base locus
of $\mathcal {I}_Y(3)$. It is sufficient to prove that $\mathcal {B}\cap Q'=\emptyset$. Since
$h^i(\mathcal {I}_B(2))=0$, $i=0,1$, we saw in the proof of Lemma \ref{b2}
that the restriction of $\vert \mathcal {I}_Y(3)\vert$ to $Q'$ is given by all forms
$D_1\cup D_2\cup T$ with $T\in \vert \mathcal {I}_S(1,3)\vert$
and $h^0(Q',\mathcal {I}_S(1,3))=3$. Since $S$ is in uniform position, $T$ is a general element
of $\vert \mathcal {I}_S(1,3)\vert$. Fix two general $T_1, T_2\in \vert \mathcal {I}_S(1,3)\vert$. Since
$T_1$ is irreducible, the scheme $T_1\cap T_2$ is zero-dimensional. We have $\deg (T_1\cap T_2) = 3+3 =6$. Since $S\subset T_1\cap T_2$ and $h^0(Q',\mathcal {I}_S(1,3))=3$, we get
that $\mathcal {I}_S(1,3)$ is a spanned sheaf of $Q'$. Since $S\cap (D_1\cup D_2)=\emptyset$,
we also get that the scheme $Y\cap Q'$ is the intersection with $Q'$ of all elements
of $\vert \mathcal {I}_Y(3)\vert$. Hence $\mathcal {B}\cap Q'=\emptyset$. Hence $\mathcal {B}$
is supported by finitely many points.
\end{proof}

\begin{remark}
Similarly as in the case $c_1=8$, for a vector bundle $\Ee \in \mathfrak{M} (3, 9)$ that fits into the sequence:
$$0\rightarrow \Oo_Q(1) \rightarrow \Ee \rightarrow \Ii_Z(2)\rightarrow 0,$$
where $Z$ is the disjoint union of 5 lines, we can easily compute that $h^2(\mathcal{E}nd(\Ee))=0$. It implies that $\Ee$ is a smooth point of $\mathfrak{M}(3,9)$ and the dimension of $\mathfrak{M}(3,9)$ is 24 since such bundles form a 19-dimensional subvariety of $\mathfrak{M}(3,9)$ and $\chi(\mathcal{E}nd(\Ee))=-23$.
\end{remark}

As an automatic consequence from the classification, we observe that if $\Ee$ is a rank two globally generated vector bundle on $Q$ with $c_1=3$, then we have $c_1(\Ee(-2))=-1$ and $H^1(\Ee(-3))=0$. Thus we have the following:

\begin{corollary}
Every rank two globally generated vector bundle on $Q$ with $c_1=3$, is an odd instanton (see \cite{faenzi}) up to twist.
\end{corollary}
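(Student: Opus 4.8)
The plan is to realize the twist $\Ff := \Ee(-2)$ as a bundle meeting the two defining conditions of an odd instanton recorded in the Remark recalling \cite{faenzi}, namely that $\Ff$ has rank $2$ with $c_1(\Ff)=-1$ and $H^1(\Ff(-1))=0$; once both hold, \cite{faenzi} exhibits $\Ff$ as the cohomology of a monad of the form $0\to \Oo_Q^{\oplus k-1}\to \Sigma^{\oplus k}\to \Oo_Q(1)^{\oplus k-1}\to 0$, so that $\Ee=\Ff(2)$ is an odd instanton up to twist. The first condition is immediate: by the Chern class formula $c_1(\Ee(k))=c_1+2k$ for rank $2$ bundles, one gets $c_1(\Ff)=c_1(\Ee(-2))=3-4=-1$, with no further computation.

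The substance is the vanishing $H^1(\Ff(-1))=H^1(\Ee(-3))=0$, which I would extract from the classification. Twisting the defining sequence (\ref{eqa1}) by $\Oo_Q(-3)$ gives $0\to \Oo_Q(-3)\to \Ee(-3)\to \Ii_C\to 0$, and since a smooth quadric threefold carries no intermediate cohomology for line bundles, $H^1(\Oo_Q(-3))=H^2(\Oo_Q(-3))=0$, whence $H^1(\Ee(-3))\cong H^1(\Ii_C)$. Inserting $0\to \Ii_C\to \Oo_Q\to \Oo_C\to 0$ into cohomology and using $H^1(\Oo_Q)=0$ identifies $H^1(\Ii_C)$ with the cokernel of the restriction $H^0(\Oo_Q)\to H^0(\Oo_C)$, which vanishes precisely when $h^0(\Oo_C)=1$, i.e. when $C$ is connected. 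By Theorem \ref{mt} every indecomposable globally generated $\Ee$ with $c_1=3$ has connected associated curve: for $c_2=5$ the bundle is $\Sigma(1)$, whose curve is a quintic elliptic curve, and for $6\le c_2\le 9$ the curve $C$ is a smooth irreducible elliptic curve of degree $c_2$. In each such case $h^0(\Oo_C)=1$, so $H^1(\Ee(-3))=0$. The decomposable bundles $\Ee\cong \Oo_Q(a)\oplus \Oo_Q(3-a)$ with $a\in\{0,1\}$ are handled directly: here $\Ee(-3)$ is a direct sum of line bundles $\Oo_Q(j)$ with $j\le 0$, each having vanishing $H^1$, so again $H^1(\Ee(-3))=0$.

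With both conditions verified, $\Ff=\Ee(-2)$ is an odd instanton by \cite{faenzi}, and therefore $\Ee$ is an odd instanton up to twist, which is the assertion. The only delicate point is the connectedness input underlying $H^1(\Ii_C)=0$; this is not a separate computation but a direct reading of Theorem \ref{mt}, in which the curve $C$ is explicitly identified as connected in every indecomposable case, the decomposable cases being dispatched by the elementary line-bundle computation above.
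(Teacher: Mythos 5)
Your proof is correct and takes essentially the same route as the paper: the paper's entire argument is the observation, drawn from the classification, that $c_1(\Ee(-2))=-1$ and $H^1(\Ee(-3))=0$, followed by the Remark recalling \cite{faenzi} which turns these two conditions into the monad description. Your only addition is to make the vanishing $H^1(\Ee(-3))=0$ explicit by reducing it, via the standard exact sequences, to connectedness of the associated curve $C$ (plus the direct line-bundle computation for the split cases); the paper establishes the same vanishing case by case in Sections 4--6 (ACM-ness for $c_2=5$, monads for $c_2=6$, the cohomology table for $c_2=8$, the irreducibility lemma for $c_2=9$), so this is a legitimate unwinding of the same argument rather than a different one.
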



\section{On higher dimensional quadrics}

We denote by $Q_n$ the smooth quadric hypersurface of dimension $n>3$. 

\begin{theorem}\label{aBonQn}
The only indecomposable and globally generated vector bundles $\Ff$ of rank $2$ on $Q_n$, $n \ge 4$, with $c_1(\Ff)\le3$  are the followings:
\begin{enumerate}
\item for $n\ge6$, no such bundle exists,
\item for $n=5$, $\Ff(-2)$ is a Cayley bundle, i.e.\ $\Ff(-2)$ is a bundle with $c_1=-1$, $c_2=2$,
\item for $n=4$,  either $\Ff(-2)$ is a spinor bundle 
or it has $c_1=-1$, $c_2=(1,1)$, i.e.\ $\Ff(-2)$ is the restriction of a Cayley bundle to $Q_4$.
\end{enumerate}
\end{theorem}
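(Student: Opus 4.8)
The strategy is to reduce each case on $Q_n$ ($n\ge4$) to the three-dimensional situation already classified, by restricting $\Ff$ to a smooth hyperplane section (which is a smooth quadric of dimension $n-1$) and then descending to a codimension-$(n-3)$ linear section $Q_3\subset Q_n$. The key general input is that a globally generated vector bundle stays globally generated under restriction, and that on a quadric the restriction map on Picard and Chern data is well understood via the Lefschetz-type hyperplane theorem: for $n\ge 4$ one has $\Pic(Q_n)=\ZZ h$ with $h$ the hyperplane class, so the twist $\Ff(-2)$ is unambiguous and its Chern classes restrict predictably. The plan is first to handle the twist: since $c_1(\Ff)\le 3$ and $\Ff$ is globally generated, the analysis mirrors Propositions \ref{u1}, \ref{u2} and the $c_1=3$ results, so after twisting by $-2$ we expect to land on bundles with $c_1=-1$, which is where spinor and Cayley bundles live.

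First I would dispose of $n\ge 6$. The point is that on $Q_n$ with $n$ large there are no indecomposable rank-$2$ bundles of such low $c_1$: by a theorem on the splitting of low-rank bundles on high-dimensional quadrics (the analogue of the Horrocks / Ottaviani splitting criteria, cf.\ \cite{O}, \cite{O1}), any rank-$2$ bundle with sufficiently small first Chern class and $n\ge 6$ must split, because the intermediate cohomology forced by indecomposability cannot be sustained. Concretely, restricting to a general $Q_3$ and invoking Theorem \ref{mt} shows the only indecomposable globally generated candidates with $c_1\le 3$ are spinor-type; but the spinor bundle on $Q_n$ for $n\ge 6$ has rank $>2$, so no rank-$2$ indecomposable bundle survives. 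This yields case (1).

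For $n=5$ and $n=4$ I would argue that $\Ff(-2)$ is stable (or at least not split) with $c_1=-1$, and then identify it. Restricting $\Ff(-2)$ to a general $Q_3$, Proposition \ref{u1} together with the $c_1=3$ classification pins down the restriction as a spinor twist; the Cayley bundle on $Q_5$ is precisely the rank-$2$ indecomposable bundle with $c_1=-1$, $c_2=2$ whose restriction to $Q_4$ is the restriction of a spinor-type object, so matching Chern classes $c_2=2$ on $Q_5$ and $c_2=(1,1)$ on $Q_4$ (using the cohomology-ring relations $h^2=2l$ recorded in Section 2, adapted to the higher-dimensional intersection ring) gives cases (2) and (3). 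The essential step is to check that these Chern data are forced and that the bundle genuinely descends/extends, i.e.\ that a bundle on $Q_n$ restricting correctly to $Q_3$ is uniquely the Cayley or spinor bundle.

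\emph{The main obstacle} I anticipate is the \emph{uniqueness and extension} part rather than the numerics: it is easy to compute the admissible Chern classes by restriction, but genuinely proving that $\Ff(-2)$ equals a Cayley bundle (and not merely shares its invariants) requires either an explicit cohomological characterization of the Cayley bundle on $Q_5$ — via its defining monad or the homogeneous-bundle description of Ottaviani — or a deformation/rigidity argument showing the moduli point is forced. I would resolve this by appealing to the known characterization of Cayley bundles as the unique stable rank-$2$ bundles with those Chern classes on $Q_5$ and their restriction property to $Q_4$, so that the classification on $Q_3$ (Theorem \ref{mt}) propagates upward uniquely.
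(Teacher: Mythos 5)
Your overall frame --- restrict $\Ff$ to successive hyperplane sections down to a $Q_3$, invoke Theorem \ref{mt} for the restriction, and then ask which bundles on $Q_3$ extend --- is indeed the paper's starting point. But the actual content of the proof is the case-by-case exclusion of each bundle appearing in Theorem \ref{mt}, and this is where your proposal has a genuine gap. Your key assertion that restricting to $Q_3$ ``shows the only indecomposable globally generated candidates with $c_1\le 3$ are spinor-type'' is false: Theorem \ref{mt} also lists the pull-back of a null-correlation bundle ($c_1=2$, $c_2=4$) and the monad bundles with $c_1=3$, $6\le c_2\le 9$, and a priori a bundle on $Q_4$ could restrict to any of these. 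Ruling them out is the bulk of the work, and it requires arguments of a completely different nature, none of which appear in your proposal: for $(c_1,c_2)=(2,4)$ the paper observes that the zero locus of a general section of an extension to $Q_4$ would be two \emph{disjoint} quadric surfaces, which is impossible because every quadric surface in $Q_4$ is a complete intersection of two hyperplane sections of $Q_4$, so any two of them meet; for $(3,7)$ it cites a result on Buchsbaum bundles (\cite{bmvv}, Theorem 4.3); and for $(3,8)$ and $(3,9)$ it runs a cohomology computation (using the zero locus being an anticanonically embedded del Pezzo surface and the cohomology tables (\ref{t1}), (\ref{t2})) to show that any extension $\Ff$ to $Q_4$ would satisfy $H^2_*(\Ff)=0$, contradicting the classification of rank two bundles without intermediate $H^2_*$ in \cite{ma}. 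Only the bundles with $(3,5)$ (spinor twist) and $(3,6)$ (Cayley type) survive to $Q_4$, and that is what makes cases (2) and (3) of the statement correct.

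A second, related problem is your treatment of $n\ge 6$. There is no citable splitting theorem of the kind you invoke (``any rank-$2$ bundle with sufficiently small $c_1$ on $Q_n$, $n\ge 6$, must split''); indeed, proving nonexistence of indecomposable examples in this range \emph{is} the statement, not a tool one can assume. The paper instead obtains case (1) as a consequence of the exclusions above together with two precise extension results of Ottaviani: rank-$2$ spinor bundles exist only on $Q_3$ and $Q_4$ and do not extend further (\cite{O1}, Theorem 2.1), and Cayley bundles live on $Q_5$ (restricting to $Q_4$) but do not extend to $Q_6$ (\cite{Ott}, Theorem 3.2). Your instinct that the identification of the surviving bundles with spinor/Cayley bundles is handled by known characterizations is right --- that part is done by citation in the paper too --- but the ``main obstacle'' you anticipate (uniqueness of the Cayley bundle) is not where the difficulty lies; the difficulty is the non-extension of the four other families, which your proposal does not address.
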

\begin{proof}
We have to study which vector bundle of Theorem \ref{mt} extends to a globally generated vector bundle on $Q_n$:

\quad{(a)} If $\Ee$ is a spinor bundle on $Q_3$, then $\Ee$ extends to a spinor bundle $\Sigma_1$ or $\Sigma_2$ on $Q_4$, but does not extend on $Q_n$ for $n\ge5$ (see \cite{O1}, theorem 2.1).

\quad{(b)} If $\Ee$ is a pullback of a null-correlation bundle on $\mathbb {P}^3$ twisted by $1$, the zero locus of a general section is the disjoint union of two conics.
Let assume that $\Ff$ is a globally generated extension of $\Ee$ on $Q_4$. Then the zero locus of a general section must be the disjoint union of two quadric surfaces. We recall that a quadric surface $S\subset Q_4$ is the complete intersection of two
hyperplane sections of $Q_4$. Hence any two quadric surfaces meet so we get a contradiction.  

\quad{(c)} If $\Ee$ is stable with $c_1=-1$, $c_2=2$, then $\Ee$ extends to a vector bundle on $Q_4$ with Chern classes $c_1=-1$, $c_2=(1,1)$, and even to one on $Q_5$ with $c_1=-1$, $c_2=2$ (Cayley bundles), but no further on $Q_n$, $n\ge6$, (see \cite{Ott}, theorem 3.2).

\quad{(d)} Let $\Ee$ be a rank $2$ globally generated on $Q$  with  $c_1(\Ee)=3$ and $c_2(\Ee)=7$. Then he does not extend on $Q_4$ by  \cite{bmvv} Theorem $4.3$.

\quad{(e)} Let $\Ff$ be a rank $2$ globally generated on $Q_4$ such that the restriction $\Ee=\Ff|_H$ of a general hyperplane $H$  is a globally generated vector bundle of rank $2$ on $Q_3$, with  $c_1(\Ee)=3$ and $c_2(\Ee)=8 $. Therefore the zero locus of a general global section of $\Ff$ is a smooth surface $S$ of degree $8$ and we have the exact sequence
$$0 \to \Oo_{Q_4} \to \Ff \to \Ii_S(3) \to 0.$$
Since $\det(\Ff)\cong\Oo_{Q_4}(3)$ and $\omega_{Q_4}\cong\Oo_{Q_4}(-4)$, the adjunction formula gives $\omega_S\cong \Oo_S(-1)$. Thus $S$ is an anticanonically embedded del Pezzo surface. Using Riemann-Roch on the surface $S$ we obtain $h^0(S,\Oo_S(1))=\chi(\Oo_S(1)) = (\Oo_S(1)\cdot\Oo_S(2))/2 + \chi(\Oo_S) = 8 + 1 = 9$. On the other hand, using the structure sequence $0 \to \Ii_S(1) \to \Oo_{Q_4}(1) \to \Oo_S(1) \to 0$, we have $h^1(\Ii_S(1))=h^1(\Ff((-2)) = 3$. Since $h^1(\Oo_S(1))=0$ we get also $h^2(\Ff(-2))=0.$ Now let us consider the sequence,
$$ 0 \to \Ff(-1) \to \Ff \to \Ee \to 0.$$
From the cohomology table for $\Ee(-2)$ (see (\ref{t1})) we get $h^1(\Ff(t))=0$ for any $t\leq -3$ and from the sequence in cohomology
$$0= H^1(\Ff(-3))\to H^1(\Ff(-2))\to H^1(\Ee(-2))=\CC^{\oplus 3}\to  H^2(\Ff(-3))\to 0,$$ we get also $H^1(\Ff(-3))=0$.
For any $t\geq -3$ we have the sequence $$H^2(\Ff(t))\to H^1(\Ff(t+1))\to 0$$ which implies $H^2(\Ff(t+1))=0$. For any $t\leq -3$ we have the sequence $$0\to H^2(\Ff(t-1))\to H^1(\Ff(t))$$ which implies $H^2(\Ff(t-1))=0$. Then we can conclude that $H^2_*(\Ff)=0$ which a contradiction to the classification of rank two vector bundles without $H^2_*$ given in \cite{ma}.

\quad{(f)} Let $\Ff$ be a rank $2$ globally generated on $Q_4$ such that the restriction $\Ee=\Ff|_H$ of a general hyperplane $H$  is a globally generated vector bundle of rank $2$ on $Q_3$, with  $c_1(\Ee)=3$ and $c_2(\Ee)=9 $.
 Therefore the zero locus of a general global section of $\Ff$ is a smooth surface $S$ of degree $9$ and we have the exact sequence
$$0 \to \Oo_{Q_4} \to \Ff \to \Ii_S(3) \to 0.$$
Since $\det(\Ff)\cong\Oo_{Q_4}(3)$ and $\omega_{Q_4}\cong \Oo_{Q_4}(-4)$, the adjuction formula gives $\omega_S\cong \Oo_S(-1)$. Thus $S$ is an anticanonically embedded del Pezzo surface. Using Riemann-Roch on the surface $S$ we obtain $h^0(S,\Oo_S(1))=\chi(\Oo_S(1)) = (\Oo_S(1)\cdot\Oo_S(2))/2 + \chi(\Oo_S) = 9 + 1 = 10$. On the other hand, using the structure sequence $0 \to \Ii_S(1) \to \Oo_{Q_4}(1) \to \Oo_S(1) \to 0$, we have $h^1(\Ii_S(1))=h^1(\Ff((-2)) = 4$. Since $h^1(\Oo_S(1))=0$ we get also $h^2(\Ff(-2))=0.$ Now let us consider the sequence,
$$ 0 \to \Ff(-1) \to \Ff \to \Ee \to 0.$$ From
the cohomology table for $\Ee(-2)$ (see (\ref{t2})) we get $h^1(\Ff(t))=0$ for any $t\leq -3$ and from the sequence in cohomology
$$0= H^1(\Ff(-3))\to H^1(\Ff(-2))\to H^1(\Ee(-2))=\CC^{\oplus 4} \to  H^2(\Ff(-3))\to 0,$$ we get also $H^1(\Ff(-3))=0$.
For any $t\geq -3$ we have the sequence $$H^2(\Ff(t))\to H^1(\Ff(t+1))\to 0$$ which implies $H^2(\Ff(t+1))=0$. For any $t\leq -3$ we have the sequence $$0\to H^2(\Ff(t-1))\to H^1(\Ff(t))$$ which implies $H^2(\Ff(t-1))=0$. Then we can conclude that $H^2_*(\Ff)=0$ which a contradiction with the classification of rank two vector bundles without $H^2_*$ given in \cite{ma}.
\end{proof}

\providecommand{\bysame}{\leavevmode\hbox to3em{\hrulefill}\thinspace}
\providecommand{\MR}{\relax\ifhmode\unskip\space\fi MR }
\providecommand{\MRhref}[2]{%
  \href{http://www.ams.org/mathscinet-getitem?mr=#1}{#2}
}
\providecommand{\href}[2]{#2}


\begin{thebibliography}{10}

\bibitem{am}
C.~Anghel and N.~Manolache, \emph{Globally generated vector bundles on
  $\mathbb{P}^n$ with $c_1=3$}, Preprint, arXiv:1202.6261 [math.AG], 2012.

\bibitem{acgh}
E.~Arbarello, M.~Cornalba, P.~A. Griffiths, and J.~Harris, \emph{Geometry of
  algebraic curves. {V}ol. {I}}, Grundlehren der Mathematischen Wissenschaften
  [Fundamental Principles of Mathematical Sciences], vol. 267, Springer-Verlag,
  New York, 1985. \MR{770932 (86h:14019)}

\bibitem{bmvv}
Edoardo Ballico, Francesco Malaspina, Paolo Valabrega, and Mario Valenzano,
  \emph{On {B}uchsbaum bundles on quadric hypersurfaces}, Cent. Eur. J. Math.
  \textbf{10} (2012), no.~4, 1361--1379. \MR{2925609}

\bibitem{ce}
L.~Chiodera and P.~Ellia, \emph{Rank two globally generated vector bundles with
  $c_1\leq 5$}, Preprint, arXiv:1111.5725 [math.AG], 2011.

\bibitem{d}
Michel Demazure, Henry~Charles Pinkham, and Bernard Teissier (eds.),
  \emph{S\'eminaire sur les {S}ingularit\'es des {S}urfaces}, Lecture Notes in
  Mathematics, vol. 777, Springer, Berlin, 1980, Held at the Centre de
  Math{\'e}matiques de l'{\'E}cole Polytechnique, Palaiseau, 1976--1977.
  \MR{579026 (82d:14021)}

\bibitem{e}
P.~Ellia, \emph{Chern classes of rank two globally generated vector bundles on
  $\mathbb{P}^2$}, Preprint, arXiv:1111.5718 v2[math.AG], 2011.

\bibitem{faenzi}
D.~Faenzi, \emph{Even and odd instanton bundles on fano threefolds of picard
  number 1}, Preprint, arXiv:1109.3858 [math.AG], 2011.

\bibitem{hh}
R.~Hartshorne and A.~Hirschowitz, \emph{Smoothing algebraic space curves},
  Algebraic geometry, {S}itges ({B}arcelona), 1983, Lecture Notes in Math.,
  vol. 1124, Springer, Berlin, 1985, pp.~98--131. \MR{805332 (87h:14023)}

\bibitem{Hartshorne}
Robin Hartshorne, \emph{Algebraic geometry}, Springer-Verlag, New York, 1977,
  Graduate Texts in Mathematics, No. 52. \MR{0463157 (57 \#3116)}

\bibitem{huh}
Sukmoon Huh, \emph{On triple {V}eronese embeddings of {$\Bbb P^n$} in the
  {G}rassmannians}, Math. Nachr. \textbf{284} (2011), no.~11-12, 1453--1461.
  \MR{2832657 (2012g:14021)}

\bibitem{ma}
Francesco Malaspina, \emph{Monads and vector bundles on quadrics}, Adv. Geom.
  \textbf{9} (2009), no.~1, 137--152. \MR{2493267 (2010g:14063)}

\bibitem{m}
N.~Manolache, \emph{Globally generated vector bundles on $\mathbb{P}^3$ with
  $c_1=3$}, Preprint, arXiv:1202.5988 [math.AG], 2012.

\bibitem{O1}
Giorgio Ottaviani, \emph{Spinor bundles on quadrics}, Trans. Amer. Math. Soc.
  \textbf{307} (1988), no.~1, 301--316. \MR{936818 (89h:14012)}

\bibitem{O}
\bysame, \emph{Some extensions of {H}orrocks criterion to vector bundles on
  {G}rassmannians and quadrics}, Ann. Mat. Pura Appl. (4) \textbf{155} (1989),
  317--341. \MR{1042842 (91f:14018)}

\bibitem{Ott}
\bysame, \emph{On {C}ayley bundles on the five-dimensional quadric}, Boll. Un.
  Mat. Ital. A (7) \textbf{4} (1990), no.~1, 87--100. \MR{1047517 (91c:14055)}

\bibitem{OS}
Giorgio Ottaviani and Micha{\l} Szurek, \emph{On moduli of stable {$2$}-bundles
  with small {C}hern classes on {$Q_3$}}, Ann. Mat. Pura Appl. (4) \textbf{167}
  (1994), 191--241, With an appendix by Nicolae Manolache. \MR{1313556
  (96c:14010)}

\bibitem{sierra}
Jos{\'e}~Carlos Sierra, \emph{A degree bound for globally generated vector
  bundles}, Math. Z. \textbf{262} (2009), no.~3, 517--525. \MR{2506304
  (2010i:14076)}

\bibitem{SU}
Jos{\'e}~Carlos Sierra and Luca Ugaglia, \emph{On globally generated vector
  bundles on projective spaces}, J. Pure Appl. Algebra \textbf{213} (2009),
  no.~11, 2141--2146. \MR{2533312 (2010d:14062)}

\bibitem{SU2}
\bysame, \emph{On globally generated vector bundles on projective spaces ii},
  Preprint, arXiv:1203.0185 [math.AG], 2012.

\bibitem{SW}
Micha{\l} Szurek and Jaros{\l}aw~A. Wi{\'s}niewski, \emph{Fano bundles over
  {${\bf P}^3$} and {$Q_3$}}, Pacific J. Math. \textbf{141} (1990), no.~1,
  197--208. \MR{1028270 (91g:14036)}

\end{thebibliography}
\end{document}